\definecolor{green}{rgb}{0,0.8,0} 
\newtheorem{theorem}{Theorem}[section]
\newtheorem{lemma}[theorem]{Lemma}
\newtheorem{proposition}[theorem]{Proposition}
\theoremstyle{definition}
\theoremstyle{remark}
\newtheorem{remark}[theorem]{Remark}
\numberwithin{equation}{section}
\def\eps{\varepsilon }
\newcommand{\pa}{\partial}
\newcommand{\na}{\nabla}
\newcommand{\Del}{\Delta}
\newcommand{\vep}{\varepsilon}
\newcommand{\lam}{\lambda}
\newcommand{\bbr}{\mathbb R}
\newcommand{\beq}{\begin{equation}}
\newcommand{\eeq}{\end{equation}}
\newcommand{\be}{\begin{equation}}
\newcommand{\ee}{\end{equation}}
\newcommand{\ben}{\begin{equation*}}
\newcommand{\een}{\end{equation*}}
\newcommand{\la}{\label}
 \newcommand{\bmat}{\begin{pmatrix}} 
  \newcommand{\emat}{\end{pmatrix}}
    \newcommand{\tilu}{{\tilde{U}}}
        \newcommand{\tilq}{{\tilde{q}}}
            \newcommand{\tiln}{{\tilde{n}}}
      \newcommand{\s}{\sigma}
      \newcommand{\intr}{\int_\mathbb{R}}
      \newcommand{\el}{{\frac{\varepsilon}{\lambda}}}
            \newcommand{\eel}{{\frac{\varepsilon^2}{\lambda}}}
\newcommand{\intz}{{\int_0^1}}
\begin{document}
\bibliographystyle{plain}

\title{Contraction  for large perturbations of \\ traveling waves  in a hyperbolic-parabolic system\\ arising from a chemotaxis model}

\subjclass[2010]{92B05, 	35L65}%
\keywords{tumor angiogenesis; Keller-Segel; stability; contraction; traveling wave; viscous shock; relative entropy method; conservations laws}

\date{\today}%

\author[Choi]{Kyudong Choi}
\address[Kyudong Choi]{\newline Department of Mathematical Sciences, \newline Ulsan National Institute of Science and Technology, Ulsan 44919, Republic of Korea}
\email{kchoi@unist.ac.kr}

\author[Kang]{Moon-Jin Kang}
\address[Moon-Jin Kang]{\newline Department of Mathematics \& Research Institute of Natural Sciences, \newline Sookmyung Women's University, Seoul 04310,  Republic of Korea}
\email{moonjinkang@sookmyung.ac.kr}

\author[Kwon]{Young-Sam Kwon}
\address[Young-Sam Kwon]{\newline Department of Mathematics, \newline
Dong-A University,
Busan 49315,  Republic of Korea}
\email{ykwon@dau.ac.kr}

\author[Vasseur]{Alexis F. Vasseur}
\address[Alexis F. Vasseur]{\newline Department of Mathematics, \newline The University of Texas at Austin, Austin, TX 78712, USA}
\email{vasseur@math.utexas.edu}

\thanks{\textbf{Acknowledgement.}  The work of KC was partially supported by NRF-2018R1D1A1B07043065 and by the POSCO Science Fellowship of POSCO TJ Park Foundation. 
The work of MK was partially supported by NRF-2017R1C1B5076510. The work of YK was supported by Basic Science Research Program through the Na-
tional Research Foundation of Korea (NRF) funded by the Ministry of Education (NRF-2017R1D1A1B03030249).
The work of AV was partially supported by the NSF grant: DMS 1614918. }

\begin{abstract}
 We consider a hyperbolic-parabolic system arising from a chemotaxis model in angiogenesis, which is described by a Keller-Segel equation with singular sensitivity. It is known to allow viscous shocks (so-called traveling waves). We introduce a relative entropy of the system, which can capture how close a solution at a given time is to a given shock wave in  almost  $L^2$-sense. When the shock strength is small enough, we show the functional is non-increasing in time for any large initial perturbation.  The contraction property holds independently of the strength of the diffusion.
\end{abstract}

\maketitle

\section{Introduction and main theorem}

We consider the following one dimensional hyperbolic-parabolic system: 
\begin{align}\label{nq_nu} \begin{aligned}
\pa_t n -\pa_x(n q)& =\nu  \pa_{xx} n,\\
\pa_t q -\pa_xn & = 0 \quad \mbox{ for } x\in\mathbb{R} \quad\mbox{and for }t>0 
\end{aligned} \end{align}  where $\nu>0$ is a positive constant. 
We are interested in stability of viscous shocks (so-called traveling waves) of the above system.  
 
\subsection{Model from Chemotaxis}
 
The system \eqref{nq_nu} is related to  the following Keller-Segel system \cite{KSb}: 
 \begin{align}\label{KS} \begin{aligned}
\pa_t n - \nu\Del n& = - \na \cdot (n \chi(c) \na c ), \\
\pa_t c - \epsilon\Del c & = - c^m n  \quad \mbox{ for } \bold{x}\in\mathbb{R}^N \quad\mbox{and for }t>0 
\end{aligned} \end{align} with $m >0$ and $\epsilon\geq 0$. 
 In chemotaxis, the unknown  $n(\bold{x}, t)> 0 $ represents the bacterial density while
the unknown  $c(\bold{x}, t)> 0$ means the concentration of chemical nutrient consumed by bacteria at position $\bold{x},$ and time $t$.
We  assume that the given sensitivity function  $\chi(\cdot): \bbr^+ \to  \bbr^+ $ is  decreasing since the chemosensitivity gets usually lower as the  concentration of the chemical gets higher.  The positive constant
$m$ indicates  the consumption rate of nutrient $c$, and the non-negative constant $\epsilon\geq 0$ means  the chemical diffusion rate  for     $c$. \\

Such a Keller-Segel system can play a role of  a simplified model of    angiogenesis on
the formation of new blood vessels from pre-existing vessels, which is considered to be  the mechanism for tumor progression and metastasis  (see \cite{FoFrHu, FrTe,Le,Pe,  Rosen1, Sh}, and references therein).
In this interpretation, $n$ denotes the density of endothelial cells while $c$ does the concentration of  the protein known as the vascular endothelial  growth factor(VEGF). In biological implications, we usually  consider $\epsilon$ small (or negligible) (\textit{e.g.} see \cite{Le}).  \\ 

To derive our system \eqref{nq_nu}, we just take    $\chi(c)= c^{-1}$ and $m =1$,  $N=1$, 
 and $\epsilon=0$ into \eqref{KS} to get
\begin{align}\label{eq:main}
 \begin{aligned}
\pa_t n - \nu \pa_{xx} n& = - \pa_x \left(n \frac{ \pa_x c}{c} \right), \\
\pa_t c  & = - cn. 
\end{aligned} \end{align}
To have a traveling wave of \eqref{eq:main},  the chemosensitivity function $\chi( c)$  needs to be singular near  $c=0$  (\textit{e.g.} see \cite{KSb}). In particular, $\chi(c)= c^{-1}$ was assumed in \cite{KSb}. 
  Thanks to the 
    restriction   $m =1$, we can treat the singularity   in $c$ of the sensitivity by the Cole-Hopf transformation
\begin{equation*}
 q := - \pa_x [\ln c ]= -\frac{\pa_x c}{c}.
\end{equation*}   After the transform, we have 
\eqref{nq_nu} as in \cite{WaHi}.  

\subsection{Traveling waves of  \eqref{nq_nu}}
 We notice that if $n\geq0$, which is biologically relevant by the derivation from chemotaxis, then the principal part (\textit{i.e.} when $\nu=0$) of the system \eqref{nq_nu} is hyperbolic. By \cite{WaHi} (also see \cite{LiWa_siam}), it has been known that for any $\nu>0$,  \eqref{nq_nu} admits a smooth traveling wave $\bmat{\tiln}\\ {\tilq}\emat (x-\sigma t)$ connecting two end-states 
$(n_-,q_-)$ and $(n_+,q_+)$, \textit{i.e.},
\begin{equation}\label{bdry_cond}  {\tiln}(-\infty)=n_->0, \, \,\, \,{\tiln}(+\infty)=n_+>0,\, \,\,\,{\tilq}(-\infty)=q_-,\,\,\,\, {\tilq}(+\infty)=q_+
\end{equation} (we denote $\displaystyle \lim_{x\to \pm\infty} {f}(x)$ by  ${f}(\pm\infty)$ in short),
 provided the two end-states satisfy the Rankine-Hugoniot condition and the Lax entropy condition: 
\begin{align}
\begin{aligned}\label{end-con} 
&\exists~\sigma\in\bbr \mbox{ such that }~\left\{ \begin{array}{ll}
       -\sigma (n_+-n_-) - (n_+q_+ - n_-q_-) =0,\\
       -\sigma (q_+ -q_-) -(n_+-n_-)=0, \end{array} \right. \\
&\mbox{and either $n_->n_+$ and $q_-<q_+$ or  $n_-<n_+$ and $q_-<q_+$ holds.}        
\end{aligned}
\end{align} 
Here, the velocity $\sigma$ is given by
\be\la{sigma_eq}\sigma=\frac{-q_-\pm\sqrt{q_-^2+4n_+}}{2}.\ee 
More precisely, if $n_->n_+>0$, then $\sigma=\frac{-q_-+\sqrt{q_-^2+4n_+}}{2}>0$, whereas if $0<n_-<n_+$, then $\sigma=\frac{-q_--\sqrt{q_-^2+4n_+}}{2}<0$
 (See Subsection \ref{existence_traveling} for more details).  For this topic, we also refer  to the    survey paper \cite{Wa_survey} by Wang.\\  

In this parabolic conservation laws, it is an interesting topic to discuss how stable these viscous shocks are. By \cite{LiWa_siam}, it has been known that these waves are stable   if 
the anti-derivative of a perturbation $(n-\tiln, q-\tilq) $ is small in  the Sobolev space $[H^2(\bbr)]^2$. Thus the perturbation needs at least to have  the mean-zero condition: 
\ben
\exists x_0\in\bbr \mbox{ such that }\int_\bbr \bmat{ n_0(x)-\tiln(x-x_0)}\\ { q_0(x)-\tilq(x-x_0)}\emat  dx = \bmat{0}\\ {0}\emat. 
\een This condition is quite common in studying stability of viscous shocks since     \cite{Go} and \cite{KaMa}.\\

 In this paper, we introduce a relative entropy functional of the system, which plays a similar role of $L^2$-distance between a solution $(n,q)$ and a given shock profile $(\tiln, \tilq)$. Then  we show that the functional is non-increasing in time for any large initial perturbation.  Therefore, we prove that the contraction property holds independently of the size of the perturbation or the strength of the viscosity $\nu$. 
 It is remarkable that our result do not ask a perturbation to have either the mean-zero condition  or the smallness  in a Sobolev space. However,  we need that the shock strength $|n_--n_+|$ is small enough while this smallness on the wave amplitude was not required in \cite{LiWa_siam}. \\

   For the Cauchy problem of \eqref{nq_nu}, we refer to
\cite{GXZZ, LPZ, MWZ}  for global well-posedness. For multi-dimentional cases, see \cite{LLZ} and references therein. For stability of planar shocks under the mean-zero condition, we refer to 
\cite{CC_arxiv, CCKL}.

\begin{subsection}{Main result}\ \\

For $U_i=\begin{pmatrix} n_i   \\ q_i  \end{pmatrix}
$  with $n_i>0$ for $i=1,2$, we consider the relative entropy 
$$\eta(U_1 | U_2):= \frac{|q_1-q_2|^2}{2}+{\Pi}(n_1 | n_2),$$
 where
$$ {\Pi}(n_1 | n_2):= {\Pi}(n_1)-{\Pi}(n_2)-\nabla{\Pi}(n_2)(n_1-n_2),\qquad \Pi(n):=n\log n -n.$$
Since $\Pi(n)$ is strictly convex in $n$, its relative functional ${\Pi}(\cdot | \cdot)$ above is positive definite, and so is $\eta(\cdot | \cdot)$. That is,
  $\eta(U_1 | U_2)\geq 0$ for any $U_1$ and $U_2$, and 
$\eta(U_1 | U_2)= 0$ if and only if $U_1=U_2$.\\ 

Global existence and uniqueness of weak solutions to \eqref{nq_nu}   belonging to the   space 
\begin{align*}
\begin{aligned}\label{sp-T}
\mathcal{X}_T := \{ (n,q)\in L^\infty ((0,T)\times\bbr)^2~|~ n>0, ~  n^{-1}\in L^\infty((0,T)\times \bbr),~\partial_x n \in L^2((0,T)\times \bbr) \} 
\end{aligned}
\end{align*} for each $T>0$, is studied in \cite{CKV_in_preparation}. \\ 
 
 Here is the main result. We first state it for a fixed viscosity $\nu=1$. Then, in Remark \ref{rem_imp}, we illustrate that the main result still holds for any $\nu>0$.
 
\begin{theorem}\label{main_thm}
Let $\nu=1$. For a given constant state $(n_-,q_-)\in\bbr^+\times\bbr$, 
there exist  constants  $\delta_0\in(0,1/2)$ and $C>0$ such that the following is true:\\
For any $\eps,\lambda>0$ with $\eps\in(0,n_-)$ and $\delta_0^{-1}\eps<\lambda<\delta_0$, and for any $(n_+,q_+)\in\bbr^+\times\bbr$ satisfying \eqref{end-con} with $|n_--n_+|=\eps$, there exists  a smooth monotone function $a:\bbr\to\bbr^+$ with $\lim_{x\to\pm\infty} a(x)=1+a_{\pm}$ for some  constants $a_-, a_+$ with $|a_+-a_-|=\lambda$ such that the following holds:\\
Let $\tilu:=\bmat\tiln \\ \tilq\emat$ be a  traveling wave of \eqref{nq_nu} with the boundary condition \eqref{bdry_cond} and  with the speed $\s$ from \eqref{sigma_eq}.
For a given $T>0$, let $U(t,x):=\bmat n(t,x)\\ q(t,x) \emat$ be a solution to \eqref{nq_nu}  belonging to $\mathcal{X}_T$ with initial data $U_0(x):=\bmat n_0(x) \\ q_0(x) \emat $ satisfying \be
\label{initial_entropy} \int_{-\infty}^{\infty} \eta(U_0| \tilu) dx<\infty.\ee Then there exists an absolutely continuous shift function $X:[0,T]\rightarrow \mathbb{R}$ with 
$X\in W^{1,1}_{loc}$ and 
$X(0)=0$ such that   
\begin{align}
\begin{aligned}\la{ineq_contraction}
& \int_{-\infty}^{\infty} a(x-\s t) \eta\big(U(t,x-X(t))| \tilu(x-\s t)\big) dx \\
&\qquad + \delta_0\int_{0}^{t} \int_{-\infty}^{\infty} a(x-\s \tau) n\big(\tau,x-X(\tau)\big)\Big| \pa_x \Big(\log\frac{n(\tau,x-X(\tau))}{\tiln(x-\s \tau)}\Big)\Big|^2 dxd\tau\\
&\quad  \le \int_{-\infty}^{\infty} a(x) \eta\big(U_0(x)| \tilu(x)\big) dx,      
\end{aligned}
\end{align} 
and
\be\begin{split}\la{est_shift}
&|\dot X(t)-\sigma|\le \frac{1}{\vep^2}\Big(f(t) + C\int_{-\infty}^{\infty} \eta(U_0| \tilu) dx +1  \Big) \quad \mbox{ for \textit{a.e.} }t\in[0,T]\\
&\mbox{where $f$ is  some positive function  satisfying}\quad\|f\|_{L^1(0,T)} \le C\frac{\lambda}{\eps}\int_{-\infty}^{\infty} \eta(U_0| \tilu) dx.
\end{split}\ee 
 \end{theorem}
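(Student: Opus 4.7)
The plan is to apply the weighted relative entropy method with a Lagrangian shift, in the spirit of Kang--Vasseur, adapted to the hyperbolic-parabolic structure of \eqref{nq_nu}. The key difficulty is that the $q$-equation has no dissipation, so the coercive $|\pa_x\log(n/\tiln)|^2$ term in \eqref{ineq_contraction} only comes from the $n$-equation; this forces the use of a nontrivial weight $a$ tailored to the shock in order to generate ``weight dissipation'' that compensates for the missing $q$-dissipation, while the shift $X$ is used to absorb the leading-order linear term that would otherwise obstruct monotonicity. The absence of any smallness assumption on $U-\tilu$ means one cannot linearize; the heart of the argument must be a pointwise-in-time nonlinear estimate.

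First I would construct $a:\bbr\to\bbr^+$ as a smooth monotone function whose derivative $a'$ is essentially a multiple of $\pm\tiln'$, with $\int_\bbr a'\,dx\simeq\lam$ and $\|a'\|_{L^\infty}\lesssim\lam/\vep$, and whose sign is chosen so that $-a'\tiln'\ge 0$ pointwise (this is the source of the main good term). Next I would define the shift by an ODE
\[
\dot X(t)=\sigma-\frac{1}{\vep^2}\,Y\!\big(U(t,\cdot-X(t))\big),\qquad X(0)=0,
\]
where $Y$ is the linear part, in $U-\tilu$, of the derivative of $\int a\,\eta\,dx$ with respect to translation of $U$. This choice kills, by construction, the $O(\vep)$ component of $\ddt\!\int a\,\eta\,dx$ that would otherwise be uncontrollable, and the prefactor $1/\vep^2$ is exactly what is needed so that the orthogonal remainder is coercive; it also produces the shift bound \eqref{est_shift} after estimating $Y$ by Cauchy--Schwarz against $\eta$.

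Next I would compute $\ddt\int a(x-\sigma t)\,\eta\!\big(U(t,x-X(t))\,|\,\tilu(x-\sigma t)\big)\,dx$ using \eqref{nq_nu} and the traveling wave ODE satisfied by $\tilu$. After integration by parts and the Cole--Hopf-type manipulation that produces $\Pi(n|\tiln)$, the derivative splits as
\[
\ddt\!\int a\,\eta\,dx \;=\; \big(\dot X-\sigma\big)\cdot\calY(U)\;+\;\calB(U)\;-\;\calG(U),
\]
where $\calG(U)\geq 0$ collects the good terms---the parabolic dissipation $a\,n\,|\pa_x\log(n/\tiln)|^2$ appearing in \eqref{ineq_contraction}, together with two weight-induced terms built from $-a'\tiln'|q-\tilq|^2$ and $-a'\tiln'\,\Pi(n|\tiln)$---and $\calB(U)$ is a remainder that is at worst quadratic in $U-\tilu$ but linear in the small parameters $\vep,\lam$. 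With the shift defined as above, $(\dot X-\sigma)\,\calY(U)+\calB(U)$ reduces to a purely quadratic form in $U-\tilu$ of size $O(\vep,\lam)$.

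The main obstacle, and the technical core of the argument, is the pointwise-in-time nonlinear inequality $\calB(U)\le\calG(U)$ for \emph{arbitrarily large} perturbations. To handle this without smallness, I would split $\bbr$ into a near-shock region where $|n-\tiln|\ll\tiln$---there I would use sharp weighted Poincar\'e-type inequalities against $a'$ together with the expansion $\Pi(n|\tiln)\simeq (n-\tiln)^2/(2\tiln)$---and a far region where the superquadratic growth of $\Pi(n|\tiln)$, coming from the strict convexity of $n\log n$, dominates any quadratic bad term by $\eta$ itself times an arbitrarily small constant. The scaling $\delta_0^{-1}\vep<\lam<\delta_0$ is used precisely to make all crossed error terms (of order $\vep/\lam$, $\lam$, or $\lam^2/\vep$) absorbable in $\calG$, and is the reason $\lam$ is taken much larger than $\vep$ while still remaining small. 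Once this pointwise inequality is established, integrating in time yields \eqref{ineq_contraction}; the estimate \eqref{est_shift} follows by bounding $Y$ via Cauchy--Schwarz against the initial relative entropy and the $L^1_t$ norm of the ``good'' dissipation term in $\calB$. Finally, the general case $\nu>0$ is reduced to $\nu=1$ by the parabolic rescaling $(t,x)\mapsto(\nu t,\nu x)$, as announced in Remark~\ref{rem_imp}.
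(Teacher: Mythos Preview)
Your overall framework---weight $a$ tied to $\tiln$, the relative-entropy identity producing $\dot X\,\mathcal{Y}+\mathcal{B}-\mathcal{G}$, and a near/far splitting---matches the paper. But two concrete steps in your proposal would not go through as written.

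First, the shift. You propose $\dot X=\sigma-\eps^{-2}Y$ with $Y$ the \emph{linear} part of $\mathcal{Y}$, arguing this ``kills the $O(\eps)$ component''. The paper instead sets $\dot X=\Phi_\eps(\mathcal{Y}(U^X))\,(2|\mathcal{I}^{bad}(U^X)|+1)$ with $\Phi_\eps$ a saturated odd function (see \eqref{def_Phi}--\eqref{def_shift}), so that when $|\mathcal{Y}|\ge\eps^2$ the product $\dot X\,\mathcal{Y}\le -2|\mathcal{I}^{bad}|$ absorbs \emph{all} bad terms algebraically, while when $|\mathcal{Y}|\le\eps^2$ one has $\dot X\,\mathcal{Y}\le-\eps^{-4}\mathcal{Y}^2$. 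Your linear choice yields neither regime: $(\dot X-\sigma)\mathcal{Y}=-\eps^{-2}Y\mathcal{Y}$ is not sign-definite and does not reduce the problem to a clean pointwise inequality in $U$.

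Second, and more fundamentally, the inequality you target---$\mathcal{B}(U)\le\mathcal{G}(U)$ for all $U$---is \emph{false}. After the paper's maximization in $q-\tilq$ (Lemma~\ref{lem_max}, which you omit; this is precisely how one copes with the missing $q$-dissipation and reduces to a scalar problem in $n$), the near-shock inequality in the rescaled variable $W$ reads, to leading order,
\[
\int_0^1 W^2\,dy+\tfrac{2}{3}\int_0^1 W^3\,dy-\int_0^1 y(1-y)|\partial_y W|^2\,dy\;\le\;0,
\]
which already fails for any constant $W>0$. What rescues it is the extra term $-\delta^{-1}\big(\int_0^1 W^2+2\int_0^1 W\big)^2$ contributed by $-\eps^{-4}\mathcal{Y}^2$ under the constraint $|\mathcal{Y}|\le\eps^2$; this is the \emph{nonlinear} Poincar\'e inequality of Kang--Vasseur (Lemma~\ref{lem_poincare}), not a standard weighted Poincar\'e. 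Likewise, in the far region $|n/\tiln-1|>\delta_1$ the bad terms are not dominated by the superquadratic growth of $\Pi$; they are controlled by the parabolic dissipation $\mathcal{D}(U)$ through the pointwise bound $|n-\tiln|\le C(\eps^{-1}+|\xi|)\,\mathcal{D}(U)$ of Lemma~\ref{lem_pointwise}, which in turn relies on the constraint $|\mathcal{Y}|\le\eps^2$ (via \eqref{eta_small_}) to locate a reference point where $n\approx\tiln$. Without the two-regime shift and the $-\eps^{-4}\mathcal{Y}^2$ term feeding the nonlinear Poincar\'e inequality, the core estimate does not close.
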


\begin{remark} The result can be considered to be an {\it a-priori} estimate for solutions of \eqref{nq_nu_1}. The existence issue of   solutions in the class $\mathcal{X}_T$ for any $T>0$ with the initial condition \eqref{initial_entropy} will be covered in the forthcoming paper \cite{CKV_in_preparation}.
The estimate on the dissipation in \eqref{ineq_contraction}, will be crucially used for the proof of the global existence of weak solutions to \eqref{nq_nu} in \cite{CKV_in_preparation}. 
\end{remark}

\begin{remark} 
Notice that it is enough to prove Theorem \ref{main_thm} in the case of $n_->n_+>0$. Indeed, the result for $n_+>n_->0$ is obtained by the change of variables $x\mapsto -x$ with $\s\mapsto -\s$. 
Therefore, from now on, we  assume $n_->n_+>0$ and thus 
\[
0<\sigma=\frac{-q_- +\sqrt{q_-^2+4n_+}}{2}.
\]
\end{remark}

\begin{remark}
Since the weight function $a$ satisfies that $|a(x)-1|\leq\lambda<\delta_0<1/2$ for all $x\in\bbr$, the contraction estimate \eqref{ineq_contraction} yields
\[
 \int_{-\infty}^{\infty} \eta\big(U(t,x-X(t))| \tilu(x-\s t)\big) dx   \le 4 \int_{-\infty}^{\infty} \eta\big(U_0(x)| \tilu(x)\big) dx.    
\]
\end{remark}

\begin{remark} \label{rem_imp}
In fact, such a contraction property \eqref{ineq_contraction} holds for any $\nu>0$, by scaling as follows. This scaling argument makes sense because of no condition on the strength of the initial perturbation.
Let $U^{\nu}$ and $\tilu^\nu$ be a solution and traveling wave to \eqref{nq_nu} with initial data $U_0$, respectively. Then, $U(t,x):=U^{\nu}(\nu t,\nu x)$ (resp. $\tilu(x):=\tilu^\nu(\nu x)$) is a solution (resp. traveling wave) to \eqref{nq_nu} with $\nu=1$. 
Therefore, using \eqref{ineq_contraction} together with the fact that
\begin{align*}
\begin{aligned}
&\int a^\nu(x-\s t) \eta\big(U^\nu (t,x-X^\nu (t))| \tilu^\nu(x-\s t)\big) dx \\
&\qquad\quad =\nu \int a(x-\s t/\nu) \eta\big(U (t/\nu ,x-X (t/\nu))| \tilu(x-\s t/\nu)\big) dx,
\end{aligned}
\end{align*} 
where $a^\nu(x):=a(x/\nu)$ and $X^\nu(t):=\nu X(t/\nu)$, we get
\[
\int a^\nu(x-\s t) \eta\big(U^\nu (t,x-X^\nu (t))| \tilu^\nu(x-\s t)\big) dx \le \int a^\nu(x) \eta\big(U_0 (x)| \tilu^\nu(x)\big) dx.
\]
 \end{remark}

 \end{subsection} 
 
\vspace{0.5cm}

{\bf Notations }
Throughout the paper, $C$ denotes a positive constant which may change from line to line, but which is independent of $\eps$ (the strength of the shock) and $\lambda$ (the total variation of the function $a$). 
The paper will consider two smallness conditions, one on $\eps$, and the other on $\eps/\lambda$. In the argument, $\eps$ will be far smaller than $\eps/\lambda$ .\\

\subsection{Ideas of Proof}\label{sub_idea}\

We basically take advantage of the new method introduced by Kang-Vasseur in
 \cite{KV_arxiv}, which is also used in the recent works \cite{Kang19, KV-unique19}. The main scenario of the method is briefly explained as follows.   \ \\
 
For a given viscous traveling wave $\tilu$ with small amplitude $|n_--n_+|=\eps$, the weight function $a$ is defined by $\tilu$ (see \eqref{def_a}). 
We employ the weighted relative entropy with the weight $a$, to get the contraction of any large perturbation $U$ from $\tilu$, up to a time-dependent shift $X(t)$. The shift function $X$ is constructed after the relative entropy computation in Lemma \ref{lem_relative}, which gives
\begin{eqnarray*}
&& \frac{d}{dt}\int_{-\infty}^{\infty} a(\xi) \eta\big(U(t,\xi+X(t))| \tilde U (\xi)\big) d\xi\\
 &&\qquad=\dot X(t) \mathcal{Y}(U(t,\cdot+X(t))) +\mathcal{I}^{bad}(U(t,\cdot+X(t)))- \mathcal{I}^{good}(U(t,\cdot+X(t))).
\end{eqnarray*}
Because of the relative entropy structure, the bad terms $\mathcal{I}^{bad}$ and the good terms $\mathcal{I}^{good}$ (i.e. $\mathcal{I}^{good}\ge 0$) are quadratic when the perturbation is small.  However, we have no uniform control on the size of the large perturbation $U(t,\cdot)$, therefore we should carefully estimate what happens for large values of $U(t,x)$. \\

The key idea of the technique is to exploit the degree of freedom of the shift $X(t)$ in the first term $\dot X(t) \mathcal{Y}(U(t,\cdot+X(t)))$. First of all, when $ \mathcal{Y}(U(t,\cdot))$ is not too small, we can construct  the shift $X(t)$ such that the term $\dot X(t) \mathcal{Y}(U(t,\cdot+X(t)))$ absorbs all the bad terms $\mathcal{I}^{bad}$  (see \eqref{def_shift}). Specifically, we ensure algebraically that the contraction holds as long as $|\mathcal{Y}(U(t))|\geq\eps^2$. Thus, the rest of the method is to show that the contraction still holds when $|\mathcal{Y}(U(t))|\leq \eps^2$. \\

In the argument, for the values of $t$ such that $|\mathcal{Y}(U(t))|\leq\eps^2$,  we construct the shift as a solution to the ODE: $\dot X(t)=-\mathcal{Y}(U(t,\cdot+X(t)))/\eps^4$. 
 From this point, we forget that $U$ is a solution to the system and $X(t)$ is the shift. That is, we leave out $X(t)$ and the $t$-variable of $U$. Therefore, it remains to show that for any function $U$ satisfying $\mathcal{Y}(U)\leq \eps^2$,
 \begin{equation*}\label{but}
 -\frac{1}{\eps^4}\mathcal{Y}^2(U) +\mathcal{I}^{bad}(U)- \mathcal{I}^{good}(U) \leq0.
 \end{equation*}
This is proved by Proposition \ref{main_prop} together with Lemma \ref{lem_max}.  This completes the proof of Theorem \ref{main_thm}. 
Proposition \ref{main_prop} is obtained thanks to a generic non-linear Poincar\'e type inequality (see Lemma \ref{lem_poincare}), which is first introduced in \cite{KV_arxiv}. It was first discovered for the scalar case in \cite{Kang-V-1}. The general method then follows \cite{KV_arxiv} by performing a careful expansion on the strength of the shock. Note that the parabolic system \eqref{nq_nu} is degenerate (that is, there is no diffusion in terms of $q$). Therefore, following \cite{KV_arxiv}, we first maximize the bad terms with respect to $q$ for $n$ fixed (see Lemma \ref{lem_max}). The expansion is then performed only on $n$. A new feature compared to \cite{KV_arxiv} is that the maximization can be performed only locally for $|n-\tiln|\ll 1$.\\

The remaining parts of the paper are organized as follows. In Section \ref{backg},
we introduce background materials including some properties of traveling waves, the definition of  the weight function $a(\cdot)$, and the main inequality (Lemma \ref{lem_relative})  from the relative entropy. Then in Section \ref{sec_3}, we give the definition of our shift $X$ and present the main proposition (Proposition \ref{main_prop}), which implies our main result (Theorem \ref{main_thm}). The proof of Proposition \ref{main_prop} is presented in Sections \ref{sec_near} and \ref{proof_main_prop}. In Section \ref{sec_near}, we get sharp estimates when $|n-\tiln|$ is small enough while in Section \ref{proof_main_prop}, we control all bad terms when  $|n-\tiln|$ is not small.

 \section{Background}\label{backg}
 \subsection{Moving frame}\la{moving_frame}
 From now on, we fix $\nu=1$ so our system \eqref{nq_nu} becomes
 \begin{align}\label{nq_nu_1} \begin{aligned}
\pa_t n -\pa_x(n q)& =  \pa_{xx} n, \\
\pa_t q -\pa_xn & = 0.
\end{aligned} \end{align}

For simplification of our analysis, we rewrite \eqref{nq_nu_1} into the following system, based on the change of variables $(t,x)\mapsto (t, \xi=x-\sigma t)$, where $\sigma=\frac{-q_- +\sqrt{q_-^2+4n_+}}{2}$:
 \begin{align}\label{nq} \begin{aligned}
\pa_t n -\sigma\pa_\xi n-\pa_\xi(n q)& =  \pa_{\xi\xi} n, \\
\pa_t q -\sigma\pa_\xi q -\pa_\xi n & = 0.
\end{aligned} \end{align} 
We are interested in a traveling wave solution $\tilu=\bmat\tiln \\ \tilq\emat$ of \eqref{nq_nu_1} as a solution of 
\begin{align}\label{NQ} \begin{aligned}
 -\sigma\pa_\xi {\tiln}-\pa_\xi({\tiln} {\tilq})& =  \pa_{\xi\xi} {\tiln}, \\
 -\sigma\pa_\xi {\tilq} -\pa_\xi{\tiln} & = 0.
\end{aligned} \end{align} 
\subsection{Existence and properties of traveling wave solutions}\label{existence_traveling}
 \ \\
In the sequel, without loss of generality, we consider the traveling wave $(\tilde n,\tilde q)$ satisfying $\tilde n(0)=\frac{n_-+n_+}{2}$. 
 \begin{lemma}\la{til_prop}
(1) For any $n_\pm, q_\pm$ with $n_->n_+>0$ satisfying \eqref{end-con}, the system \eqref{nq_nu_1} admits a smooth traveling wave $\bmat {\tiln} \\ {\tilq}\emat (x-\sigma t)$ 
connecting the two end-states $(n_-,q_-)$ and $(n_+,q_+)$ as \eqref{bdry_cond} with velocity
\be\la{sigma_e}\sigma=\frac{-q_-+\sqrt{q_-^2+4n_+}}{2}>0.\ee
Moreover,
\be\la{prop_til}\begin{split}
&{\tiln}'<0,\quad {\tilq}'=-\frac{{\tiln}'}{\sigma}>0, \mbox{ and}\\
&{\tiln}'=\frac{({\tiln}-n_-)({\tiln}-n_+)}{\sigma}.\\
\end{split}\ee 
(2) For any $(n_-, q_-)\in\bbr^+\times\bbr$, 
 there exist positive constants $\eps_1$ and $C$ such that for any $0<\eps<\eps_1$ and any $(n_+,q_+)\in\bbr^+\times\bbr$ satisfying \eqref{end-con} with $n_+=n_--\eps$, the following is true: \\
Let $\bmat {\tiln} \\ {\tilq}\emat (x-\sigma t)$  be the traveling wave connecting the two end states $(n_-,q_-)$ and $(n_+,q_+)$ such that ${\tiln}(0)=(n_-+n_+)/2$.\\
Then,
\be\la{tilde_estimate}\begin{split}
-\frac{\vep^2}{\s_-}e^{-\frac{\vep|\xi|}{\s_-}} \leq \tiln'(\xi)\leq - \frac{\vep^2}{4\s_-}e^{-\frac{\vep|\xi|}{\s_-}},
\end{split}\ee
where
\be\la{sigma_eq_-}\sigma_- :=\frac{-q_-+\sqrt{q_-^2+4n_-}}{2}.\ee
Moreover, we have 
\be\la{s_estimate}0<\frac{\s_-}{2}\leq(\s_--C\vep)\leq\s<\s_-, \ee
and 
\ben\label{sec-n}
|\tiln''(\xi)|\leq C\eps |\tiln'(\xi)|.
\een
\end{lemma}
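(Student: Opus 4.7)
The plan is first to reduce the traveling-wave system \eqref{NQ} to a single scalar ODE for $\tilde n$. Integrating the second equation of \eqref{NQ} and matching the boundary value at $\xi=-\infty$ gives the linear relation $\sigma\tilde q+\tilde n=\sigma q_-+n_-$, so $\tilde q'=-\tilde n'/\sigma$. Substituting this expression for $\tilde q$ into the first equation of \eqref{NQ} and integrating once yields, after simplification,
$$\sigma\tilde n'=(\tilde n-n_-)\bigl(\tilde n-\sigma^2-\sigma q_-\bigr).$$
Squaring \eqref{sigma_e} produces the algebraic identity $\sigma^2+\sigma q_-=n_+$, which collapses the above to the logistic ODE $\sigma\tilde n'=(\tilde n-n_-)(\tilde n-n_+)$, i.e.\ \eqref{prop_til}. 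Since $n_+<\tilde n<n_-$, the right-hand side is negative, so $\tilde n'<0$ and $\tilde q'=-\tilde n'/\sigma>0$. Existence and uniqueness (up to translation) of a smooth monotone heteroclinic connecting $n_-$ at $-\infty$ to $n_+$ at $+\infty$ then follows from the classical phase-plane analysis for autonomous scalar ODEs with two hyperbolic equilibria.

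For Part (2), I would exploit the closed-form solution. With the normalization $\tilde n(0)=(n_-+n_+)/2$, separation of variables in the logistic ODE produces
$$\tilde n(\xi)-n_+=\frac{\varepsilon}{1+e^{\varepsilon\xi/\sigma}},\qquad -\tilde n'(\xi)=\frac{\varepsilon^2}{4\sigma}\,\mathrm{sech}^2\!\left(\frac{\varepsilon\xi}{2\sigma}\right),$$
and the elementary envelope $e^{-2|x|}\leq\mathrm{sech}^2(x)\leq 4e^{-2|x|}$ yields sharp two-sided exponential bounds for $-\tilde n'$ with natural decay rate $\varepsilon/\sigma$. To pass to the target rate $\varepsilon/\sigma_-$, I compare $\sigma$ with $\sigma_-$: subtracting the two identities $\sigma^2+\sigma q_-=n_+$ and $\sigma_-^2+\sigma_- q_-=n_-$ gives $(\sigma_- -\sigma)(\sigma+\sigma_-+q_-)=\varepsilon$, and since $\sqrt{q_-^2+4n_-}>|q_-|$ forces $\sigma_-+q_->0$, the factor $\sigma+\sigma_-+q_-$ is bounded below by a positive constant depending only on $(n_-,q_-)$. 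Hence $0<\sigma_- -\sigma\le C\varepsilon$. Choosing $\varepsilon_1$ small enough that $C\varepsilon_1\le \sigma_-/2$ yields \eqref{s_estimate}, and shrinking $\varepsilon_1$ further lets the slack in the constants $1/(4\sigma_-)$ and $1/\sigma_-$ absorb the $O(\varepsilon/\sigma_-)$ discrepancy between $e^{-\varepsilon|\xi|/\sigma}$ and $e^{-\varepsilon|\xi|/\sigma_-}$, producing \eqref{tilde_estimate}. The second-derivative bound is immediate: differentiating $\sigma\tilde n'=(\tilde n-n_-)(\tilde n-n_+)$ gives $\sigma\tilde n''=(2\tilde n-n_--n_+)\tilde n'$, and since $n_+\le\tilde n\le n_-$ on the profile, $|2\tilde n-n_--n_+|\le\varepsilon$; combined with $\sigma\ge\sigma_-/2$ this yields $|\tilde n''(\xi)|\le C\varepsilon\,|\tilde n'(\xi)|$.

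The only delicate step is the book-keeping that converts the natural decay rate $\varepsilon/\sigma$ dictated by the explicit formula into the target rate $\varepsilon/\sigma_-$ in \eqref{tilde_estimate}. The relation $\sigma_- -\sigma=O(\varepsilon)$ makes the discrepancy of lower order, and the constants in the $\mathrm{sech}^2$ envelope leave sufficient room once $\varepsilon_1$ is taken small enough. Everything else reduces to algebraic manipulation of the closed-form solution.
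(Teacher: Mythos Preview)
Your proof is correct. Part (1) is essentially identical to the paper's argument: both integrate the second equation of \eqref{NQ} to get $\tilde q'=-\tilde n'/\sigma$, substitute into the first equation, integrate once, and use $\sigma^2+\sigma q_-=n_+$ to obtain the logistic ODE \eqref{prop_til}.

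For Part (2) the approaches diverge. The paper does not invoke the closed-form $\mathrm{sech}^2$ solution; instead it uses the ODE \eqref{prop_til} as a differential inequality, bounding $(\tilde n-n_-)(\tilde n-n_+)$ by $\varepsilon(\tilde n-n_\pm)$ on each half-line (via the elementary fact that $\frac{\varepsilon}{2}\le|\tilde n-n_\mp|\le\varepsilon$ there), and then integrates to get exponential envelopes $\frac{\varepsilon}{2}e^{-\varepsilon|\xi|/\sigma}\le|\tilde n-n_\pm|\le\frac{\varepsilon}{2}e^{-\varepsilon|\xi|/(2\sigma)}$. Plugging these back into \eqref{prop_til} yields the two-sided bound on $\tilde n'$, and the passage from $\sigma$ to $\sigma_-$ is dispatched in one line via \eqref{s_estimate}. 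Your route through the explicit solution is slightly sharper (you get the same rate $\varepsilon/\sigma$ on both sides of the envelope, whereas the paper's Gronwall argument loses a factor of $2$ in the exponent on the upper side) and arguably cleaner, at the cost of relying on the special structure of the logistic equation. The paper's differential-inequality argument is more robust and would transfer unchanged to perturbations of the profile ODE. The derivation of $\sigma_--\sigma=O(\varepsilon)$ and the bound on $\tilde n''$ are handled the same way in both proofs.
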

\begin{proof}
$\bullet$ {\it proof of (1)} : 
The proof can be found in  \cite{LiWa_siam}  and \cite{WaHi}. Here we sketch its proof for completeness. 
Since 
$$\tiln''=-\s\tiln'-(\tiln \tilq)',$$
we have 
$$\tiln'=-\s(\tiln-n_-)-(\tiln\tilq-n_- q_-),$$
which can be written as
\[
\tiln'=-\s(\tiln-n_-)-\tiln(\tilq-q_-)-(\tiln-n_-)q_-.
\]
But, since $\tilq-q_- = -\frac{1}{\sigma}(\tiln-n_-)$ from $\tilq'=-\frac{1}{\sigma}\tiln'$, we have
\[
\frac{\tiln'}{\tiln-n_-}=-\s+\frac{\tiln}{\s} -q_-.
\] 
Since it follow from \eqref{sigma_e} that
\[
\sigma^2+q_-\sigma =n_+,
\]
we have
\[
\frac{\tiln'}{\tiln-n_-}=-\frac{\s^2+q_-\s -\tiln}{\s} =-\frac{n_+ -\tiln}{\s}. 
\]
That is,
\beq\label{st1}
{\tiln}'=\frac{({\tiln}-n_-)({\tiln}-n_+)}{\sigma}.
\eeq
This ODE has a smooth solution $\tiln$ connecting $n_-$ to $n_+$, and $\tiln'<0$. By $\tilq'=-\frac{1}{\sigma}\tiln'$ and \eqref{end-con}, we have $\tilq$.\\

$\bullet$ {\it proof of (2)} : 
First of all, since it follows from \eqref{sigma_e} and $n_+=n_--\eps$ that
$$\s=\frac{-q_-+\sqrt{q_-^2+4(n_--\vep)}}{2},$$ 
taking $\eps_1$ small enough such that 
\[
(\s_-/2)\leq(\s_--C\vep)\leq\s<\s_-,
\]
which gives \eqref{s_estimate}.\\
To show \eqref{tilde_estimate}, we first observe that \eqref{st1} yields
\beq\label{ode-n}
(\tiln -n_\pm)'=\frac{({\tiln}-n_-)({\tiln}-n_+)}{\sigma}.
\eeq
Since $\tiln'<0$ and ${\tiln}(0)=(n_-+n_+)/2$ imply
\begin{align}
\begin{aligned}\label{od}
&\xi\le0 ~\Rightarrow~n_--n_+ \ge \tilde n(\xi)-n_+\ge \tilde n(0)-n_+=\frac{n_--n_+}{2},\\
&\xi\ge0 ~\Rightarrow~n_--n_+ \ge n_--\tilde n(\xi)\ge n_-- \tilde n(0)=\frac{n_--n_+}{2}.
\end{aligned}
\end{align} 
it follows from \eqref{ode-n} and $n_--n_+=\eps$ that
\begin{align*}
\begin{aligned}
&\xi\le0 ~\Rightarrow~ -\frac{\eps}{2\s} (n_--{\tiln}) \le  (n_--\tiln)' \le -\frac{\eps}{\s} (n_--{\tiln}),\\
&\xi\ge0 ~\Rightarrow~-\frac{\eps}{\s} ({\tiln}-n_+) \le  (\tiln -n_+)' \le -\frac{\eps}{2\s} ({\tiln}-n_+).
\end{aligned}
\end{align*} 
These together with ${\tiln}(0)=(n_-+n_+)/2$ imply
\begin{align*}
\begin{aligned}
&\xi\le0 ~\Rightarrow~ \frac{\vep}{2}e^{-\frac{\vep|\xi|}{\s}}\leq(n_--\tiln)\leq\frac{\vep}{2}e^{-\frac{\vep|\xi|}{2\s}},\\
&\xi\ge0 ~\Rightarrow~\frac{\vep}{2}e^{-\frac{\vep\xi}{\s}}\leq(\tiln-n_+)\leq\frac{\vep}{2}e^{-\frac{\vep\xi}{2\s}}.
\end{aligned}
\end{align*} 
Applying the above estimates to \eqref{ode-n} together with \eqref{od}, we have 
$$-\frac{\vep^2}{2\s}e^{-\frac{\vep|\xi|}{2\s}} \leq \tiln'(\xi)\leq - \frac{\vep^2}{4\s}e^{-\frac{\vep|\xi|}{\s}}.$$
Finally, using \eqref{s_estimate}, we have the desired estimates in \eqref{tilde_estimate}.

Moreover, we differentiate \eqref{st1} to get
$$\tiln''=\tiln'\Big(\frac{\tiln-n_-}{\s}+\frac{\tiln-n_+}{\s}\Big). $$
Since
\[
\Big|\frac{\tiln-n_-}{\s}+\frac{\tiln-n_+}{\s}\Big|\leq \frac{2\eps}{\s} \le \frac{4\eps}{\s_-},
\]
we have
\[
|\tiln''(\xi)|\leq \frac{4\vep}{\s_-}|\tiln'(\xi)|.
\]
\end{proof}

\subsection{Definition of the weight $a$}
\ \\
For a given stationary solution  ${\tilu}$ of \eqref{nq}(\textit{i.e.} a solution of \eqref{NQ}), 
we define  $a(\cdot)$ by 
\be\la{def_a}
a:=1+\frac{\lambda}{\vep}(n_--\tiln).
\ee
Note 
\be\la{prop_a}
a(-\infty)=1, a(+\infty)=1+\lambda,\mbox{ and }a'=\Big(-\frac{\lambda}{\vep}\Big)\tiln'>0 \mbox{ by } \eqref{prop_til}.
\ee

\subsection{Relative entropy method}
As mentioned in Subsection \ref{sub_idea}, we employ the new analysis in \cite{KV_arxiv}, which is based on the relative entropy method. The method  is purely nonlinear, and allows to handle rough and large perturbations. The relative entropy method was first introduced by Dafermos \cite{Dafermos1} and Diperna \cite{DiPerna} to prove the $L^2$ stability and uniqueness of Lipschitz solutions to the hyperbolic conservation laws endowed with a convex entropy.\\ Recently, the relative entropy method has been extensively used in studying on the contraction (or stability) of large perturbations of viscous (or inviscid) shock waves (see \cite{CV,Kang19,Kang18_KRM,KV_arxiv,KVARMA,Kang-V-1,KVW,Leger,Serre-Vasseur,SV_16,SV_16dcds,Vasseur-2013,VW}).

To use the relative entropy method, we rewrite \eqref{nq} into the following general system of viscous conservation laws:
\be\la{U}
\pa_t U + \pa_{\xi}[A(U)]=\pa_\xi\Big[M(U)\pa_{\xi}\nabla\eta(U)\Big],
\ee
where 
\begin{align}
\begin{aligned}\label{def-flux}
&U:=\bmat n \\ q\emat, \quad A(U):=\bmat -nq -\sigma n \\ -n -\sigma q \emat ,\quad M(U):=\bmat n & 0 \\ 0 & 0 \emat,\\
&\eta(U):=\frac{|q|^2}{2}+{\Pi}(n)\quad \mbox{ where}\quad{\Pi}(n):=n\log n -n.
\end{aligned}
\end{align}
Indeed, since 
\beq\label{der-eta}
\nabla\eta(U):=\bmat \pa_n\eta(U)& \pa_q\eta(U)\emat=\bmat\log n & q \emat,
\eeq 
we see that \eqref{nq} is equivalent to \eqref{U}.\\
Notice that $\eta$ is a strictly convex entropy of the system \eqref{U}, since 
$$G(U):=-q n \log n -\sigma \eta(U)$$
is the entropy flux of $\eta$ such that $\partial_{i}  G (U) = \sum_{k=1}^{2}\partial_{k} \eta(U) \partial_{i}  A_{k} (U),\quad 1\le i\le 2$.\\

In general, for a given function $f$, we define its relative function $f(\cdot | \cdot)$ of two variables by
$$f(u|v):=f(u)-f(v)-\nabla f(v)(u-v).$$ Then for $U_i=\begin{pmatrix} n_i   \\ q_i  \end{pmatrix}, i=1,2$,
\beq\label{rel-A}
A(U_1|U_2)=A(U_1)-A(U_2)-\nabla A(U_2)(U_1-U_2)=
\bmat -(n_1-n_2)(q_1-q_2) \\ 0 \emat,
\eeq
and 
$$\eta(U_1|U_2)=\eta(U_1)-\eta(U_2)-\nabla\eta(U_2)(U_1-U_2)=
 \frac{|q_1-q_2|^2}{2}+{\Pi}(n_1 | n_2),$$
 { 
where 
\[
{\Pi}(n_1 | n_2)= {\Pi}(n_1)-{\Pi}(n_2)-\nabla{\Pi}(n_2)(n_1-n_2).
\]
Since ${\Pi}(n)=n\log n -n$, we find that 
\beq \label{def-pi}
{\Pi}(n_1 | n_2)= n_1\log(\frac{n_1}{n_2})-(n_1-n_2).
 \eeq
}
 
 We define the corresponding flux $G(\cdot;\cdot)$ for our relative entropy $\eta(\cdot|\cdot)$ by
\begin{align}
\begin{aligned}\label{rel-G}
G(U_1;U_2):&=G(U_1)-G(U_2)-\nabla\eta(U_2)(A(U_1)-A(U_2))\\
&= -(q_1-q_2){\Pi}(n_1| n_2)
-q_2{\Pi}(n_1| n_2)-(n_1-n_2)(q_1-q_2)-\s\eta(U_1|U_2).
\end{aligned}
\end{align}





In what follows, we use a simple notation: for any function $f:\mathbb{R}_{\geq0}\times\mathbb{R}\rightarrow\mathbb{R}$ and any shift $X:[0,\infty)\rightarrow \mathbb{R}$, 
 $$f^{\pm X}(t,\xi):=f(t,\xi\pm X(t)).$$
 
 { 
We also  introduce the function space
\be\label{def_H_space}
\mathcal{H}:=\{(m,p)\in  L^\infty(\bbr)\times L^\infty(\bbr) ~|~  m>0, m^{-1} \in L^{\infty}(\bbr),~  \pa_\xi \Big(\log\frac{m}{\tiln}\Big) \in L^2( \bbr) \}.
\ee \begin{remark}\label{rem_H}
As mentioned before, we consider the solution $U$ to \eqref{nq_nu} belonging to $\mathcal{X}_T$. Then, since $\partial_\xi n \in L^2((0,T)\times\bbr)$ and $n^{-1}\in L^\infty((0,T)\times\bbr)$, using $\tilde n\in L^\infty(\bbr)$ and $\tilde n' \in L^2(\bbr)$, we find 
\[
\pa_\xi \Big(\log\frac{n}{\tiln}\Big) \in L^2((0,T)\times\bbr),
\] 
which implies $U(t)\in \mathcal{H}$ for \textit{a.e.} $t\in[0,T]$. 
\end{remark}
}
\ \\

\begin{lemma}\la{lem_relative}
Let $\tilde U:={\tilde n \choose \tilde q}$ be the traveling wave in \eqref{NQ}, and $a:\bbr\to\bbr^+$ be the weight function by \eqref{def_a}. For any solution $U={  n \choose q  }
\in\mathcal{X}_T$ of \eqref{nq} 
for some $T>0$ and for any absolutely continuous shift $X:[0,T]\rightarrow \bbr$, we have, for \textit{a.e.} $t\in[0,T]$,
\begin{align}
\begin{aligned}\label{eq_rel1}
\frac{d}{dt}\int_{\bbr} a(\xi)\eta(U^X(t,\xi)|\tilde U(\xi)) d\xi =\dot X(t) \mathcal{Y}(U^X) +\mathcal{I}^{bad}(U^X)-\mathcal{I}^{good}(U^X),
\end{aligned}
\end{align}
where
\begin{align}
\begin{aligned}\label{badgood}
&\mathcal{Y}(U):= -\int_\bbr a'\eta(U|\tilde U) d\xi +\int_\bbr a\partial_\xi\nabla\eta(\tilde U) (U-\tilde U) d\xi,\\
&\mathcal{I}^{bad}(U):= -{\intr}\Big[a'{\Pi}(n|\tiln) +\Big( a'-a\frac{\tiln'}{\tiln} \Big) (n-\tiln)\Big](q-\tilq) d\xi-{\intr} a'\tilq{\Pi}(n|\tiln)d\xi  \\
&\qquad\qquad +\intr  \Big( a \frac{\tiln'}{\tiln} -a' \Big) n \Big(\log\frac{n}{\tiln}\Big)   \pa_\xi \Big(\log\frac{n}{\tiln}\Big) d\xi + \intr a \frac{\tiln''}{\tiln} \Pi(n|\tiln) d\xi,\\
&\mathcal{I}^{good}(U):=\s\intr a' \frac{|q-\tilq|^2}{2} d\xi+\s\intr a' {\Pi}(n | \tiln) d\xi  +\intr a n\Big| \pa_\xi \Big(\log\frac{n}{\tiln}\Big)\Big|^2 d\xi.
\end{aligned}
\end{align}
\end{lemma}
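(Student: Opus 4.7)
The plan is a direct relative entropy computation for the weighted functional, following the general calculus of Kang--Vasseur \cite{KV_arxiv}. The preparatory step is to note that, by \eqref{U}, the shifted state $U^X$ satisfies
\begin{equation*}
\pa_t U^X - \dot X(t)\,\pa_\xi U^X + \pa_\xi A(U^X) = \pa_\xi\bigl[M(U^X)\,\pa_\xi\nabla\eta(U^X)\bigr].
\end{equation*}
Since $\tilde U$ is $t$-independent, the chain rule (justified a.e.~on $[0,T]$ by $U(t)\in\mathcal{H}$ per Remark~\ref{rem_H} and the absolute continuity of $X$) gives
\begin{equation*}
\frac{d}{dt}\int_\bbr a\,\eta(U^X|\tilde U)\,d\xi = \int_\bbr a\,\bigl[\nabla\eta(U^X)-\nabla\eta(\tilde U)\bigr]\,\pa_t U^X\,d\xi.
\end{equation*}
I then substitute the displayed equation and split the right-hand side into three pieces, corresponding to the shift, the hyperbolic flux and the parabolic diffusion.

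For the shift piece, the identity $[\nabla\eta(U^X)-\nabla\eta(\tilde U)]\pa_\xi U^X = \pa_\xi\eta(U^X|\tilde U) + \pa_\xi\nabla\eta(\tilde U)(U^X-\tilde U)$ followed by one integration by parts against $a$ produces exactly $\dot X\,\mathcal{Y}(U^X)$. For the hyperbolic piece I use the analogous identity $[\nabla\eta(U^X)-\nabla\eta(\tilde U)]\pa_\xi A(U^X) = \pa_\xi G(U^X;\tilde U) + \pa_\xi\nabla\eta(\tilde U)(A(U^X)-A(\tilde U))$, then integrate by parts to get $\int a'\,G(U^X;\tilde U)\,d\xi$ minus $\int a\,\pa_\xi\nabla\eta(\tilde U)(A(U^X)-A(\tilde U))\,d\xi$. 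Plugging in the explicit flux \eqref{rel-G}, the first of these recovers the $\sigma$-part of $-\mathcal{I}^{good}$ along with the $-a'\Pi(n|\tilde n)(q-\tilde q)$, $-a'\tilde q\,\Pi(n|\tilde n)$ and $-a'(n-\tilde n)(q-\tilde q)$ contributions to $\mathcal{I}^{bad}$. The second, expanded using \eqref{def-flux}, \eqref{der-eta} and $\tilde q'=-\tilde n'/\sigma$, yields the $+\int a(\tilde n'/\tilde n)(n-\tilde n)(q-\tilde q)\,d\xi$ that assembles with the earlier $-\int a'(n-\tilde n)(q-\tilde q)\,d\xi$ into the $(a'-a\tilde n'/\tilde n)$ cross term, together with three linear remainders $\propto(n-\tilde n)$ of coefficients $\tilde q$, $\sigma$, $1/\sigma$.

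For the parabolic piece, since $M(U^X)=\mathrm{diag}(n,0)$ only the $n$-components contribute; one integration by parts followed by the splitting $\pa_\xi\log n = \pa_\xi\log(n/\tiln)+\tiln'/\tiln$ produces the good dissipation $-\int a\,n\,|\pa_\xi\log(n/\tiln)|^2\,d\xi$, the $(a\tiln'/\tiln - a')\,n\log(n/\tiln)\,\pa_\xi\log(n/\tiln)$ contribution to $\mathcal{I}^{bad}$, and a remainder which, after a further integration by parts and use of $\pa_\xi(a\tiln'/\tiln)$, takes the form $\int a(\tiln''/\tiln)\,n\log(n/\tiln)\,d\xi$. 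Writing $n\log(n/\tiln)=\Pi(n|\tiln)+(n-\tiln)$ splits this last term into the wanted $\int a(\tiln''/\tiln)\Pi(n|\tiln)\,d\xi$ plus an unwanted linear remainder. The main bookkeeping point, and the only real obstacle, is to verify that all the leftover linear-in-$(n-\tiln)$ terms cancel: substituting the traveling wave identity $\tiln''/\tiln=-(\tiln'/\tiln)(\sigma+\tilq)+\tiln'/\sigma$ (obtained by differentiating $\tiln'=-\sigma(\tiln-n_-)-(\tiln\tilq-n_-q_-)$ and using $\tilq'=-\tiln'/\sigma$ from Lemma~\ref{til_prop}) into $\int a(\tiln''/\tiln)(n-\tiln)\,d\xi$ exactly cancels the three hyperbolic linear remainders. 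All boundary terms vanish thanks to the exponential decay of $\tiln-n_\pm$, $\tilq-q_\pm$, $\tiln'$ from Lemma~\ref{til_prop} together with $\int\eta(U^X|\tilde U)\,d\xi<\infty$ and $\pa_\xi\log(n/\tiln)\in L^2$ coming from $U\in\mathcal{X}_T$.
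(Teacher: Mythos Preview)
Your argument is correct and follows the same relative-entropy strategy as the paper: differentiate the weighted functional, split into shift/hyperbolic/parabolic pieces, integrate by parts, and use the explicit formulas \eqref{def-flux}--\eqref{rel-G}. The only organizational difference is the placement of the traveling-wave equation: the paper first moves the shift onto $\tilde U$ via \eqref{move-X} and then uses the \emph{equation} for $\tilde U^{-X}$ in the time derivative, so that the hyperbolic contribution comes out directly as $-\int a\,\partial_\xi\nabla\eta(\tilde U)\,A(U|\tilde U)\,d\xi$ (with $A(U|\tilde U)$ already free of linear-in-$(n-\tiln)$ terms) and the parabolic piece automatically carries the term $I_{34}=-\int a\,(\tiln''/\tiln)(n-\tiln)\,d\xi$ that combines with $n\log(n/\tiln)$ to give $\Pi(n|\tiln)$. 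You instead keep $\tilde U$ fixed and use only the equation for $U^X$, which produces $-\int a\,\partial_\xi\nabla\eta(\tilde U)(A(U^X)-A(\tilde U))\,d\xi$ in the hyperbolic step and hence three extra linear remainders, and you cancel these at the end by inserting the ODE identity $\tiln''/\tiln=-(\tiln'/\tiln)(\sigma+\tilq)+\tiln'/\sigma$ into the parabolic remainder $\int a(\tiln''/\tiln)(n-\tiln)\,d\xi$. The two routes are algebraically equivalent; the paper's version has the mild advantage that, for weak solutions in $\mathcal{X}_T$, all time-differentiation lands on the smooth object $\tilde U^{-X}$ rather than on the shifted solution $U^X$, which slightly eases the justification.
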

 \begin{remark}
By Remark \ref{rem_H}, we know $U(t)\in \mathcal{H}$ for \textit{a.e.} $t\in[0,T]$. It makes the above functionals $\mathcal{Y}, \mathcal{I}^{bad}, \mathcal{I}^{good}$ in \eqref{badgood}  well-defined  for \textit{a.e.} $t\in[0,T]$. 
\end{remark}
\begin{proof}
To derive the desired structure, we  use here a change of variables $\xi\mapsto \xi-X(t)$ as
\beq\label{move-X}
\int_{\bbr} a(\xi)\eta(U^X(t,\xi)|\tilde U(\xi)) d\xi=\int_{\bbr} a^{-X}(\xi)\eta(U(t,\xi)|\tilde U^{-X}(\xi)) d\xi.
\eeq
Then, by a straightforward computation together with \cite[Lemma 4]{Vasseur_Book} and the identity $G(U;V)=G(U|V)-\nabla\eta(V)A(U|V)$ (see also \cite{KV_arxiv}), we have
\begin{align*}
\begin{aligned}
&\frac{d}{dt}\int_{\bbr} a^{-X}(\xi)\eta(U(t,\xi)|\tilde U^{-X}(\xi)) d\xi\\
&=-\dot X \int_{\bbr} \!a'^{-X} \eta(U|\tilde U^{-X} ) d\xi +\int_\bbr \!\!a^{-X}\bigg[\Big(\nabla\eta(U)-\nabla\eta(\tilde U^{-X})\Big)\!\Big(\!\!\!-\partial_\xi A(U)+ \partial_\xi\Big(M(U)\partial_\xi\nabla\eta(U) \Big) \Big)\\
&\qquad -\nabla^2\eta(\tilde U^{-X}) (U-\tilde U^{-X}) \Big(-\dot X \partial_\xi\tilde U^{-X} -\partial_\xi A(\tilde U^{-X})+ \partial_\xi\Big(M(\tilde U^{-X})\partial_\xi\nabla\eta(\tilde U^{-X}) \Big)\Big)  \bigg] d\xi\\
&\quad =\dot X \Big( -\int_\bbr a'^{-X}\eta(U|\tilde U^{-X}) d\xi +\int_\bbr a^{-X}\partial_\xi\nabla\eta(\tilde U^{-X}) (U-\tilde U^{-X}) d\xi \Big) +I_1+I_2+I_3,
\end{aligned}
\end{align*}
where 
\begin{align*}
\begin{aligned}
&I_1:=-\int_\bbr a^{-X} \partial_\xi G(U;\tilde U^{-X}) d\xi,\\
&I_2:=- \int_\bbr a^{-X} \partial_\xi \nabla\eta(\tilde U^{-X}) A(U|\tilde U^{-X}) d\xi,\\
&I_3:=\int_\bbr a^{-X} \Big( \nabla\eta(U)-\nabla\eta(\tilde U^{-X})\Big) \partial_\xi \Big(M(U) \partial_\xi \nabla\eta(U) \Big)  d\xi \\
&\qquad -\int_\bbr a^{-X}\nabla^2\eta(\tilde U^{-X}) (U-\tilde U^{-X})\partial_\xi\Big(M(\tilde U^{-X})\partial_\xi\nabla\eta(\tilde U^{-X})\Big) d\xi.
\end{aligned}
\end{align*}
We first use \eqref{rel-A} and \eqref{rel-G} to have
\begin{align*}
\begin{aligned}
I_1&=\int_\bbr (a')^{-X} G(U;\tilde U^{-X}) d\xi\\
&=-{\intr} (a')^{-X}(q-\tilq^{-X}){\Pi}(n|\tiln^{-X})d\xi-{\intr} (a')^{-X}\tilq^{-X}{\Pi}(n|\tiln^{-X})d\xi\\
&\quad -\intr(a')^{-X}(n-\tiln^{-X})(q-\tilq^{-X})d\xi -\s\intr (a')^{-X}\eta(U|\tilu^{-X})d\xi,\\
I_2&=\int_\bbr a^{-X}\frac{(\tiln')^{-X}}{\tiln^{-X}}(n-\tiln^{-X})(q-\tilq^{-X})d\xi.
\end{aligned}
\end{align*}
For the parabolic part $I_3$, we rewrite it into
\begin{align*}
\begin{aligned}
I_3&=\intr a^{-X}\Big[\nabla \eta(U)-\nabla\eta(\tilu^{-X})\Big]\pa_\xi\Big[M(U)\pa_{\xi}[\nabla\eta(U)-\nabla\eta(\tilu^{-X})]\Big]d\xi\\
&\quad+\intr a^{-X}\Big[\nabla \eta(U)-\nabla\eta(\tilu^{-X})\Big]\pa_\xi\Big[M(U)\pa_{\xi}\nabla\eta(\tilu^{-X}) \Big]d\xi\\
&\quad -\int_\bbr a^{-X}\nabla^2\eta(\tilde U^{-X}) (U-\tilde U^{-X})\partial_\xi\Big[M(\tilde U^{-X})\partial_\xi\nabla\eta(\tilde U^{-X})\Big] d\xi\\
&=-\intr a^{-X}\pa_\xi\Big[\nabla \eta(U)-\nabla\eta(\tilu^{-X})\Big]\Big[M(U)\pa_{\xi}[\nabla\eta(U)-\nabla\eta(\tilu^{-X})]\Big]d\xi\\
&\quad-\intr (a')^{-X}\Big[\nabla \eta(U)-\nabla\eta(\tilu^{-X})\Big]\Big[M(U)\pa_{\xi}[\nabla\eta(U)-\nabla\eta(\tilu^{-X})]\Big]d\xi\\
&\quad+\intr a^{-X}\Big[\nabla \eta(U)-\nabla\eta(\tilu^{-X})\Big]\pa_\xi\Big[M(U)\pa_{\xi}\nabla\eta(\tilu^{-X}) \Big]d\xi\\
&\quad -\int_\bbr a^{-X}\nabla^2\eta(\tilde U^{-X}) (U-\tilde U^{-X})\partial_\xi\Big[M(\tilde U^{-X})\partial_\xi\nabla\eta(\tilde U^{-X})\Big] d\xi\\
&=: I_{31}+I_{32}+I_{33}+I_{34}.
\end{aligned}
\end{align*}
Substituting the explicit quantities in \eqref{def-flux}, we have
\begin{align*}
\begin{aligned}
I_{31}&= -\intr a^{-X} n\Big| \pa_\xi \Big(\log\frac{n}{\tiln^{-X}}\Big)\Big|^2 d\xi,\\
I_{32}&=-\intr (a')^{-X} n \Big(\log\frac{n}{\tiln^{-X}}\Big)   \pa_\xi \Big(\log\frac{n}{\tiln^{-X}}\Big)d\xi,\\
I_{33}&=\intr a^{-X}  \Big(\log\frac{n}{\tiln^{-X}}\Big)   \pa_\xi \Big(n \partial_\xi \log \tiln^{-X} \Big)d\xi,\\
I_{34}&=-\intr a^{-X}  \frac{\pa_\xi(\tiln^{-X}\pa_\xi \log\tiln^{-X})}{\tiln^{-X}} (n-\tiln^{-X}) d\xi = -\intr a^{-X}  \frac{(\tiln'')^{-X}}{\tiln^{-X}} (n-\tiln^{-X}) d\xi.
\end{aligned}
\end{align*}
Since
\begin{align*}
\begin{aligned}
I_{33}&=\intr a^{-X}  \Big(\log\frac{n}{\tiln^{-X}}\Big)   \pa_\xi \Big(n \partial_\xi \log \tiln^{-X} \Big)d\xi\\
&=\intr a^{-X}\Big(\log\frac{n}{\tiln^{-X}}\Big)\pa_\xi\Big(\frac{n}{\tiln^{-X}}(\tiln')^{-X}\Big)d\xi\\
&=\intr a^{-X}\Big(\log\frac{n}{\tiln^{-X}}\Big)\pa_\xi\Big(\frac{n}{\tiln^{-X}}\Big)(\tiln')^{-X} d\xi +\intr a^{-X}\Big(\log\frac{n}{\tiln^{-X}}\Big)\Big(\frac{n}{\tiln^{-X}}\Big)(\tiln'')^{-X} d\xi  \\
&=\intr a^{-X}\frac{(\tiln')^{-X}}{\tiln^{-X}}  n \Big(\log\frac{n}{\tiln^{-X}}\Big)   \pa_\xi \Big(\log\frac{n}{\tiln^{-X}}\Big) + \intr a^{-X} \frac{(\tiln'')^{-X}}{\tiln^{-X}} n \Big(\log\frac{n}{\tiln^{-X}}\Big)d\xi,
\end{aligned}
\end{align*}
we use \eqref{def-pi} to have
\begin{align*}
\begin{aligned}
I_{32}+I_{33}+I_{34}&=\intr  \Big( a^{-X} \frac{(\tiln')^{-X}}{\tiln^{-X}} -(a')^{-X} \Big) n \Big(\log\frac{n}{\tiln^{-X}}\Big)   \pa_\xi \Big(\log\frac{n}{\tiln^{-X}}\Big) d\xi\\
&\quad+ \intr a^{-X} \frac{(\tiln'')^{-X}}{\tiln^{-X}} \Pi(n|\tiln^{-X}) d\xi.
\end{aligned}
\end{align*}

Therefore, we have
\begin{align*}
\begin{aligned}
&\frac{d}{dt}\int_{\bbr} a^{-X}\eta(U|\tilde U^{-X}) d\xi\\
&\quad =\dot X \Big( -\int_\bbr a'^{-X}\eta(U|\tilde U^{-X}) d\xi +\int_\bbr a^{-X}\partial_\xi\nabla\eta(\tilde U^{-X}) (U-\tilde U^{-X}) d\xi  \Big)\\
&\qquad -{\intr} (a')^{-X}(q-\tilq^{-X}){\Pi}(n|\tiln^{-X})d\xi-{\intr} (a')^{-X}\tilq^{-X}{\Pi}(n|\tiln^{-X})d\xi  \\
&\qquad -\intr \Big( (a')^{-X}-a^{-X}\frac{(\tiln')^{-X}}{\tiln^{-X}} \Big) (n-\tiln^{-X})(q-\tilq^{-X})d\xi -\s\intr (a')^{-X}\eta(U|\tilu^{-X})d\xi \\
&\qquad +\intr  \Big( a^{-X} \frac{(\tiln')^{-X}}{\tiln^{-X}} -(a')^{-X} \Big) n \Big(\log\frac{n}{\tiln^{-X}}\Big)   \pa_\xi \Big(\log\frac{n}{\tiln^{-X}}\Big) d\xi \\
&\qquad + \intr a^{-X} \frac{(\tiln'')^{-X}}{\tiln^{-X}} \Pi(n|\tiln^{-X}) d\xi -\intr a^{-X} n\Big| \pa_\xi \Big(\log\frac{n}{\tiln^{-X}}\Big)\Big|^2 d\xi.
\end{aligned}
\end{align*}
Again, we use a change of variable $\xi\mapsto \xi+X(t)$ to have
\begin{align*}
\begin{aligned}
&\frac{d}{dt}\int_{\bbr} a\eta(U^X|\tilde U) d\xi\\
&\quad =\dot X \Big( -\int_\bbr a'\eta(U^X|\tilde U) d\xi +\int_\bbr a\partial_\xi\nabla\eta(\tilde U) (U^X-\tilde U) d\xi  \Big)\\
&\qquad -{\intr} a'(q^X-\tilq){\Pi}(n^X|\tiln)d\xi-{\intr} a'\tilq{\Pi}(n^X|\tiln)d\xi  \\
&\qquad -\intr \Big( a'-a\frac{\tiln'}{\tiln} \Big) (n^X-\tiln)(q^X-\tilq)d\xi -\s\intr a'\eta(U^X|\tilu)d\xi \\
&\qquad +\intr  \Big( a \frac{\tiln'}{\tiln} -a' \Big) n^X \Big(\log\frac{n^X}{\tiln}\Big)   \pa_\xi \Big(\log\frac{n^X}{\tiln}\Big) d\xi + \intr a \frac{\tiln''}{\tiln} \Pi(n^X|\tiln) d\xi\\
&\qquad -\intr a n^X\Big| \pa_\xi \Big(\log\frac{n^X}{\tiln}\Big)\Big|^2 d\xi.
\end{aligned}
\end{align*}
\end{proof}

\begin{remark}\label{rem:0}
Notice that since $\sigma>0$ and $ a' >0$, the three terms of $\mathcal{I}^{good}$ in \eqref{badgood} are non-negative. Therefore, $-\mathcal{I}^{good}$ consists of good terms, while $\mathcal{I}^{bad}$ consists of bad terms. 
\end{remark}

\subsection{Maximization in terms of $q-\tilq$}\la{subsec_def_again}
In order to estimate the right-hand side of \eqref{eq_rel1}, we will use Proposition \ref{prop_near} on a sharp estimate with respect to $n-\tiln$ when $|n-\tiln|\ll1$, for which we will first rewrite the functional $\mathcal{I}^{bad}$ in the right-hand side of \eqref{eq_rel1} into the maximized representation in terms of $q-\tilq$. More precisely, we use the first good term of  $\mathcal{I}^{good}  $  in \eqref{badgood}:
\[
-\s\intr a' \frac{|q-\tilq|^2}{2} d\xi,
\]
to separate $q-\tilq$ from the factors related to $n$ in the first term of $\mathcal{I}^{bad}  $ in  \eqref{badgood}. However, we will keep  $\mathcal{I}^{bad}$ for remaining cases as follows.

\begin{lemma}\la{lem_max}
Let $\tilde U:={\tilde n \choose \tilde q}$ be the traveling wave in \eqref{NQ}, and $a:\bbr\to\bbr^+$ be the weight function by \eqref{def_a}.
 Let $\delta$ be any positive constant.  Then, for any $U={  n \choose q  }\in\mathcal{H}$, we have
\begin{align}
\begin{aligned}\label{eq_relative}
\mathcal{I}^{bad}(U)-\mathcal{I}^{good}(U) =\mathcal{B}_\delta(U)-\mathcal{G}_\delta(U),
\end{aligned}
\end{align} 
where
\begin{align}
\begin{aligned}\label{bg-max}
&\varphi(n):= \frac{1}{\sigma}\Big(\Pi(n|\tiln) +\left(1+\frac{\eps}{\lambda} \frac{a}{\tiln} \right)(n-\tiln) \Big),\\
&\mathcal{B}_\delta(U):= -{\intr} a'\tilq{\Pi}(n|\tiln)d\xi +\frac{\sigma}{2} \intr a' |\varphi(n)|^2 {\mathbf 1}_{\{|(n/\tiln)-1| \le \delta\}} d\xi  \\
&\qquad\qquad -{\intr}a' \Big[{\Pi}(n|\tiln) + \Big( 1+ \el\frac{a}{\tiln}\Big) (n-\tiln)\Big](q-\tilq) {\mathbf 1}_{\{|(n/\tiln)-1| > \delta\}} d\xi\\
&\qquad\qquad -{\intr}a'\Big( 1+ \el\frac{a}{\tiln}\Big)n \Big(\log\frac{n}{\tiln}\Big)   \pa_\xi \Big(\log\frac{n}{\tiln}\Big) d\xi -\el{\intr}a''\frac{a}{\tiln} \Pi(n|\tiln) d\xi,\\
&\mathcal{G}_\delta(U):=\frac{\sigma }{2}\intr a' \Big(q-\tilq +\varphi(n) \Big)^2 {\mathbf 1}_{\{|(n/\tiln)-1| \le \delta\}}  d\xi  +\s\intr a' \frac{|q-\tilq|^2}{2}{\mathbf 1}_{\{|(n/\tiln)-1| > \delta\}}  d\xi\\
&\qquad\qquad +\s\intr a' {\Pi}(n | \tiln) d\xi  +\intr a n\Big| \pa_\xi \Big(\log\frac{n}{\tiln}\Big)\Big|^2 d\xi.
\end{aligned}
\end{align}
\end{lemma}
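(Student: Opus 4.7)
The approach is purely algebraic: use the explicit definition of the weight $a$ in \eqref{def_a} to rewrite the $\tiln$-derivatives appearing in $\mathcal{I}^{bad}$ in terms of $a$-derivatives, and then complete the square in $q-\tilq$ on the small-perturbation region $\{|n/\tiln-1|\le\delta\}$ by absorbing part of the quadratic good term $\sigma\int a'|q-\tilq|^2/2\,d\xi$ from $\mathcal{I}^{good}$.

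First I would record the identities that follow from \eqref{def_a}, namely $a'=-(\lambda/\eps)\tiln'$ and $a''=-(\lambda/\eps)\tiln''$. These give
\[
\frac{\tiln'}{\tiln}=-\frac{\eps}{\lambda}\frac{a'}{\tiln},\qquad \frac{\tiln''}{\tiln}=-\frac{\eps}{\lambda}\frac{a''}{\tiln},\qquad a'-a\frac{\tiln'}{\tiln}=a'\Big(1+\el\frac{a}{\tiln}\Big),
\]
and applied to $\mathcal{I}^{bad}$ in \eqref{badgood} they convert the two $\tiln'/\tiln$-terms and the $\tiln''/\tiln$-term into exactly the third, fourth, and fifth integrals of $\mathcal{B}_\delta$ in \eqref{bg-max}; the term $-\int a'\tilq\,\Pi(n|\tiln)\,d\xi$ is already in the desired form. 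Collecting the two pieces of $\mathcal{I}^{bad}$ linear in $q-\tilq$ then yields the single expression
\[
-\int a'\Big[\Pi(n|\tiln)+\Big(1+\el\frac{a}{\tiln}\Big)(n-\tiln)\Big](q-\tilq)\,d\xi=-\sigma\!\int a'\varphi(n)(q-\tilq)\,d\xi,
\]
with $\varphi$ as in \eqref{bg-max}.

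Next I would split this integral by the indicators $\mathbf{1}_{\{|n/\tiln-1|\le\delta\}}$ and $\mathbf{1}_{\{|n/\tiln-1|>\delta\}}$. The contribution on the large-perturbation region is left as the third summand of $\mathcal{B}_\delta$, and the matching portion of the quadratic good term stays as the second summand of $\mathcal{G}_\delta$. On the small-perturbation region I combine the cross term with that portion of the quadratic good term and complete the square:
\[
-\sigma\!\int a'\varphi(n)(q-\tilq)\mathbf{1}\,d\xi-\frac{\sigma}{2}\!\int a'|q-\tilq|^2\mathbf{1}\,d\xi=-\frac{\sigma}{2}\!\int a'\big|q-\tilq+\varphi(n)\big|^2\mathbf{1}\,d\xi+\frac{\sigma}{2}\!\int a'|\varphi(n)|^2\mathbf{1}\,d\xi.
\]
The negative square goes into $-\mathcal{G}_\delta$ (its first summand) and the residual $+(\sigma/2)\int a'|\varphi(n)|^2\mathbf{1}\,d\xi$ into $\mathcal{B}_\delta$ (its second summand). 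The remaining non-negative pieces $\sigma\int a'\Pi(n|\tiln)\,d\xi$ and $\int a\,n|\pa_\xi\log(n/\tiln)|^2\,d\xi$ pass unchanged from $-\mathcal{I}^{good}$ into $-\mathcal{G}_\delta$. Reassembling all contributions gives the identity \eqref{eq_relative}.

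There is no conceptual obstacle here: the proof hinges only on the elementary algebraic identity of square completion together with the structural relations between $a$ and $\tiln$. The only point requiring a brief check is that every integral is well-defined for $U\in\mathcal{H}$, which is immediate from \eqref{def_H_space}, the boundedness of $a,a',a''$ and $\tiln,\tiln',\tiln''$ provided by Lemma \ref{til_prop} and \eqref{prop_a}, the lower bound $\tiln\ge n_+>0$, and the fact that $\Pi(n|\tiln)$ and $\varphi(n)$ depend polynomially on $n$ on the region where $|n-\tiln|$ is bounded and logarithmically otherwise, controlled by $\pa_\xi\log(n/\tiln)\in L^2(\bbr)$.
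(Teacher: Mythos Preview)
Your proof is correct and follows essentially the same route as the paper: rewrite the $\tiln'/\tiln$ and $\tiln''/\tiln$ factors in $\mathcal{I}^{bad}$ via the identities $a'=-(\lambda/\eps)\tiln'$, $a''=-(\lambda/\eps)\tiln''$, split the cross term and the quadratic good term by the indicator, and complete the square on the region $\{|n/\tiln-1|\le\delta\}$. The paper phrases the last step as the identity $\alpha x^2+\beta x=\alpha(x+\beta/2\alpha)^2-\beta^2/4\alpha$ with $x=q-\tilq$, but the content is identical to yours.
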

\begin{remark}
The bad term $\mathcal{B}_\delta( U)$ does not ask any information on   $q$ when   $|(n/\tiln)-1| \leq \delta$ .
\end{remark}
\begin{proof}
First of all, using $a'=-\frac{\lambda}{\vep}\tiln'$ and ${\tilq}'=-\frac{{\tiln}'}{\sigma}$, we have from \eqref{badgood} that
\begin{align*}
\begin{aligned}
&\mathcal{I}^{bad}(U):=\underbrace{ -{\intr}a' \Big[{\Pi}(n|\tiln) + \Big( 1+ \el\frac{a}{\tiln}\Big) (n-\tiln)\Big](q-\tilq) {\mathbf 1}_{\{|(n/\tiln)-1| \le \delta\}}  d\xi }_{=:J_1} \\
&\qquad\qquad  -{\intr}a' \Big[{\Pi}(n|\tiln) + \Big( 1+ \el\frac{a}{\tiln}\Big) (n-\tiln)\Big](q-\tilq) {\mathbf 1}_{\{|(n/\tiln)-1| > \delta\}}  d\xi  -{\intr} a'\tilq{\Pi}(n|\tiln)d\xi \\
&\qquad\qquad -{\intr}a'\Big( 1+ \el\frac{a}{\tiln}\Big) n \Big(\log\frac{n}{\tiln}\Big)   \pa_\xi \Big(\log\frac{n}{\tiln}\Big) d\xi -\el{\intr}a''\frac{a}{\tiln} \Pi(n|\tiln) d\xi, \\
&-\mathcal{I}^{good}(U):=\underbrace{ -\s\intr a' \frac{|q-\tilq|^2}{2}{\mathbf 1}_{\{|(n/\tiln)-1| \le \delta\}} d\xi}_{=:J_2}  -\s\intr a' \frac{|q-\tilq|^2}{2}{\mathbf 1}_{\{|(n/\tiln)-1| > \delta\}} d\xi \\
&\qquad\qquad -\s\intr a' {\Pi}(n | \tiln) d\xi  -\intr a n\Big| \pa_\xi \Big(\log\frac{n}{\tiln}\Big)\Big|^2 d\xi.
\end{aligned}
\end{align*}
By using a simple identity $\alpha x^2+ \beta x =\alpha(x+\frac{\beta}{2\alpha})^2-\frac{\beta^2}{4\alpha}$ with putting $x:=q-\tilq$, we have
\[
J_1+J_2 =\frac{\sigma}{2} \intr a' |\varphi(n)|^2 {\mathbf 1}_{\{|(n/\tiln)-1| \le \delta\}}  d\xi-\frac{\sigma }{2}\intr a' \Big(q-\tilq +\varphi(n) \Big)^2{\mathbf 1}_{\{|(n/\tiln)-1| \le \delta\}}  d\xi.
\]
Therefore, we have the desired relation.
\end{proof}

 \subsection{Global and local estimates on the relative quantity ${\Pi}(\cdot|\cdot)$}

\subsubsection{Global estimates on the relative quantity ${\Pi}(\cdot|\cdot)$} 

\begin{lemma}\la{cor_Q}
For given constants $\delta\in(0,\frac{1}{2}]$ and $n_->0$, there exist positive constants $C_1=C_1(n_-), C_2=C_2(n_-,\delta)$ and $C_3=C_3(n_-,\delta)$  such 
that the following inequalities hold:\\
1) For any $n_1>0$ and any $n_2>0$ with $\frac{n_-}{2}<n_2<n_-$,  
\be\la{Q_loc_} \frac{1}{{C_{1}}}|n_1-n_2|^2\leq  {\Pi}(n_1| n_2)\leq  {{C_{1}}}|n_1-n_2|^2 \quad \mbox{whenever }\, |\frac{n_1}{n_2}-1|\leq\delta,\ee 
\be\la{Q_global_} \frac{1}{{C_{2}}}(1+n_1\log^+ \frac{n_1}{n_2})\leq {\Pi}(n_1| n_2)\leq {{C_{2}}}(1+n_1\log^+ \frac{n_1}{n_2}) \quad \mbox{whenever }\, |\frac{n_1}{n_2}-1|\geq\delta,\ee 
\be\la{upper_1/2}\begin{split}
 & \frac{1}{{C_3}} |n_1-n_2|\leq {\Pi}(n_1|n_2) \le C_3|n_1-n_2|^2 \quad \mbox{whenever} \quad |\frac{n_1}{n_2}-1|\geq\delta,
 \end{split}\ee
 where $\log^+(y)$ is the positive part of $\log(y)$.\\

2) For any $n_1,n_2,m>0$ satisfying $m\le n_2 \le n_1$ or $n_1\le n_2 \le m$,
\beq\label{Pi-mono}
\Pi(n_1|m) \ge \Pi(n_2|m). 
\eeq

\end{lemma}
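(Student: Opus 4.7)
\textbf{Proof proposal for Lemma \ref{cor_Q}.}

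The key tool is the second-order Taylor-with-remainder representation for $\Pi(n)=n\log n-n$, namely $\Pi''(n)=1/n$, which yields the identity
\[
\Pi(n_1|n_2) \;=\; (n_1-n_2)^2 \int_0^1 \frac{1-s}{n_2 + s(n_1-n_2)}\,ds.
\]
This single integral representation will drive most of the estimates. I would first prove the local bounds \eqref{Q_loc_}: under the constraint $|n_1/n_2-1|\le \delta\le 1/2$, the denominator $n_2+s(n_1-n_2)$ is bounded above and below by constant multiples of $n_2$, and hence of $n_-$ (since $n_2\in(n_-/2,n_-)$). Integrating $(1-s)$ gives the $|n_1-n_2|^2$ behavior with constants depending only on $n_-$.

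For the bounds \eqref{Q_global_} and \eqref{upper_1/2} in the "far" region $|n_1/n_2-1|\ge\delta$, I would split into the two sub-cases $n_1\ge (1+\delta)n_2$ and $0<n_1\le(1-\delta)n_2$. In the first sub-case, $\log^+(n_1/n_2)=\log(n_1/n_2)$, and one observes the limit $\Pi(n_1|n_2)/\bigl(1+n_1\log(n_1/n_2)\bigr)\to 1$ as $n_1\to\infty$, while at the boundary $n_1=(1+\delta)n_2$ both quantities are strictly positive and continuous; a compactness argument on $x=n_1/n_2\in[1+\delta,\infty)$ (using $n_2$ in the compact interval $[n_-/2,n_-]$) yields two-sided constants. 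Similarly $\Pi(n_1|n_2)/(n_1-n_2)^2$ tends to $0$ at $\infty$ and is bounded on compacts, giving the upper inequality in \eqref{upper_1/2}; the lower bound $\Pi\gtrsim|n_1-n_2|$ is obtained in the same way, using that $\Pi(n_1|n_2)/|n_1-n_2|\to\infty$ as $n_1\to\infty$. In the second sub-case $n_1\le(1-\delta)n_2$, $\log^+=0$ so \eqref{Q_global_} reduces to a pure two-sided constant bound on $\Pi$; since $n_1\mapsto \Pi(n_1|n_2)$ is continuous on $(0,n_2]$, tends to $n_2$ as $n_1\to 0^+$ and is strictly positive on $(0,(1-\delta)n_2]$, the bounds follow (again using $n_2\asymp n_-$). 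For \eqref{upper_1/2} in this sub-case one notes $|n_1-n_2|\in[\delta n_2,n_2]$ so $\Pi$ and $|n_1-n_2|$ and $|n_1-n_2|^2$ are all mutually comparable up to constants depending on $n_-,\delta$.

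Finally the monotonicity \eqref{Pi-mono} is a one-line variational observation: $\partial_{n_1}\Pi(n_1|m)=\log(n_1/m)$, which is $\ge 0$ for $n_1\ge m$ and $\le 0$ for $n_1\le m$. Hence $\Pi(\cdot|m)$ is non-decreasing on $[m,\infty)$ and non-increasing on $(0,m]$, and both chains of inequalities $m\le n_2\le n_1$ and $n_1\le n_2\le m$ give $\Pi(n_1|m)\ge\Pi(n_2|m)$ immediately.

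The only genuinely delicate step is tracking how the constants $C_2,C_3$ depend on $\delta$ (in particular their blow-up as $\delta\to 0$, since the compact intervals shrink to a point); but since $\delta$ is a \emph{fixed} parameter in the statement, no uniformity in $\delta$ is required, so compactness is enough and there is no real obstacle.
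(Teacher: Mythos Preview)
Your proposal is correct and follows essentially the same route as the paper: the integral Taylor-remainder representation for the local estimate \eqref{Q_loc_}, reduction to the scaled variable $y=n_1/n_2$ via $\Pi(n_1|n_2)=n_2\hat\Pi(y)$ with $\hat\Pi(y)=y\log y-(y-1)$ and continuity/limit arguments for \eqref{Q_global_} and \eqref{upper_1/2}, and monotonicity of $\Pi(\cdot|m)$ away from $m$ for \eqref{Pi-mono}. The only cosmetic differences are that the paper uses the double-integral form $(n_1-n_2)^2\int_0^1\int_0^1 t\,\Pi''(n_2+st(n_1-n_2))\,ds\,dt$ instead of your single-integral form, and phrases \eqref{Pi-mono} via strict convexity rather than the sign of $\partial_{n_1}\Pi(n_1|m)=\log(n_1/m)$; these are equivalent.
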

\begin{proof}
$\bullet$ {\it proof of \eqref{Q_loc_}} : 
We use the fact that the definition of the relative functional implies
\[
\Pi(n_1|n_2)=(n_1-n_2)^2\int_0^1\int_0^1 \Pi''(n_2 + st(n_1-n_2)) t dsdt.
\]
Notice that since $\Pi''(n)=1/n$,
\[
\Pi''(n_2 + st(n_1-n_2))=\frac{1}{st n_1 +(1-st)n_2},
\]
Since $|\frac{n_1}{n_2}-1|\leq\delta\le\frac{1}{2} $ and $\frac{n_-}{2}<n_2<n_-$, we have
\[
\frac{n_-}{4}<n_1<\frac{3n_-}{2}.
\]
Thus for any $0\le s,t\le 1$,
\[
\frac{1}{st \frac{3n_-}{2} +(1-st)n_-} \le \Pi''(n_2 + st(n_1-n_2)) \le \frac{1}{st  \frac{n_-}{4} +(1-st)  \frac{n_-}{2}}.
\]
Hence 
\[
c_1(n_1-n_2)^2 \le \Pi(n_1|n_2)\leq  c_2 (n_1-n_2)^2,
\]
where the constant $c_1, c_2$ only depends on $n_-$ as
\[
c_1=\int_0^1\int_0^1 \frac{t}{st \frac{3n_-}{2} +(1-st)n_-} dsdt,\quad c_2=\int_0^1\int_0^1 \frac{t}{st  \frac{n_-}{4} +(1-st)  \frac{n_-}{2}} dsdt.
\]

 {
$\bullet$ {\it proof of \eqref{Q_global_}} :
First of all, we observe from \eqref{def-pi} that
\beq\label{newdef-pi}
\Pi(n_1|n_2)=n_2\hat{{\Pi}}\Big(\frac{n_1}{n_2}\Big),\qquad \hat{{\Pi}}(y):=y\log y-(y-1)\quad \mbox{for }y>0.
\eeq
Notice that $\hat{{\Pi}} $ is smooth and non-negative on $(0,\infty)$, and  $\hat{\Pi}(y)=0$ if and only if $y=1$, since $\hat{{\Pi}} $ is strictly convex, and $y=1$ is the only critical point.\\
We will first estimate $\hat{{\Pi}}(y)$ as follows:\\
For any fixed $\delta\in(0,1/2]$, since $\hat{{\Pi}}'(y)=\log y <0$ for $0<y\le 1-\delta$, we have
\beq\label{inst-est0}
0<\hat{{\Pi}}(1-\delta) \le \hat{{\Pi}}(y)\le \lim_{s\rightarrow 0+}\hat{{\Pi}}(s)=1,\quad \forall y\in(0,1-\delta].
\eeq
On the other hand, using
$$\sup_{y\geq 1+\delta}\frac{y}{1+y\log y}<1,$$
we have a small constant $\kappa>0$ such that
\beq\label{inst-est1}
\kappa(1+y\log y)\le  \hat{{\Pi}}(y),\quad \forall y\ge 1+\delta.
\eeq
Moreover, since
\[
\hat{{\Pi}}(y) \le y\log y,\quad \forall y\ge 1+\delta,
\]
this together with \eqref{inst-est0} and \eqref{inst-est1} implies that 
there exists $C=C(\delta) >0$ such that
\[
\frac{1}{C }(1+y\log^+ y)\leq \hat{{\Pi}}(y)\leq {C }(1+y\log^+ y) \quad \mbox{for any } |y-1|\geq\delta.
\] 
Hence, this together with \eqref{newdef-pi} and $\frac{n_-}{2}<n_2<n_-$ implies \eqref{Q_global_}.
}

$\bullet$ {\it proof of \eqref{upper_1/2}} : 
Likewise, since there exists a constant $C=C(\delta) >0$ such that
 $$C^{-1} |y-1|\leq  \hat{\Pi}(y) \le C|y-1|^2 \quad \mbox{for any } |y-1|\geq\delta,$$
we have \eqref{upper_1/2}.

$\bullet$ {\it proof of \eqref{Pi-mono}} : 
Since $z\mapsto \Pi(z|y)$ is convex in $z>0$ and zero at $z=y$, $z\mapsto \Pi(z|y)$ is increasing in $|z-y|$, which implies \eqref{Pi-mono}.

\end{proof}
\begin{remark}\label{rem_c_1_indep}
$C_{1}$ is independent of $\delta\in(0,1/2]$ while $C_{2}$ blows up as $\delta$ goes to zero.
\end{remark}

\subsubsection{Local inequalities on the relative quantity ${\Pi}(\cdot|\cdot)$}
We  present now  some local estimates on ${\Pi}(n_1|n_2)$ for $|n_1- n_2|\ll 1$, based on  Taylor expansions. The specific  coefficients of the estimates will be crucially used in our local analysis.  
\begin{lemma}\label{lem:local}
For a given constant $n_->0$, there exist positive constants $C$ and $\delta_*$ such that for any $0<\delta<\delta_*$, the following is true.\\
For any $(n_1, n_2)\in \bbr_+^2$ satisfying $\Big|\frac{n_1}{n_2}-1\Big|<\delta$ and  $\frac{n_-}{2}<n_2<2n_-$, 
\beq\label{P3rd}
 \Pi(n_1|n_2) \ge \frac{n_2}{2}\Big[\Big(\frac{n_1}{n_2}-1\Big)^2-\frac{1}{3}\Big(\frac{n_1}{n_2}-1\Big)^3\Big].
\eeq
\beq\label{P2nd}
\Pi(n_1|n_2) \le \frac{n_2}{2}\Big[\Big(\frac{n_1}{n_2}-1\Big)^2-\frac{1}{3}\Big(\frac{n_1}{n_2}-1\Big)^3\Big]  +C\delta \Big|\frac{n_1}{n_2}-1\Big|^3.
\eeq
\end{lemma}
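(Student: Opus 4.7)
The plan is to reduce everything to a Taylor expansion of the scalar function $\hat{\Pi}(y):=y\log y-(y-1)$ around $y=1$, using the identity
\[
\Pi(n_1|n_2)=n_2\,\hat{\Pi}\!\left(\frac{n_1}{n_2}\right)
\]
already recorded in the proof of Lemma \ref{cor_Q}. Setting $h:=\frac{n_1}{n_2}-1$, the hypothesis $|n_1/n_2-1|<\delta$ becomes $|h|<\delta$, and the bounds on $n_2$ will only be used at the end to absorb multiplicative factors into $C$.

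First, I would compute the derivatives of $\hat{\Pi}$: $\hat{\Pi}(1)=0$, $\hat{\Pi}'(y)=\log y$ so $\hat{\Pi}'(1)=0$, $\hat{\Pi}''(y)=1/y$ so $\hat{\Pi}''(1)=1$, $\hat{\Pi}'''(y)=-1/y^2$ so $\hat{\Pi}'''(1)=-1$, and $\hat{\Pi}^{(4)}(y)=2/y^3>0$ on $(0,\infty)$. Then Taylor's formula with Lagrange remainder at order four yields
\[
\hat{\Pi}(1+h)=\frac{h^2}{2}-\frac{h^3}{6}+\frac{h^4}{12\,\theta^3},
\]
for some $\theta$ strictly between $1$ and $1+h$.

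For the lower bound \eqref{P3rd}, the remainder $\frac{h^4}{12\theta^3}$ is nonnegative (since $\theta>0$), so $\hat{\Pi}(1+h)\ge \frac{h^2}{2}-\frac{h^3}{6}$; multiplying by $n_2$ gives exactly \eqref{P3rd}. For the upper bound \eqref{P2nd}, I would choose $\delta_*$ small enough (say $\delta_*\leq 1/4$) so that $\theta\in(1-\delta_*,1+\delta_*)$ stays uniformly away from $0$; then $\theta^{-3}$ is bounded by an absolute constant, and
\[
\frac{h^4}{12\,\theta^3}\le C_0\,|h|^4\le C_0\,\delta\,|h|^3,
\]
using $|h|<\delta$. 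Multiplying by $n_2$ and invoking $n_2<2n_-$ to absorb the prefactor into the final constant $C=C(n_-)$ produces \eqref{P2nd}.

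There is no real obstacle here: once one identifies $\Pi(n_1|n_2)=n_2\hat{\Pi}(n_1/n_2)$, the cubic Taylor polynomial of $\hat{\Pi}$ at $1$ is exactly $\frac{h^2}{2}-\frac{h^3}{6}$, and the sign of the fourth derivative is what forces the asymmetry between the two bounds (clean lower bound, quartic controlled upper bound). The only small subtlety is choosing $\delta_*$ so that the Lagrange point $\theta$ remains bounded away from $0$, which lets the $h^4$ remainder be traded for $\delta|h|^3$; the hypothesis $\frac{n_-}{2}<n_2<2n_-$ serves only to make the multiplicative constant $C$ depend solely on $n_-$.
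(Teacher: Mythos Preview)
Your proof is correct and follows essentially the same approach as the paper: reduce to $\hat{\Pi}(y)=y\log y-(y-1)$ via $\Pi(n_1|n_2)=n_2\hat{\Pi}(n_1/n_2)$ and Taylor-expand at $y=1$. Your version is in fact slightly cleaner, since by stopping at the fourth-order Lagrange remainder $\frac{h^4}{12\theta^3}\ge 0$ you get the lower bound \eqref{P3rd} for free, whereas the paper expands one order further and must take $\delta_*$ small to control the fifth-order term against the positive quartic; otherwise the arguments are identical.
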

\begin{proof}
Since the function $\hat{\Pi}(y):=y\log y-(y-1)$ is smooth for $y>0$, we apply Taylor theorem to the function $\hat{\Pi}$. That is, using
\[
\hat{\Pi}'(y)=\log y,\quad \hat{\Pi}''(y)=\frac{1}{y},\quad \hat{\Pi}'''(y)=-\frac{1}{y^2},\quad \hat{\Pi}''''(y)=\frac{2}{y^3},
\]
for any $0<\delta<1$ and any $y\in[1-\delta,1+\delta]$, there exists $y_*$ between $1$ and $y$ such that
\[
\hat{\Pi}(y) =\frac{1}{2}(y-1)^2-\frac{1}{6}(y-1)^3+\frac{1}{12 }(y-1)^4+\hat{\Pi}^{(5)}(y_*) \frac{(y-1)^5}{5!}.
\]
Then we take $\delta_*$ small enough such that for any $0<\delta<\delta_*$ and $y\in[1-\delta,1+\delta]$, we have
\[
\frac{1}{2}(y-1)^2-\frac{1}{6}(y-1)^3 \le \hat{\Pi}(y) \le \frac{1}{2}(y-1)^2-\frac{1}{6}(y-1)^3 + C\delta |y-1|^3.
\]
Since $\Pi(n_1|n_2) =n_2\hat{\Pi}(\frac{n_1}{n_2})$, for any $(n_1, n_2)\in \bbr_+^2$ satisfying $\Big|\frac{n_1}{n_2}-1\Big|<\delta$, 
\[
\frac{n_2}{2}\Big[\Big(\frac{n_1}{n_2}-1\Big)^2-\frac{1}{3}\Big(\frac{n_1}{n_2}-1\Big)^3\Big] \le \Pi(n_1|n_2) \le \frac{n_2}{2}\Big[\Big(\frac{n_1}{n_2}-1\Big)^2-\frac{1}{3}\Big(\frac{n_1}{n_2}-1\Big)^3\Big]  +C\delta \Big|\frac{n_1}{n_2}-1\Big|^3,
\]
which completes the proof.
\end{proof}

\section{Proof of Theorem \ref{main_thm}}\label{sec_3}
Let $
 n_->0$ and  $ q_-\in\mathbb{R}$.   
 Consider $\lambda>0 $ 
and $\vep\in(0,n_-)$. Define $n_+>0$   by
$ \vep=(n_--n_+)$.
Let $\tilu:=\bmat\tiln \\ \tilq\emat$ be a  traveling wave of \eqref{nq} with the boundary condition \eqref{bdry_cond} and  with the speed $\s>0$ from \eqref{sigma_eq}. 
We define
 $a:\mathbb{R}\to\mathbb{R}_{>0}$ by \eqref{def_a}. 

\subsection{Construction of the shift $X$}\label{subsec_existence_shift}

For any fixed $\eps>0$, we consider a continuous function $\Phi_\eps$ defined by
\be\la{def_Phi}
\Phi_\eps (y)=
\left\{ \begin{array}{ll}
      \frac{1}{\varepsilon^2},\quad \mbox{if}~ y\le -\varepsilon^2, \\
      -\frac{1}{\varepsilon^4}y,\quad \mbox{if} ~ |y|\le \varepsilon^2, \\
       -\frac{1}{\varepsilon^2},\quad \mbox{if}  ~y\ge \varepsilon^2. \end{array} \right.
\ee
For a given solution $U\in\mathcal{X}_T$, we define a shift function $X(t)$ as the solution of the nonlinear ODE:
\be\la{def_shift}
\left\{ \begin{array}{ll}
        \dot X(t) = \Phi_\eps (\mathcal{Y}(U^X)) \Big(2|\mathcal{I}^{bad}(U^X)|+1 \Big)\quad\mbox{for \textit{a.e.} } t\in[0,T],\\
       X(0)=0,\end{array} \right.
\ee
where the functionals $\mathcal{Y}$ and $\mathcal{I}^{bad}$ are as in \eqref{badgood}.
 
Then, for any  solution $U\in\mathcal{X}_T$ for some $T>0$,  an absolutely continuous shift $X$ satisfying \eqref{def_shift} exists on $[0,T]$ and is unique.
Indeed, if we call the right-hand side of the ODE by $F(t,X)$, then it can be shown that
there exist functions $a,b\in L^2(0,T)$ such that
$$\sup_{x\in\bbr }|F(t,x)|\leq a(t)\quad \mbox{and}\quad
\sup_{x\in\bbr }|D_xF(t,x)|\leq b(t)
\quad \mbox{for } t\in[0,T] $$
  by using the information from $U\in\mathcal{X}_T$   together with the change of variables $\xi\mapsto \xi-X(t)$ as in \eqref{move-X}. Then we obtain the existence of a local solution by Picard's iteration argument, and it is extended up to time $T$ thanks to the estimate $a,b\in L^2(0,T)$. Uniqueness also follows (see Appendix \ref{appendix_shift} for the detail).\\

The following is the main proposition as a corner stone of proof of Theorem \ref{main_thm}.
\begin{proposition}\label{main_prop}
There exist $\delta_0\in(0,1/2)$ and $\delta_1\in(0,1/2)$ such that 
if positive constants $\vep$ and $\lam$ satisfy $\delta_0^{-1}\vep<\lam<\delta_0$, then
for any traveling wave $\tilde U:={\tilde n \choose \tilde q}$    in \eqref{NQ}  
 and
for any $U\in\mathcal{H}$ satisfying $|\mathcal{Y}(U)|\leq\vep^2$, we have
\be\la{est_main_prop}
\mathcal{R}(U):= -\frac{1}{\varepsilon^4}|\mathcal{Y}(U)|^2 + \mathcal{B}_{\delta_1}(U)+\delta_0\frac{\eps}{\lambda} |\mathcal{B}_{\delta_1}(U)|
-\mathcal{G}_{\delta_1}(U)+\delta_0 \mathcal{D}(U) \le 0,
\ee
where the functional $\mathcal{Y}$ is as in \eqref{badgood}, $\mathcal{B}_{\delta_1}$ and $\mathcal{G}_{\delta_1}$ are as in \eqref{bg-max}, and $
\mathcal{D}$ is defined by 
\begin{align} 
\begin{aligned}\label{d_only_a}
&\mathcal{D}(U):=  \intr a  n\Big| \pa_\xi \Big(\log\frac{n}{\tiln}\Big)\Big|^2 d\xi.
\end{aligned}
\end{align}  

\end{proposition}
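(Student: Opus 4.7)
The plan is to follow the Kang--Vasseur scheme by treating separately the near-shock region $\Omega_-:=\{\xi:|n(\xi)/\tiln(\xi)-1|\le\delta_1\}$ and the far region $\Omega_+:=\bbr\setminus\Omega_-$. By Lemma \ref{lem_max}, $\mathcal{B}_{\delta_1}-\mathcal{G}_{\delta_1}$ already has the favorable perfect-square structure obtained by maximizing in $q-\tilq$ over $\Omega_-$, so on $\Omega_-$ the only remaining unknown is essentially $n-\tiln$, while on $\Omega_+$ the full $\sigma\int a'|q-\tilq|^2/2$ is still available to absorb the $(q-\tilq)$-linear bad terms by Cauchy--Schwarz.

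First, I would rescale. Lemma \ref{til_prop} gives $|\tiln'|\sim \eps^2 e^{-\eps|\xi|/\sigma_-}$ and $a'=-(\lambda/\eps)\tiln'$, so substituting $z=\eps\xi/\sigma_-$ turns every occurrence of $a'$ into something of order $\eps\lambda$ in the new variable and rescales $\mathcal{Y}$ into a quantity whose natural size is $\eps^2$ (consistent with the hypothesis $|\mathcal{Y}|\le\eps^2$). Under this rescaling, $\lambda/\eps$ is moderate, $\delta_0$ is the overall smallness parameter, and the hierarchy $\eps\ll \eps/\lambda\ll \lambda\ll 1$ stays transparent.

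On $\Omega_-$ I would then Taylor-expand $\Pi(n|\tiln)$ and $\varphi(n)$ to third order in $y:=n/\tiln-1$ via Lemma \ref{lem:local}. The terms of $\mathcal{B}_{\delta_1}$ restricted to $\Omega_-$ reduce to a quadratic form in $y$ weighted by $a'\tilq$ (recall $\tilq'=-\tiln'/\sigma$ so $\tilq$ is essentially linear in the shock variables) plus cubic corrections of size $\delta_1|y|^3$. Matching this quadratic form against $\s\int a' \Pi(n|\tiln)$ in $\mathcal{G}_{\delta_1}$ is precisely the setting where a non-linear Poincaré-type inequality (Lemma \ref{lem_poincare}, following Kang--Vasseur \cite{KV_arxiv}) is designed to yield
\[
(\text{quadratic part of }\mathcal{B}_{\delta_1})\le (\text{quadratic part of }\mathcal{G}_{\delta_1})+\tfrac{1}{\eps^4}|\mathcal{Y}(U)|^2 - \eta\,(\text{quadratic part of }\mathcal{G}_{\delta_1})
\]
for some $\eta>0$, once the small-shock expansion in $\eps$ is performed. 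The cubic error $\delta_1|y|^3$ is then absorbed into the spare fraction $\eta$ of the good term by taking $\delta_1$ small, and into $\delta_0 \mathcal{D}(U)$ via the logarithmic Sobolev/Poincaré relation between $\|y\|_{L^\infty(\Omega_-)}$ and $\|\pa_\xi\log(n/\tiln)\|_{L^2}$. The extra $\delta_0(\eps/\lambda)|\mathcal{B}_{\delta_1}|$ is harmless because $\delta_0\eps/\lambda\ll 1$ multiplied by something that, by the same expansion, is bounded by the quadratic part of $\mathcal{G}_{\delta_1}$.

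On $\Omega_+$, I would use the coarse bound $\Pi(n|\tiln)\ge C_3^{-1}|n-\tiln|$ from \eqref{upper_1/2} together with the non-maximized good term $\s\int_{\Omega_+} a'|q-\tilq|^2/2$ in $\mathcal{G}_{\delta_1}$. Each of the $(q-\tilq)$-linear bad pieces on $\Omega_+$ is dispatched by Cauchy--Schwarz: $(n-\tiln)(q-\tilq)$ type terms split into $a'|q-\tilq|^2/2$ plus $a'(n-\tiln)^2$, and the latter is controlled by $a'\Pi(n|\tiln)$ thanks to \eqref{upper_1/2}. The parabolic-type bad term $\int a'(1+\tfrac{\eps}{\lambda}\tfrac{a}{\tiln}) n\log(n/\tiln)\pa_\xi\log(n/\tiln)\,d\xi$ is controlled by $\mathcal{D}(U)$ and $\mathcal{G}_{\delta_1}$ using Cauchy--Schwarz, the prefactor $\eps/\lambda\ll 1$ providing the necessary smallness. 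Finally, $\eps\int a''(a/\tiln)\Pi(n|\tiln)\,d\xi$ is of higher order because $|a''|\lesssim (\lambda/\eps)\eps|\tiln'|=\lambda|\tiln'|$, which is dominated by the corresponding term of $\mathcal{G}_{\delta_1}$ when $\lambda<\delta_0$.

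The main obstacle I anticipate is the calibration on $\Omega_-$: the Poincaré-type inequality must produce exactly the coefficient $\eps^{-4}$ in front of $|\mathcal{Y}|^2$, and the resulting slack in $\mathcal{G}_{\delta_1}$ must simultaneously beat the cubic Taylor remainder, the $\delta_0(\eps/\lambda)|\mathcal{B}_{\delta_1}|$ contribution, and the loss coming from the degenerate parabolic structure (no direct diffusion on $q$). Because the maximization in Lemma \ref{lem_max} is valid only on $\Omega_-$, one must also glue the two regimes cleanly at the boundary $\{|n/\tiln-1|=\delta_1\}$ without producing any uncontrolled boundary contribution; here the monotonicity inequality \eqref{Pi-mono} is useful to replace $n$ by its threshold value on the boundary when transferring terms between regions.
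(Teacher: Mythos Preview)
Your overall architecture---split into $\Omega_-$ and $\Omega_+$, Taylor-expand and feed the near region into the nonlinear Poincar\'e inequality, and absorb the far region into the good terms---matches the paper. However, the $\Omega_+$ step as you describe it has a genuine gap.

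You propose to dispatch the $(q-\tilq)$-linear bad pieces on $\Omega_+$ by Cauchy--Schwarz, producing $a'(n-\tiln)^2$, which you say ``is controlled by $a'\Pi(n|\tiln)$ thanks to \eqref{upper_1/2}.'' But \eqref{upper_1/2} gives the \emph{opposite} direction: $\Pi(n|\tiln)\le C|n-\tiln|^2$, and for large $n$ one has $\Pi(n|\tiln)\sim n\log n\ll n^2\sim(n-\tiln)^2$. The leading piece of $\mathcal{B}_2^O$ is worse still: Cauchy--Schwarz on $\int_{\Omega^c}a'\,\Pi(n|\tiln)(q-\tilq)\,d\xi$ produces $\int_{\Omega^c}a'\,\Pi(n|\tiln)^2\,d\xi$, of order $\int a'(n\log n)^2$, which $\sigma\int a'\Pi(n|\tiln)$ cannot absorb. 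Likewise, in $\mathcal{B}_3$ the prefactor $(1+\tfrac{\eps}{\lambda}\tfrac{a}{\tiln})$ is $\sim 1$, not $\sim\eps/\lambda$, so no smallness comes from there.

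The paper closes this by a pointwise mechanism you have not invoked. From $|\mathcal{Y}(U)|\le\eps^2$ one first extracts (Lemma~\ref{lem_ey}) the smallness $\int a'\eta(U|\tilu)\,d\xi\le C\eps^2/\lambda$, hence there is a reference point $\xi_0\in[-1/\eps,1/\eps]$ where $n/\tiln$ is close to $1$; integrating $\partial_\xi\sqrt{n/\tiln}$ from $\xi_0$ then gives (Lemma~\ref{lem_pointwise}) the pointwise bound $|n(\xi)-\tiln(\xi)|\le C(\eps^{-1}+|\xi|)\,\mathcal{D}(n)$ on $\Omega^c$. Combined with the exponential decay of $a'$ this yields (Lemma~\ref{lem_B_log}) estimates of the type $\int_{\Omega^c}a'(1+n\log^+\tfrac{n}{\tiln})\,d\xi\le C\sqrt{\eps/\lambda}\,\mathcal{D}(n)$, which is what actually controls all far-region bad terms by a \emph{small} multiple of $\mathcal{D}$ rather than by $\mathcal{G}_2$. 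The proof is then finished by a dichotomy on $\mathcal{D}(U)$: if $\mathcal{D}(U)\ge 4C^*\eps^2/\lambda$ the diffusion alone beats $|\mathcal{B}|$; if $\mathcal{D}(U)\le 4C^*\eps^2/\lambda$ one also controls the pieces $\mathcal{Y}_b,\mathcal{Y}_l,\mathcal{Y}_s$ in the decomposition $\mathcal{Y}=\mathcal{Y}_g+\mathcal{Y}_b+\mathcal{Y}_l+\mathcal{Y}_s$ and reduces to Proposition~\ref{prop_near} applied to the truncation $\bar n$. Without the pointwise estimate and this dichotomy, your $\Omega_+$ argument does not go through for large perturbations.
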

 
We will first show how this proposition implies Theorem~\ref{main_thm}.

\subsection{Proof of Theorem~\ref{main_thm} from Proposition \ref{main_prop}}
\ \\ 

In order to prove the contraction \eqref{ineq_contraction} in Theorem \ref{main_thm}, by \eqref{eq_rel1} and \eqref{def_shift}, it is enough to show that for almost every   $t\in[0,T]$,
\ben
\Phi_\eps  (\mathcal{Y}(U^X)) \Big(2|\mathcal{I}^{bad}(U^X)|+1 \Big) \mathcal{Y}(U^X) +\mathcal{I}^{bad}(U^X)-\mathcal{I}^{good}(U^X)\le0.
\een 
For every $U\in \mathcal{H}$ we define 
\ben\label{rhs}
\mathcal{F}(U):=\Phi_\eps (\mathcal{Y}(U)) \Big(2|\mathcal{I}^{bad}(U)|+1 \Big)Y(U) +\mathcal{I}^{bad}(U)-\mathcal{I}^{good}(U).
\een
Since it follows from \eqref{def_Phi} that
\ben
\Phi_\eps (\mathcal{Y}) \Big(2|\mathcal{I}^{bad}|+1 \Big)\mathcal{Y}\le
\left\{ \begin{array}{ll}
     -2|\mathcal{I}^{bad}|,\quad \mbox{if}~  |\mathcal{Y}|\ge \varepsilon^2,\\
     -\frac{1}{\varepsilon^4}\mathcal{Y}^2,\quad  \mbox{if}~ |\mathcal{Y}|\le \varepsilon^2. \end{array} \right.
\een
we first find that for all $U\in \mathcal{H}$ satisfying  $|\mathcal{Y}(U)|\ge \eps^2 $, 
\[
\mathcal{F}(U) \le -|\mathcal{I}^{bad}(U)|-\mathcal{I}^{good}(U) \le 0.
\]

On the other hand, using \eqref{eq_relative}, we find that for any $\delta>0$ and any $U\in \mathcal{H}$ satisfying  $|\mathcal{Y}(U)|\le \eps^2 $, 
\[
\mathcal{F}(U) \le -\frac{1}{\varepsilon^4}\mathcal{Y}(U)^2 + \mathcal{B}_{\delta}(U)-\mathcal{G}_{\delta}(U).
\]
Then, Proposition \ref{main_prop} implies that for any $U\in \mathcal{H}$ satisfying  $|\mathcal{Y}(U)|\le \eps^2 $, 
$$
\mathcal{F}(U) \le -\delta_0\frac{\eps}{\lambda} |\mathcal{B}_{\delta_1}(U)| -\delta_0 \mathcal{D}(U)  \le 0.
$$
Therefore, using the above estimates with $U=U^X$ and $\delta_0<\frac{1}{2}$, we find that for a.e. $t\in[0,T]$,  
\begin{align}
\begin{aligned}\label{intemp}
&\frac{d}{dt}\int_{\bbr} a\eta(U^X|\tilu) d\xi +\delta_0 \mathcal{D}(U^X)= \mathcal{F}(U^X) + \delta_0 \mathcal{D}(U^X)\\
&\qquad \leq -|\mathcal{I}^{bad}(U^X)| {\mathbf 1}_{\{|\mathcal{Y}(U^X)|\ge \eps^2\}} -\delta_0\frac{\eps}{\lambda} |\mathcal{B}_{\delta_1}(U^X)| {\mathbf 1}_{\{|\mathcal{Y}(U^X)|\le \eps^2\}} \le 0,
\end{aligned}
\end{align}
which together with the initial condition $ \int_{\bbr} \eta(U_0|\tilu) d\xi <\infty$ yields that
\beq\label{tem_con}
\int_{\bbr} a\eta(U^X|\tilu) d\xi + \delta_0 \int_0^t \mathcal{D}(U^X) ds \le  \int_{\bbr} a \eta(U_0|\tilu) d\xi. 
\eeq

To conclude \eqref{ineq_contraction}, we recover $x$ variable from $\xi$ variable (see Subsection \ref{moving_frame}).\\
Hence  we have \eqref{ineq_contraction}  by redefining $X(t)$ by $(\s t-X(t))$.\\

Next, to estimate $|\dot X|$, we first observe that it follows from \eqref{def_Phi} and \eqref{def_shift} that
\be\label{est_shift0}
|\dot X|\le \frac{1}{\varepsilon^2}(2|\mathcal{I}^{bad}(U^X)|+1).
\ee 
Since \eqref{intemp} yields
\[
\frac{d}{dt}\int_{\bbr} a\eta(U^X|\tilu) d\xi + |\mathcal{I}^{bad}(U^X)| {\mathbf 1}_{\{|\mathcal{Y}(U^X)|\ge \eps^2\}} + \delta_0\frac{\eps}{\lambda} |\mathcal{B}_{\delta_1}(U^X)| {\mathbf 1}_{\{|\mathcal{Y}(U^X)|\le \eps^2\}}   \le 0,
\]
we have (using $ \|a\|_{L^\infty}\le 2$ by $\lambda< \delta_0<\frac{1}{2}$)
\beq\label{rem_est}
\int_0^{\infty}\Big( |\mathcal{I}^{bad}(U^X)| {\mathbf 1}_{\{|\mathcal{Y}(U^X)|\ge \eps^2\}} + \delta_0\frac{\eps}{\lambda} |\mathcal{B}_{\delta_1}(U^X)| {\mathbf 1}_{\{|\mathcal{Y}(U^X)|\le \eps^2\}} \Big)dt \le  2\int_{\bbr} \eta(U_0|\tilu) d\xi.
\eeq
Notice that \eqref{eq_relative} together with the definitions of $\mathcal{I}^{good}$ and $\mathcal{G}_{\delta_1}$ yields 
\begin{align*}
\begin{aligned}
&|\mathcal{I}^{bad}(U^X)|\\
&=|\mathcal{I}^{bad}(U^X)| {\mathbf 1}_{\{|\mathcal{Y}(U^X)|\ge \eps^2\}} +|\mathcal{I}^{bad}(U^X)| {\mathbf 1}_{\{|\mathcal{Y}(U^X)|\le \eps^2\}}  \\
&=|\mathcal{I}^{bad}(U^X)| {\mathbf 1}_{\{|\mathcal{Y}(U^X)|\ge \eps^2\}} +| \mathcal{I}^{good}(U^X) + \mathcal{B}_{\delta_1}(U^X)-\mathcal{G}_{\delta_1}(U^X)| {\mathbf 1}_{\{|\mathcal{Y}(U^X)|\le \eps^2\}}\\
&\le |\mathcal{I}^{bad}(U^X)| {\mathbf 1}_{\{|\mathcal{Y}(U^X)|\ge \eps^2\}}  +| \mathcal{B}_{\delta_1}(U^X)|{\mathbf 1}_{\{|\mathcal{Y}(U^X)|\le \eps^2\}}\\
&\quad + \frac{|\sigma| }{2}\intr |a'| \Big| \big(q^X-\tilq \big)^2 - \big(q^X-\tilq +\varphi(n^X)\big)^2 \Big| {\mathbf 1}_{\{|(n^X/\tiln)-1| \le \delta_1\}} d\xi\\
&\le |\mathcal{I}^{bad}(U^X)| {\mathbf 1}_{\{|\mathcal{Y}(U^X)|\ge \eps^2\}}  +| \mathcal{B}_{\delta_1}(U^X)|{\mathbf 1}_{\{|\mathcal{Y}(U^X)|\le \eps^2\}}\\
&\quad  +
C
\intr |a'| \Big( \big|q^X-\tilq \big|^2 + \Pi(n^X|\tilde n)^2 +|n^X-\tilde n|^2 \Big) {\mathbf 1}_{\{|(n^X/\tiln)-1| \le \delta_1\}} d\xi.
\end{aligned}
\end{align*}
Since \eqref{Q_loc_} implies that
\[
|(n/\tiln)-1| \le \delta_1 \quad \Rightarrow \quad \Pi(n|\tilde n) \le C_1|n-\tilde n|^2 \le C_1 (\delta_1 n_-)^2,
\]
we use \eqref{Q_loc_},   $a' \leq C \delta_0$, $\delta_0\le \frac{1}{2}\le a$ and by \eqref{tilde_estimate} and \eqref{prop_a}, to have
\[
|\mathcal{I}^{bad}(U^X)| \le |\mathcal{I}^{bad}(U^X)| {\mathbf 1}_{\{|\mathcal{Y}(U^X)|\ge \eps^2\}}  +| \mathcal{B}_{\delta_1}(U^X)|{\mathbf 1}_{\{|\mathcal{Y}(U^X)|\le \eps^2\}}+C\int_{\bbr} a\eta(U^X|\tilu) d\xi.
\] 
Therefore, it follows from \eqref{tem_con}, \eqref{est_shift0} and \eqref{rem_est} that
\ben\label{est_shift_proof1}
|\dot X|\le \frac{2}{\varepsilon^2} \Big( |\mathcal{I}^{bad}(U^X)| {\mathbf 1}_{\{|\mathcal{Y}(U^X)|\ge \eps^2\}}  +| \mathcal{B}_{\delta_1}(U^X)|{\mathbf 1}_{\{|\mathcal{Y}(U^X)|\le \eps^2\}} \Big)+\frac{C}{\varepsilon^2} \int_{\bbr} \eta(U_0|\tilu) d\xi +\frac{1}{\varepsilon^2},
\een
where
\ben\label{est_shift_proof2}
\int_0^{T}\Big( |\mathcal{I}^{bad}(U^X)| {\mathbf 1}_{\{|\mathcal{Y}(U^X)|\ge \eps^2\}} +  |\mathcal{B}_{\delta_1}(U^X)| {\mathbf 1}_{\{|\mathcal{Y}(U^X)|\le \eps^2\}} \Big)dt  \le \frac{2\lambda}{\delta_0\eps}\intr \eta(U_0|\tilu) d\xi.
\een  
Hence  we have \eqref{est_shift}  by redefining $X(t)$ by $(\s t-X(t))$ as mentioned above.\\

The remaining part is dedicated to prove  Proposition \ref{main_prop}. In Section \ref{sec_near}, we study behaviour of a scalar function in a certain class near a given traveling wave $\tiln$. Then, in Section \ref{proof_main_prop}, we construct a truncation $\bar{V}=\bmat \bar{m} \\ q\emat$ for   $V\in \mathcal{H}$ with   $|\mathcal{Y}(V)|\le \varepsilon^2 $ so that the truncated function $\bar{m}$ lies on the class covered in Proposition \ref{prop_near} while the error between $\bar{V}$ and $V$ in our functionals can be estimated in a proper way. It will give us  Proposition \ref{main_prop}.

\section{Estimates near the traveling wave}\la{sec_near}

\subsection{Expansion in the size of the traveling wave.}
We define the following functions:
\begin{align}
\begin{aligned}\label{badgood-n}
&\varphi(n):= \frac{1}{\sigma}\Big(\Pi(n|\tiln) +\left(1+\frac{\eps}{\lambda} \frac{a}{\tiln} \right)(n-\tiln) \Big),\\
&\mathcal{Y}_g(n):=-{\intr}a'\Big(\frac{|{{\varphi}}({n})|^2}{2}+{\Pi}(n|\tiln)\Big)d\xi -\frac{\vep}{\lambda} {\intr}aa'\Big(\frac{n-\tiln}{\tiln}+\frac{{{\varphi}}(n)}{\s}\Big)d\xi,\\
&\mathcal{I}_1(n):= -{\intr} a' \tilq {\Pi}(n|\tiln)d\xi  -\el{\intr}a''\frac{a}{\tiln} \Pi(n|\tiln) d\xi,\\
&\mathcal{I}_2(n):=\frac{\sigma}{2} \intr a' |\varphi(n)|^2 d\xi,\\
&\mathcal{G}_2(n):= \s\intr a' {\Pi}(n | \tiln) d\xi,\\
&\mathcal{D}(n):=\intr a n\Big| \pa_\xi \Big(\log\frac{n}{\tiln}\Big)\Big|^2 d\xi.
\end{aligned}
\end{align}

\begin{proposition}\label{prop_near}
For any $K>0$, there exist $\delta_{1}\in(0,\frac{1}{2})$ such that  for any $\delta_1^{-1}\eps<\lambda<\delta_1$ and for  any $\delta\in(0,\delta_{1})$,   
 the following is true:\\
For any function $n:\bbr\to \bbr^+$ such that  if
\be\label{condn_for_main_prop}
 \Big(|\mathcal{D}(n)|+\intr a'{\Pi}(n|\tiln)d\xi\Big)
 \mbox{ is finite},\quad
|\mathcal{Y}_g(n)|\leq K\frac{\varepsilon^2}{\lambda},\qquad \mbox{and } \quad\|\frac{n}{\tiln}-1\|_{L^\infty(\bbr)}\leq \delta_{1},
\ee
then
\begin{align*}
\begin{aligned}\label{ineq_taylor}
&\mathcal{R}_{\eps,\delta}(n):=-\frac{1}{\eps\delta}|\mathcal{Y}_g(n)|^2 +\left(\mathcal{I}_1(n)+\mathcal{I}_2(n)\right)+\delta\left(\frac{\eps}{\lambda}\right) \left(|\mathcal{I}_1(n)|+|\mathcal{I}_2(n)|\right)\\
&\qquad\qquad\qquad -\left(1-\delta\left(\frac{\eps}{\lambda}\right)\right)\mathcal{G}_2(n)-(1-\delta)\mathcal{D}(n)\le 0.
\end{aligned}
\end{align*}
\end{proposition}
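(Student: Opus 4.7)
My proof plan follows the Kang–Vasseur expansion strategy sketched in Subsection~\ref{sub_idea}. Introduce the normalized deviation $w(\xi):=n(\xi)/\tiln(\xi)-1$, which by hypothesis satisfies $\|w\|_{L^\infty(\bbr)}\le\delta_1$. Using $\Pi(n|\tiln)=\tiln\,\hat\Pi(1+w)$ and the Taylor bounds of Lemma~\ref{lem:local}, one may write $\Pi(n|\tiln)=\tfrac{\tiln}{2}w^2-\tfrac{\tiln}{6}w^3+O(\delta_1|w|^3)$, and correspondingly $\varphi(n)=\tfrac{1}{\sigma}\bigl(1+\tfrac{\eps}{\lambda}\tfrac{a}{\tiln}\bigr)\tiln w+O(w^2)$. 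Each functional in \eqref{badgood-n} then becomes a quadratic form in $w$ plus higher-order corrections, with coefficients controlled by $n_-$ and $\sigma_-$ modulo errors of size $\eps$ and $\eps/\lambda$, using from Lemma~\ref{til_prop} that $\|\tiln-n_-\|_\infty\le\eps$, $\sigma=\sigma_-+O(\eps)$, $a\in[1,1+\lambda]$, $a'=-(\lambda/\eps)\tiln'$, $\int a'\,d\xi=\lambda$, and $|a''|\le C\lambda|\tiln'|$.

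To leading order I expect $\mathcal{I}_2(n)\approx \frac{1}{2\sigma}\int a'\bigl(1+\tfrac{\eps a}{\lambda\tiln}\bigr)^2\tiln^2 w^2\,d\xi$ and $\mathcal{G}_2(n)\approx \frac{\sigma}{2}\int a'\tiln\, w^2\,d\xi$, while $|\mathcal{I}_1(n)|$ is $O(\lambda)$ times the weighted $L^2$-norm of $w$ (the $a''$ piece being smaller by a factor $\eps/\lambda$ thanks to $|\tiln''|\le C\eps|\tiln'|$). The diffusion $\mathcal{D}(n)\approx\int a\tiln|\pa_\xi w|^2\,d\xi$ controls the weighted $H^1$-seminorm. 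At the same leading order, $\mathcal{Y}_g(n)$ is essentially a weighted zero-th moment of $w$, so the penalty $-\mathcal{Y}_g(n)^2/(\eps\delta)$ provides sharp control on the projection of $w$ onto the constant mode against $a'\,d\xi$.

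With these expansions in hand, the heart of the argument is the generic nonlinear Poincaré-type inequality of Kang--Vasseur (Lemma~\ref{lem_poincare}), applied to $w$ against the weighted measure $a'(\xi)\,d\xi$. That inequality bounds $\int a'\tiln^2 w^2\,d\xi$ in terms of a $\mathcal{D}$-type dissipation plus the squared weighted mean of $w$; the constraint $|\mathcal{Y}_g(n)|\le K\eps^2/\lambda$ converts the mean control into a bound via the $\mathcal{Y}_g^2/(\eps\delta)$ penalty. Schematically this yields
$$\bigl(1+C\tfrac{\eps}{\lambda}\bigr)\bigl(|\mathcal{I}_1(n)|+|\mathcal{I}_2(n)|\bigr)\ \le\ \bigl(1-C\tfrac{\eps}{\lambda}\bigr)\mathcal{G}_2(n)+\mathcal{D}(n)+\tfrac{1}{\eps\delta}\mathcal{Y}_g(n)^2,$$
which rearranges to the claimed non-positivity of $\mathcal{R}_{\eps,\delta}(n)$ provided $\delta_1$ is chosen small enough depending on $K$ and $\delta$.

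The principal obstacle is that the leading-order ratio $\mathcal{I}_2/\mathcal{G}_2$ is not automatically less than one, so the gain that makes the inequality work is genuinely of size $\eps/\lambda$ and must not be consumed by error terms. Closing the estimate therefore requires (i) absorbing the cubic-in-$w$ remainders using $\|w\|_\infty\le\delta_1$ by shrinking $\delta_1$, (ii) absorbing the explicit slack terms $\delta(\eps/\lambda)(|\mathcal{I}_1|+|\mathcal{I}_2|)$ and the loss $\delta(\eps/\lambda)\mathcal{G}_2$ in the definition of $\mathcal{R}_{\eps,\delta}$ into the same Poincaré inequality, and (iii) verifying that the Poincaré constant remains uniform as $\eps,\lambda\to 0$ under the regime $\delta_0^{-1}\eps<\lambda<\delta_0$. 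These are bookkeeping tasks on the expansions above, and the sharp form of Lemma~\ref{lem_poincare} is precisely designed to accommodate them.
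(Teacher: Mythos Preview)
Your strategy (Taylor-expand in $w=n/\tiln-1$, then invoke Lemma~\ref{lem_poincare}) is the right one, but the mechanism you describe is not what actually closes the estimate, and as written your argument has a genuine gap.

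You treat $\mathcal{I}_1$ as a generic $O(\lambda)$ correction and then assert that the ``gain'' in $\mathcal{G}_2-\mathcal{I}_2$ is of size $\eps/\lambda$. Neither is correct. To leading order one has
\[
\mathcal{I}_1+\mathcal{I}_2-\mathcal{G}_2
\;\approx\;\frac{\lambda}{2}\,\frac{n_-}{\sigma_-}\bigl(n_- - q_-\sigma_- - \sigma_-^2\bigr)\int_0^1 w^2\,dy \;+\;\mbox{(cubic in $w$)}\;+\;O(\eps),
\]
after the change of variable $y=(n_--\tiln)/\eps\in(0,1)$. The coefficient $n_- - q_-\sigma_- - \sigma_-^2$ is \emph{identically zero} by the very definition of $\sigma_-$ (it solves $\sigma_-^2+q_-\sigma_--n_-=0$). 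This is the key cancellation; without it the leading term would be $O(\lambda)$ with a sign depending on $q_-$, and no $\eps/\lambda$ perturbative argument could absorb it. In particular your schematic inequality $(1+C\eps/\lambda)(|\mathcal{I}_1|+|\mathcal{I}_2|)\le(1-C\eps/\lambda)\mathcal{G}_2+\cdots$ is false in general: for $q_->0$ one has $n_->\sigma_-^2$, so $|\mathcal{I}_2|/\mathcal{G}_2\approx n_-/\sigma_-^2>1$ already at leading order.

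Once the quadratic cancels, what survives is a \emph{cubic} term with the specific coefficient $\tfrac{2}{3}$ after the rescaling $W:=(\lambda n_-/\eps)w$, plus an $O(1)$ quadratic and the diffusion $\int_0^1 y(1-y)|\partial_yW|^2\,dy$. This is exactly the structure of $\mathcal{R}_\delta(W)$ in Lemma~\ref{lem_poincare}; the lemma is nonlinear precisely because a cubic, not a small quadratic, must be absorbed. Two further points you should fix: (i) $\mathcal{Y}_g$ is not a zero-th moment of $w$; to leading order it is proportional to $\int_0^1 W^2\,dy+2\int_0^1 W\,dy$, which is the combination appearing squared in Lemma~\ref{lem_poincare}, and the hypothesis $|\mathcal{Y}_g|\le K\eps^2/\lambda$ is what gives the a~priori bound $\int_0^1 W^2\,dy\le M(K)$ required there; (ii) the change of variable $\xi\mapsto y$ and the rescaling to $W$ are not cosmetic---they are what makes the constants in the expansion match the sharp constants $(1,\tfrac{2}{3},1)$ of Lemma~\ref{lem_poincare}.
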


To prove this proposition, we will use the nonlinear Poincar\'e type inequality in \cite{KV_arxiv}:
\begin{lemma}\label{lem_poincare}{[Proposition 3.3. in \cite{KV_arxiv}]}
For any given $M>0$, there exists $\delta^{*}=\delta^{*}(M)>0$, such that, for any $\delta\in(0,\delta^{*})$, the following is true:\\ For any $W\in L^2(0,1)$ with  
$\sqrt{y(1-y)}\partial_yW\in L^2(0,1)$, if $\int_0^1 |W(y)|^2\,dy\leq M$, then
\ben\la{inequal_poincare}
 \mathcal{R}_\delta(W)\leq0.
\een where  
\ben\la{def_poincare}\begin{split}
\mathcal{R}_\delta(W):&=-\frac{1}{\delta}\left(\int_0^1W^2\,dy+2\int_0^1 W\,dy\right)^2+(1+\delta)\int_0^1 W^2\,dy\\
&\quad+\frac{2}{3}\int_0^1 W^3\,dy 
+\delta \int_0^1 |W|^3\,dy 
-(1-\delta)\int_0^1 y(1-y)|\partial_y W|^2\,dy.
\end{split}\een
\end{lemma}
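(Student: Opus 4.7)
The plan is to argue by contradiction, pass to a weak limit as $\delta \downarrow 0$, and reduce the claim to a sharp Poincar\'e-type inequality under an $L^2$-type constraint. Suppose the lemma fails: then for some $M > 0$ there exist sequences $\delta_n \downarrow 0$ and $W_n$ satisfying $\int_0^1 W_n^2\,dy \le M$ and $\sqrt{y(1-y)}\,\partial_y W_n \in L^2(0,1)$ such that $\mathcal{R}_{\delta_n}(W_n) > 0$. Dropping the non-positive penalty term from $\mathcal{R}_{\delta_n}(W_n)>0$ gives
\[(1-\delta_n)\!\int_0^1 y(1-y)|\partial_y W_n|^2 \,dy \le (1+\delta_n)\!\int_0^1 W_n^2\,dy + \bigl(\tfrac{2}{3}+\delta_n\bigr)\!\int_0^1 |W_n|^3\,dy.\]
Combining this with a weighted Sobolev interpolation controlling $\|W_n\|_{L^3}^3$ by $\|W_n\|_{L^2}^2$ plus a small fraction of the weighted Dirichlet energy (absorbable into the $(1-\delta_n)$-factor), I obtain uniform bounds $\int_0^1 y(1-y)|\partial_y W_n|^2\,dy \le C(M)$ and $\int_0^1 |W_n|^3\,dy \le C(M)$. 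Feeding these back into $\mathcal{R}_{\delta_n}(W_n) > 0$ then activates the penalty term, giving $\bigl(\int_0^1 W_n^2\,dy + 2\int_0^1 W_n\,dy\bigr)^2 \le C(M)\,\delta_n \to 0$.

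The uniform weighted-Sobolev bound yields, along a subsequence, weak convergence in the weighted space and strong convergence in $L^p(0,1)$ for every $p < \infty$, via a Rellich-type compactness adapted to the degenerate weight $y(1-y)$ combined with the $L^2$ bound to rule out concentration at the endpoints. Calling the limit $W_\ast$, lower semicontinuity of the Dirichlet term and strong $L^3$ convergence let me pass to the limit in $\mathcal{R}_{\delta_n}(W_n) > 0$ to conclude that $\int_0^1 W_\ast^2\,dy + 2\int_0^1 W_\ast\,dy = 0$ together with
\[\int_0^1 y(1-y)|\partial_y W_\ast|^2\,dy \le \int_0^1 W_\ast^2\,dy + \tfrac{2}{3}\int_0^1 W_\ast^3\,dy.\]

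To close the contradiction, I prove the matching sharp inequality at $\delta = 0$: whenever $W$ with $\sqrt{y(1-y)}\,\partial_y W \in L^2$ satisfies the constraint $\int_0^1 W^2\,dy + 2\int_0^1 W\,dy = 0$, one has $\int_0^1 y(1-y)|W'|^2\,dy \ge \int_0^1 W^2\,dy + \tfrac{2}{3}\int_0^1 W^3\,dy$, with equality only when $W \equiv 0$. I split $W = \bar W + W_0$ with $\bar W = \int_0^1 W\,dy$ and $\int_0^1 W_0\,dy = 0$, and use the spectral decomposition of $L := -\partial_y\bigl(y(1-y)\partial_y\,\cdot\,\bigr)$ in the shifted-Legendre/Jacobi basis on $[0,1]$, whose first non-trivial eigenvalue is $2$, giving the weighted Poincar\'e inequality $\int_0^1 y(1-y)|W_0'|^2\,dy \ge 2\int_0^1 W_0^2\,dy$. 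The constraint reduces to $\bar W^2 + 2\bar W + \int_0^1 W_0^2\,dy = 0$, which pins $\bar W = -1 + \sqrt{1 - \int_0^1 W_0^2\,dy}$ and makes $\bar W$ small of order $\int_0^1 W_0^2\,dy$. Substituting the expansion $\int_0^1 W^3\,dy = \bar W^3 + 3\bar W\int_0^1 W_0^2\,dy + \int_0^1 W_0^3\,dy$ and bounding $\int_0^1 W_0^3\,dy$ via H\"older and the Sobolev/Poincar\'e estimate on $W_0$, the target inequality reduces to an algebraic comparison in the scalar $\int_0^1 W_0^2\,dy$, which I verify in closed form; strict positivity of the Poincar\'e deficit $\int_0^1 y(1-y)|W_0'|^2\,dy - 2\int_0^1 W_0^2\,dy$ in the non-trivial case then gives the required contradiction.

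The principal difficulty is the sharp inequality at $\delta = 0$: the cubic term $\tfrac{2}{3}\int_0^1 W^3\,dy$ is not sign-definite, so no crude spectral gap argument suffices, and the coefficient $\tfrac{2}{3}$ must mesh precisely with the Jacobi spectrum and with the nonlinear dependence of $\bar W$ on $\int_0^1 W_0^2\,dy$ forced by the quadratic constraint. A secondary technical obstacle is the compactness step: the endpoint-degenerate weight $y(1-y)$ weakens the Sobolev embedding, so a tightness argument using $\int_0^1 W_n^2\,dy \le M$ is needed to upgrade weak convergence to strong $L^3$ convergence and thereby pass to the limit in $\int_0^1 W_n^3\,dy$.
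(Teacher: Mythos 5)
The paper does not prove this lemma at all: it is quoted verbatim as Proposition~3.3 of \cite{KV_arxiv} and used as a black box, so there is no ``paper's own proof'' to compare against. Evaluating your proposal on its own terms, there are several substantive gaps.

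The most serious one is that the contradiction/weak-limit scheme does not close when the extracted limit is $W_\ast \equiv 0$. Your argument shows $\int_0^1 W_n^2\,dy + 2\int_0^1 W_n\,dy \to 0$ and then passes to a weak limit $W_\ast$ satisfying the constrained inequality at $\delta=0$. But $W_\ast \equiv 0$ trivially saturates that inequality, and nothing you have written rules it out (in fact your own argument shows the constrained $\delta=0$ inequality is \emph{strict} only for $W\not\equiv 0$). If $W_n \to 0$, there is no contradiction with $\mathcal{R}_{\delta_n}(W_n)>0$, since both sides vanish in the limit. You would need a quantitative second-order analysis of $\mathcal{R}_\delta$ near $W=0$ --- that is, show that for $\|W\|$ small the penalty $-\tfrac{1}{\delta}\bigl(\int W^2 + 2\int W\bigr)^2 \approx -\tfrac{4}{\delta}(\int W)^2$ kills the mean while the Poincar\'e gap $\int y(1-y)|W_0'|^2 \ge 2\int W_0^2$ dominates the cubic remainder --- but this is precisely the hard step and it is absent from your outline.

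There are two secondary gaps. First, the ``sharp inequality at $\delta=0$'' is only sketched: you say you ``verify in closed form'' an algebraic comparison, but you do not display it, and the claim that $\tfrac{2}{3}\int W^3$ is exactly compensated by the Jacobi spectral gap together with the quadratic constraint needs an actual calculation, not an assertion. Moreover the constraint $\int W^2 + 2\int W = 0$ admits two branches $\bar W = -1 \pm \sqrt{1 - \int W_0^2}$ (with $\bar W\in[-2,0]$); for $M\ge 4$ both branches are compatible with the hypothesis $\int W^2 \le M$, yet your expansion silently selects $\bar W = -1 + \sqrt{1-\int W_0^2}$ and treats $\bar W$ as small of order $\int W_0^2$. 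Second, the compactness into $L^3$ from a bound on $\int y(1-y)|W'|^2$ plus $\int W^2 \le M$ is nontrivial because the weight degenerates at both endpoints (functions in this weighted $H^1$ can blow up logarithmically near $y\in\{0,1\}$); you gesture at ``tightness'' but do not supply an argument. Until the $W_\ast=0$ degeneracy is handled and the closed-form constrained inequality is actually written out (and both branches treated), the proof is not complete.
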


\subsubsection{Proof of Proposition \ref{prop_near}}
We first consider $\delta_1\in(0,\frac{1}{2})$ such that $\delta_1$ is smaller than
\beq\label{delta_1}
 \min(\sqrt{(n_-/2)}, \sqrt{\eps_1},\delta_*),
\eeq
where $\eps_1$ is as in Lemma \ref{til_prop}, and $\delta_*$ is as in Lemma \ref{lem:local}. Then it follows from Lemma \ref{til_prop} that
\beq\label{difs}
|\sigma-\sigma_-|\le C\eps, \quad \|\tilde n -n_-\|_{L^{\infty}(\bbr)}\le \eps, \quad \|\tilde q -q_-\|_{L^{\infty}(\bbr)}\le C\eps,
\eeq
where $\sigma_-$ denotes the constant in \eqref{sigma_eq_-}. \\
Note also that 
\beq\label{eps1}
\eps=\frac{\eps}{\lambda}\lambda < \delta_1^2.
\eeq

We define
\beq\label{def-y}
y(\xi):=\frac{n_--\tiln(\xi)}{\vep}\quad\mbox{ for } \quad\xi\in\mathbb{R}.
\eeq
Since $\tiln'(\xi)<0$, we will use a change of variables $\xi\in\bbr\mapsto y\in[0,1]$ to rewrite the functionals $\mathcal{Y}_g, \mathcal{I}_1, \mathcal{I}_2, \mathcal{I}_3, \mathcal{G}_2, \mathcal{D}$ in \eqref{badgood-n}.\\
Notice that it follow from \eqref{def_a} that $a=1+\lambda y$ and
\[
\frac{dy}{d\xi}=\frac{-1}{\vep}\tiln'(\xi)=\frac{1}{\lam}a'(\xi).
\]
In what follows, for simplification, we use the notation
\beq\label{wW}
w(y):=\frac{n(\xi(y))}{\tiln(\xi(y))}-1,\quad W(y):=\frac{\lam n_-}{\vep}w(y).
\eeq

$\bullet$ {\bf Change of variables for $\mathcal{Y}_g$:} 
We first set
\begin{align*}
\begin{aligned}
&\mathcal{Y}_g(n) =Y_1+Y_2+Y_3+Y_4,\\
&Y_1:= -{\intr}a' \frac{|{{\varphi}}({n})|^2}{2} d\xi,\\
&Y_2:=-{\intr}a' {\Pi}(n|\tiln) d\xi,\\
&Y_3:= -\frac{\vep}{\lambda} {\intr}aa' \frac{n-\tiln}{\tiln}d\xi,\\
&Y_4:=-\frac{\vep}{\lambda} {\intr}aa' \frac{{{\varphi}}(n)}{\s} d\xi.
\end{aligned}
\end{align*}
We use the change of variables with $|a-1|\le\delta_1$ to have
\beq\label{y3est}
\Big| Y_3 +  \eps \int_0^1 w\, dy \Big| \le  \eps\delta_1  \int_0^1 |w| dy
\eeq
Since 
\[
\varphi(n) = \frac{1}{\sigma}\Big[\Pi(n|\tiln) +\left(\tiln+\frac{\eps}{\lambda} a \right) \Big(\frac{n}{\tiln}-1\Big) \Big],
\]
it follows from \eqref{P3rd} and \eqref{P2nd} in Lemma \ref{lem:local} together with $\frac{\eps}{\lambda} a<2\delta_1$ that for any $n$ satisfying $\|\frac{n}{\tiln}-1\|_{L^\infty(\bbr)}\leq \delta_{1}$,
\begin{align*}
\begin{aligned}
&\Big| \varphi(n) -  \frac{\tiln}{\sigma} \Big(\frac{n}{\tiln}-1\Big) \Big| \le  C\delta_1  \Big|\frac{n}{\tiln}-1\Big|,\\
&\Big| |\varphi(n)|^2 - \left(\frac{\tiln}{\sigma}\right)^2 \Big(\frac{n}{\tiln}-1\Big)^2 \Big| \le  C\delta_1  \Big|\frac{n}{\tiln}-1\Big|^2.
\end{aligned}
\end{align*}
Then we use the change of variables to have
\begin{align*}
\begin{aligned}
&\Big| Y_4 +  \eps \int_0^1 \frac{\tiln}{\sigma^2} w\, dy \Big| \le  C\eps\delta_1  \int_0^1 |w| dy  ,\\
&\Big| Y_1 + \frac{\lambda}{2} \int_0^1 \left(\frac{\tiln}{\sigma}\right)^2 w^2\, dy \Big| \le  C\lambda\delta_1  \int_0^1 w^2 dy.
\end{aligned}
\end{align*}
Thus, using \eqref{difs} with \eqref{eps1}, we have
\begin{align}
\begin{aligned}\label{y14est}
&\Big| Y_4 +  \eps\frac{n_-}{\sigma_-^2} \int_0^1  w\, dy \Big| \le  C\eps\delta_1  \int_0^1 |w| dy  ,\\
&\Big| Y_1 + \frac{\lambda}{2} \left(\frac{n_-}{\sigma_-}\right)^2 \int_0^1  w^2\, dy \Big| \le  C\lambda\delta_1  \int_0^1 w^2 dy.
\end{aligned}
\end{align}
Likewise, since it follows from \eqref{P3rd} and \eqref{P2nd} that
\[
\Big| Y_2 + \frac{\lambda}{2} \int_0^1 \tiln w^2\, dy \Big| \le  C\lambda\delta_1  \int_0^1 w^2 dy,
\]
we have
\beq\label{y2est}
\Big| Y_2 + \frac{\lambda n_-}{2} \int_0^1 w^2\, dy \Big| \le  C\lambda\delta_1  \int_0^1 w^2 dy.
\eeq
Therefore, combining \eqref{y3est}, \eqref{y14est}, \eqref{y2est} with the notation \eqref{wW}, we have
\[
\left|  \mathcal{Y}_g + \frac{\eps^2}{2\lambda}\left(\frac{1}{n_-}+\frac{1}{\s_-^2}\right) \Big(\int_0^1 W^2 dy + 2\int_0^1 W dy\Big)  \right| \le C\delta_1\frac{\eps^2}{\lambda} \Big(\int_0^1 W^2 dy + \int_0^1 |W| dy\Big).
\]
Setting $\beta:=2\left(\frac{1}{n_-}+\frac{1}{\s_-^2}\right)^{-1}$, we have
\beq\label{YW}
\left| \beta\frac{\lambda}{\eps^2} \mathcal{Y}_g + \int_0^1 W^2 dy + 2\int_0^1 W dy  \right| \le C\delta_1 \Big(\int_0^1 W^2 dy + \int_0^1 |W| dy\Big).
\eeq

$\bullet$ {\bf Change of variables for $\mathcal{I}_1, \mathcal{I}_2$:} 
We first use \eqref{P3rd} and \eqref{P2nd} to find that for any $n$ satisfying $\|\frac{n}{\tiln}-1\|_{L^\infty(\bbr)}\leq \delta_{1}$,
\begin{align*}
\begin{aligned}
\mathcal{I}_1\le -  \intr a' \frac{\tilq \tiln}{2} \Big(\frac{n}{\tiln}-1\Big)^2 \, d\xi  +  \intr a' \frac{\tilq \tiln}{6} \Big(\frac{n}{\tiln}-1\Big)^3 \, d\xi + C\delta_1 \intr a' \Big|\frac{n}{\tiln}-1\Big|^3 \, d\xi
+ C\frac{\vep^2}{\lam} \intr a' \Big(\frac{n}{\tiln}-1\Big)^2 \, d\xi.
\end{aligned}
\end{align*}
Then using  \eqref{difs}, we have
\beq\label{fb1}
\mathcal{I}_1\le -\lambda \frac{q_- n_-}{2}  \int_0^1 w^2\, dy +\lambda \frac{q_- n_-}{6}  \int_0^1 w^3\, dy  +  C\lambda\eps  \int_0^1 w^2 dy+ C\delta_1\lambda  \int_0^1 |w|^3 dy.
\eeq
Since 
\[
|\varphi(n)|^2 = \frac{1}{\sigma^2}\Big[  \left(\tiln+\frac{\eps}{\lambda} a \right)^2 \Big(\frac{n}{\tiln}-1\Big)^2 +2 \left(\tiln+\frac{\eps}{\lambda} a \right) \Big(\frac{n}{\tiln}-1\Big) \Pi(n|\tiln) +\Pi(n|\tiln)^2 \Big],
\]
using \eqref{P2nd}, we have
\begin{align*}
\begin{aligned}
\mathcal{I}_2 &\le  \intr a' \frac{\tiln^2}{2\sigma} \Big(\frac{n}{\tiln}-1\Big)^2 \, d\xi + \frac{\eps}{\lambda}\intr a' a \frac{\tiln}{\sigma} \Big(\frac{n}{\tiln}-1\Big)^2 \, d\xi +  \intr a' \frac{\tiln^2}{2\sigma} \Big(\frac{n}{\tiln}-1\Big)^3 \, d\xi \\
 &\quad +C\delta_1\frac{\eps}{\lambda} \intr a'  \Big(\frac{n}{\tiln}-1\Big)^2 \, d\xi +  C\delta_1 \intr a' \Big|\frac{n}{\tiln}-1\Big|^3 \, d\xi.
\end{aligned}
\end{align*}
Thus,
\begin{align}
\begin{aligned}\label{fb2}
\mathcal{I}_2 &\le \lambda\frac{n_-^2}{2\s_-}   \int_0^1 w^2\, dy + \eps \frac{n_-}{\s_-}   \int_0^1 w^2\, dy + \lambda \frac{n_-^2}{2\s_-}   \int_0^1 w^3\, dy\\
&\quad+ C\delta_1 \eps\int_0^1 w^2\, dy+ C\delta_1\lambda \int_0^1 |w|^3\, dy.
\end{aligned}
\end{align}

$\bullet$ {\bf Change of variables for $\mathcal{G}_2$:} 
We use \eqref{P3rd} and \eqref{difs} to find that 
\begin{align}
\begin{aligned}\label{fg2}
\mathcal{G}_2 &\ge \s {\intr} a' \frac{\tiln}{2}  \left(\frac{n}{\tiln}-1\right)^2 d\xi -\s {\intr} a' \frac{\tiln}{6}  \left(\frac{n}{\tiln}-1\right)^3 d\xi  \\
&\ge \frac{\lambda\s_- n_-}{2} \int_0^1 w^2 dy - \frac{\lambda \s_-n_-}{6}  \int_0^1 w^3 dy -C\lambda\eps \int_0^1 w^2 dy - C\lambda\eps \int_0^1 |w|^3\, dy.
\end{aligned}
\end{align}

$\bullet$ {\bf Estimates on $\mathcal{I}_1+\mathcal{I}_2-\mathcal{G}_2$:} 
We combine \eqref{fb1}, \eqref{fb2} and  \eqref{fg2} to have
\begin{align*}
\begin{aligned}
&\mathcal{I}_1+\mathcal{I}_2-\mathcal{G}_2 \\
&\quad\le \frac{\lambda}{2}\frac{n_-}{\s_-} \underbrace{(n_- -q_-\s_- -\s_-^2)}_{=:J_1} \int_0^1 w^2 dy + \eps \frac{n_-}{\s_-}   \int_0^1 w^2\, dy \\
&\quad\quad+\frac{\lambda}{2}\frac{n_-}{\s_-} \underbrace{\left(n_- +\frac{1}{3}q_-\s_- +\frac{1}{3}\s_-^2 \right)}_{=:J_2} \int_0^1 w^3 dy + C\eps\delta_1 \int_0^1 w^2\, dy+ C\lambda\delta_1 \int_0^1 |w|^3\, dy.
\end{aligned}
\end{align*}
Since the constant $\sigma_-=\frac{-q_-+\sqrt{q_-^2+4n_-}}{2}$ solves the quadratic equation $\s_-^2 +q_-\s_- -n_-=0$, the above coefficients $J_1, J_2$ become $J_1=0$, $J_2=\frac{4}{3}n_-$. \\
Therefore, we have
\begin{align*}
\begin{aligned}
&\mathcal{I}_1+\mathcal{I}_2-\mathcal{G}_2 \\
&\quad\le \frac{\eps^3}{\lambda^2}\frac{1}{n_-\s_-} \Big( \int_0^1 W^2 dy + \frac{2}{3}   \int_0^1 W^3\, dy\Big) + C \frac{\eps^3}{\lambda^2} \delta_1  \Big( \int_0^1 W^2 dy +    \int_0^1 |W|^3\, dy\Big),
\end{aligned}
\end{align*}
which can be rewritten as (by normalizing the right-hand side above)
\begin{align}
\begin{aligned}\label{sum-bg}
& n_-\s_-\frac{\lambda^2}{\eps^3} \Big(\mathcal{I}_1+\mathcal{I}_2-\mathcal{G}_2\Big) \\
&\quad\le  \int_0^1 W^2 dy + \frac{2}{3}   \int_0^1 W^3\, dy + C  \delta_1  \Big( \int_0^1 W^2 dy + \int_0^1 |W|^3\, dy\Big).
\end{aligned}
\end{align}

As in  \eqref{fb1}, \eqref{fb2} and  \eqref{fg2}, 
 we can estimate 
\[
|\mathcal{I}_1|+|\mathcal{I}_2|+|\mathcal{G}_2| \le C\lambda \int_0^1 w^2 dy \le C\frac{\eps^2}{\lambda} \int_0^1 W^2 dy,
\]
which yields
\[
\delta_1\frac{\eps}{\lambda}  \Big(|\mathcal{I}_1|+|\mathcal{I}_2|+|\mathcal{G}_2|\Big) \le C\delta_1\frac{\eps^3}{\lambda^2} \int_0^1 W^2 dy.
\]
Therefore, we have
\beq\label{sum-a}
 n_-\s_-\frac{\lambda^2}{\eps^3}\Big[ \delta_1\frac{\eps}{\lambda}  \Big(|\mathcal{I}_1|+|\mathcal{I}_2|+|\mathcal{G}_2|\Big) \Big] \le C\delta_1 \int_0^1 W^2 dy.
\eeq

$\bullet$ {\bf Change of variables for $\mathcal{D}$:} 
Since
\[
\mathcal{D}=\intr a n\Big(\frac{\tiln}{n}\Big)^2 \Big| \pa_\xi \Big(\frac{n}{\tiln}-1\Big)\Big|^2 d\xi,
\]
we have
\[
\mathcal{D}=\int_0^1 
a
 \frac{\tiln^2}{n} |\partial_y w|^2 \Big(\frac{dy}{d\xi}\Big) dy.
\]
To compute $\frac{dy}{d\xi}$, using \eqref{def-y} with $n_- -n_+=\eps$, and \eqref{prop_til}, we have
\[
y(1-y)=\frac{(n_--\tiln)}{\vep}\frac{(\tiln-n_+)}{\vep}=-\frac{\s\tiln'}{\vep^2}=\frac{\s}{\vep\lam}a',
\]
which implies
\[
\frac{dy}{d\xi} = \frac{a'}{\lambda} =\frac{\eps}{\sigma} y(1-y).
\]
Since
\[
\inf_y\Big(a\frac{\tiln^2}{n}\Big)\geq 
\inf_y\Big(\frac{\tiln^2}{n}\Big)\geq
n_+(1-\delta_1)
\geq (n_--\vep)(1-\delta_1)\geq n_--C\delta_1
\]
using \eqref{difs}, we have
\begin{align*}
\begin{aligned}
\mathcal{D}
&\ge \eps \frac{n_-}{\s_-} (1-C\delta_1)  \int_0^1 y(1-y)  |\partial_y w|^2 dy.
\end{aligned}
\end{align*}
Therefore,
\[
\mathcal{D}\ge \frac{\eps^3}{\lambda^2} \frac{1}{n_-\s_-} (1-C\delta_1)  \int_0^1 y(1-y)  |\partial_y W|^2 dy.
\]
Hence
\beq\label{finald}
- n_-\s_-\frac{\lambda^2}{\eps^3} \mathcal{D} \le - (1-C\delta_1)  \int_0^1 y(1-y)  |\partial_y W|^2 dy.
\eeq

$\bullet$ {\bf A uniform bound of $\intz W^2dy $:} 
Using \eqref{condn_for_main_prop} and \eqref{YW}, we have
\ben\begin{split}
\intz W^2dy-2\Big|\intz Wdy\Big|&\leq\intz W^2dy+2\intz Wdy\\
&\leq\Big|\beta\frac{\lambda}{\eps^2}\mathcal{Y}_g+\intz W^2dy+2\intz Wdy\Big|+\beta\frac{\lambda}{\eps^2}\Big|\mathcal{Y}_g\Big|\\
&\leq C\delta_1 \Big(\int_0^1 W^2 dy + \int_0^1 |W| dy\Big)+\beta K,
\end{split}\een   where $K$ is the constant in the assumption \eqref{condn_for_main_prop}.\\
Using $$\Big|\intz Wdy\Big|\leq \intz |W|dy\leq \frac{1}{8}\intz W^2dy+2,$$
we have
\ben\begin{split}
\intz W^2dy
&\leq 2\Big|\intz Wdy\Big|+C\delta_1 \Big(\int_0^1 W^2 dy + \int_0^1 |W| dy\Big)+\beta K\\
&\leq \frac{1}{2}\intz W^2dy+C
\end{split}\een  by taking $\delta_1$ small enough.
Therefore there exists a positive constant $M$ depending on $K$ such that
\beq\label{controlW}
\intz W^2dy \le M.
\eeq

$\bullet$ {\bf Control on $-|\mathcal{Y}_g|^2$ :}
 As in \cite{KV_arxiv}, we here use the following inequality: For any $a,b\in \bbr$,  
 $$
 -a^2\leq -\frac{b^2}{2}+|b-a|^2.
 $$
 Using this inequality with 
 $$
 a=- \beta\frac{\lambda}{\eps^2}\mathcal{Y}_g ,\qquad b=\int_0^1W^2\,dy+2\int_0^1 W\,dy,
 $$
 we find
 \begin{eqnarray*}
 && - n_-\s_-\frac{\lambda^2}{\eps^3} \frac{|\mathcal{Y}_g|^2}{\eps \delta_1}
 =-\frac{n_-\s_-}{\delta_1\beta^2}\left| \beta\frac{\lambda}{\eps^2}\mathcal{Y}_g\right|^2 \\
 &&\qquad\leq -\frac{n_-\s_-}{2\delta_1\beta^2}\left| \int_0^1W^2\,dy+2\int_0^1 W\,dy\right|^2\\
  &&\qquad \qquad +\frac{n_-\s_-}{\delta_1\beta^2}\left| \beta\frac{\lambda}{\eps^2} \mathcal{Y}_g + \int_0^1 W^2 dy + 2\int_0^1 W dy  \right|^2.
 \end{eqnarray*}
Then by \eqref{YW}, we have
  \begin{eqnarray*}
  - n_-\s_-\frac{\lambda^2}{\eps^3} \frac{|\mathcal{Y}_g|^2}{\eps \delta_1}\leq -\frac{n_-\s_-}{2\delta_1\beta^2}\left| \int_0^1W^2\,dy+2\int_0^1 W\,dy\right|^2 +C\delta_1 \left(\int_0^1W^2\,dy+\int_0^1|W|\,dy \right)^2.
 \end{eqnarray*}
Using \eqref{controlW}, we have 
 $$
 \left(\int_0^1W^2\,dy+\int_0^1|W|\,dy \right)^2\leq \left(\int_0^1W^2\,dy+\sqrt{\int_0^1|W|^2\,dy }\right)^2\leq C\int_0^1W^2\,dy.
 $$
Therefore, we have
\beq\label{Y^2}
   - n_-\s_-\frac{\lambda^2}{\eps^3} \frac{|\mathcal{Y}_g|^2}{\eps \delta_1}\leq -\frac{n_-\s_-}{2\delta_1\beta^2}\left| \int_0^1W^2\,dy+2\int_0^1 W\,dy\right|^2 +C\delta_1 \int_0^1W^2\,dy.
\eeq

$\bullet$ {\bf Conclusion:}  
Since $\mathcal{G}_2\ge 0$, we see that for any $\delta<\delta_1$, 
\[
\mathcal{R}_{\eps,\delta}(n) \leq -\frac{1}{\eps\delta_1}|\mathcal{Y}_g|^2 +\left(\mathcal{I}_1+\mathcal{I}_2-\mathcal{G}_2 \right)+\delta_1\frac{\eps}{\lambda} \left(|\mathcal{I}_1|+|\mathcal{I}_2|+|\mathcal{G}_2|\right)+(1-\delta_1)\mathcal{D}. 
\]
Multiplying \eqref{finald} by $(1-\delta_1)$, and summing it with \eqref{sum-bg}, \eqref{sum-a} and \eqref{Y^2} with putting $C_*:=\frac{2\beta^2}{n_-\s_-}$, we find 
\begin{align*}
\begin{aligned}
&n_-\s_-\frac{\lambda^2}{\eps^3} \mathcal{R}_{\eps,\delta}(n)\\
&\quad\le-\frac{1}{C_*\delta_1}\left(\int_0^1W^2\,dy+2\int_0^1 W\,dy\right)^2+(1+C \delta_1)\int_0^1 W^2\,dy\\
&\qquad+\frac{2}{3}\int_0^1 W^3\,dy +C \delta_1\int_0^1 |W|^3\,dy  -(1-C \delta_1)\int_0^1 y(1-y)|\partial_y W|^2\,dy.
\end{aligned}
\end{align*}
Let $\delta^*$ be the constant in Lemma \ref{lem_poincare} corresponding to the constant $M$ of \eqref{controlW}. \\
Taking $\delta_1$ small enough such that $\max(C_*, C) \delta_1\le \delta^*$, therefore we have
\begin{align*}
\begin{aligned}
&n_-\s_-\frac{\lambda^2}{\eps^3} \mathcal{R}_{\eps,\delta}(n)\\
&\quad\le-\frac{1}{\delta_*}\left(\int_0^1W^2\,dy+2\int_0^1 W\,dy\right)^2+(1+ \delta_*)\int_0^1 W^2\,dy\\
&\qquad+\frac{2}{3}\int_0^1 W^3\,dy + \delta_*\int_0^1 |W|^3\,dy  -(1- \delta_*)\int_0^1 y(1-y)|\partial_y W|^2\,dy =: R_{\delta_*}(W).
\end{aligned}
\end{align*}
Then we have $R_{\delta_*}(W)\leq0$ by Lemma \ref{lem_poincare}. Therefore  $\mathcal{R}_{\eps,\delta}(n)\leq 0$.

\section{Proof of Proposition \ref{main_prop}}\la{proof_main_prop}
 

\subsection{Truncation of the big values of $|(n/\tiln)-1|$}\label{section-finale}
In order to use Proposition \ref{prop_near}, we need to show that  the values for $n$ such that $|(n/\tiln)-1|\geq\delta_1$ have a small effect. However, the value of $\delta_1$ is itself conditioned to the constant $K$ in Proposition \ref{prop_near}. Therefore, we need first to find a uniform bound on $\mathcal{Y}_g$ which is not yet conditioned on the level of truncation $\delta_1$.

We define a truncation on $|(n/\tiln)-1|$ with any constant $\theta\in(0,1/2)$ as follows: 
\be\la{def_bar_}
\bar{n}_\theta:=\begin{cases}
&n \mbox{ if } |\frac{n}{\tiln}-1|\leq \theta\\ 
&(1+\theta)\tiln \mbox{ if }  \frac{n}{\tiln}-1\geq \theta\\
&(1-\theta)\tiln \mbox{ if }  \frac{n}{\tiln}-1\leq-\theta.
\end{cases}
\ee 
Notice that 
\beq\label{st_bar}
\Big|\frac{\bar{n}_\theta}{\tiln}-1\Big|\leq \theta.
\eeq

\begin{lemma}\la{lem_ey}
There exist constants $\delta_0\in(0,1/2)$, $C, K>0$ such that 
for any $\eps, \lambda>0$ with $\delta_0^{-1}\eps<\lambda<\delta_0$, the following holds for $U\in\mathcal{H}$ whenever $|\mathcal{Y}(U)|\leq \vep^2$:
\be\la{eta_small_} \int_\bbr a' \Pi(n|\tiln)\,d\xi+ \intr a' |q-\tilq |^2\,d\xi \leq C\frac{\varepsilon^2}{\lambda},\ee and 
\be\la{y_g_small_} |\mathcal{Y}_g(\bar{n}_\theta)|\leq K \frac{\varepsilon^2}{\lambda}\quad
\mbox{ for any }  \theta\in(0,\frac{1}{2}) . 
\ee  
\end{lemma}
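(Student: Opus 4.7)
The plan is to prove \eqref{eta_small_} by an absorption argument applied to a direct expansion of $\mathcal{Y}(U)$, and then to deduce \eqref{y_g_small_} by splitting each piece of $\mathcal{Y}_g(\bar n_\theta)$ into the untruncated and truncated regions of the line.

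First, using the identities $a'=-(\lambda/\eps)\tiln'$ from \eqref{prop_a} and $\tilq'=-\tiln'/\sigma$ from \eqref{prop_til}, a direct computation rewrites
\[
\mathcal{Y}(U)=-\frac{A}{2}-B-\frac{\eps}{\lambda}\int \frac{aa'}{\tiln}(n-\tiln)\,d\xi+\frac{\eps}{\lambda\sigma}\int aa'(q-\tilq)\,d\xi,
\]
where $A:=\int a'(q-\tilq)^2\,d\xi$ and $B:=\int a'\Pi(n|\tiln)\,d\xi$. Since $\int a'\,d\xi=\lambda$ and $0<a\le 1+\lambda\le 2$, Cauchy--Schwarz bounds the last term by $(C\eps/\sqrt{\lambda})\sqrt{A}$. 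For the $(n-\tiln)$ cross term I would split the line at $\{|n/\tiln-1|\le 1/2\}$: on that set \eqref{Q_loc_} gives $(n-\tiln)^2\le C\Pi(n|\tiln)$, so Cauchy--Schwarz yields $(C\eps/\sqrt{\lambda})\sqrt{B}$, while on its complement \eqref{upper_1/2} gives $|n-\tiln|\le C\Pi(n|\tiln)$, yielding $(C\eps/\lambda)B$. Young's inequality together with the smallness $\eps/\lambda\le\delta_0$ then absorbs fractions of $A$ and $B$ into the left-hand side, giving $A+B\le C\eps^2/\lambda$, which is \eqref{eta_small_}.

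For \eqref{y_g_small_}, I would split the real line into the untruncated set $G_\theta:=\{|n/\tiln-1|\le\theta\}$, on which $\bar n_\theta=n$, and its complement $B_\theta$, on which $|\bar n_\theta-\tiln|=\theta\tiln$. The key uniform-in-$\theta$ observation is that on $B_\theta$ one always has $\Pi(n|\tiln)\ge c\theta^2$: if $\theta<|n/\tiln-1|\le 1/2$ this follows from \eqref{Q_loc_} via $(n-\tiln)^2\ge\theta^2\tiln^2$; if $|n/\tiln-1|>1/2$ then \eqref{upper_1/2} gives the stronger bound $\Pi(n|\tiln)\ge c$. Consequently $\int_{B_\theta}a'\,d\xi\le CB/\theta^2\le C\eps^2/(\lambda\theta^2)$. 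Since $|\bar n_\theta/\tiln-1|\le 1/2$ everywhere, \eqref{Q_loc_} and the definition of $\varphi$ yield $\Pi(\bar n_\theta|\tiln)\le C\theta^2$ and $|\varphi(\bar n_\theta)|\le C\theta$ on $B_\theta$, so the $\Pi$ and $|\varphi|^2$ pieces of $\mathcal{Y}_g(\bar n_\theta)$ over $B_\theta$ contribute at most $C\theta^2\cdot C\eps^2/(\lambda\theta^2)=C\eps^2/\lambda$ --- the two $\theta^2$ factors cancel, yielding the desired uniform bound.

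The two cross terms carrying the factor $\eps/\lambda$ are the delicate part and require a further splitting $B_\theta=B_\theta^{(1)}\cup B_\theta^{(2)}$ with $B_\theta^{(1)}:=\{\theta<|n/\tiln-1|\le 1/2\}$ and $B_\theta^{(2)}:=\{|n/\tiln-1|>1/2\}$. On $G_\theta\cup B_\theta^{(1)}$ we have $|n/\tiln-1|\le 1/2$, so \eqref{Q_loc_} applies; since $|\bar n_\theta-\tiln|\le|n-\tiln|$ pointwise, Cauchy--Schwarz plus $(n-\tiln)^2\le C\Pi(n|\tiln)$ bounds the cross-term contribution by $(C\eps/\sqrt{\lambda})\sqrt{B}\le C\eps^2/\lambda$. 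On $B_\theta^{(2)}$ the integrand $|\bar n_\theta-\tiln|/\tiln+|\varphi(\bar n_\theta)|/\sigma$ is bounded by an absolute constant, while $\int_{B_\theta^{(2)}}a'\,d\xi\le CB$ thanks to $\Pi(n|\tiln)\ge c$ there, so this region contributes $C(\eps/\lambda)B\le C\eps^3/\lambda^2\le C\eps^2/\lambda$ using $\eps/\lambda<\delta_0<1$. The main obstacle is precisely this uniform-in-$\theta$ estimate on the truncated region: a naive splitting at $\theta$ (instead of the fixed threshold $1/2$) would produce a contribution of order $\eps^3/(\lambda^2\theta)$, which blows up as $\theta\to 0$; only the $\theta$-independent threshold decouples the bound from $\theta$.
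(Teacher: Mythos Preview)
Your argument for \eqref{eta_small_} is exactly the paper's: rewrite $\mathcal{Y}(U)$ via \eqref{der-eta}, split the $(n-\tiln)$ cross term at the threshold $|n/\tiln-1|=1/2$, apply \eqref{Q_loc_} and \eqref{upper_1/2}, and absorb. For \eqref{y_g_small_} your proof is correct but takes a genuinely different and more laborious route. You decompose the line into $G_\theta$, $B_\theta^{(1)}$, $B_\theta^{(2)}$ and track the $\theta$-dependence by hand, relying on the cancellation $\theta^2\cdot\theta^{-2}$ for the quadratic pieces and on the fixed threshold $1/2$ to decouple the linear cross terms from~$\theta$. The paper instead invokes the monotonicity \eqref{Pi-mono}: since $\bar n_\theta$ always lies between $n$ and $\tiln$, one has $\Pi(\bar n_\theta|\tiln)\le\Pi(n|\tiln)$ pointwise, and combined with \eqref{Q_loc_} (whose constant $C_1$ is independent of $\theta\in(0,1/2]$, see Remark~\ref{rem_c_1_indep}) this gives $|\bar n_\theta-\tiln|^2\le C_1\Pi(\bar n_\theta|\tiln)\le C_1\Pi(n|\tiln)$ globally. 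All terms of $\mathcal{Y}_g(\bar n_\theta)$ are then bounded directly by $C\int a'\Pi(n|\tiln)\,d\xi$ and $C(\eps/\sqrt\lambda)\big(\int a'\Pi(n|\tiln)\,d\xi\big)^{1/2}$, with no region-splitting and no $\theta$ ever appearing. Your approach has the merit of being entirely self-contained from \eqref{Q_loc_}--\eqref{upper_1/2}, whereas the paper's shortcut needs the extra observation \eqref{Pi-mono}; but once that monotonicity is available it eliminates the whole discussion of $G_\theta$ versus $B_\theta$ and the ``obstacle'' you identify in your last paragraph simply does not arise.
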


\begin{proof}
$\bullet$ {\it proof of \eqref{eta_small_}} : 
We consider $\delta_0$ small enough such that it is smaller than \eqref{delta_1}, and therefore there exists $C>0$ such that $ \s, \tiln \in (C^{-1},C)$.\\

First of all, using \eqref{der-eta} together with $a'=-\frac{\lambda}{\vep}\tiln'$ and ${\tilq}'=-\frac{{\tiln}'}{\sigma}$, we rewrite $\mathcal{Y}(U)$ in \eqref{badgood} as
\beq\label{re-Y}
\mathcal{Y}(U)= -{\intr}a'\Big(\frac{|q-\tilq|^2}{2}+{\Pi}(n|\tiln)\Big)d\xi-\frac{\vep}{\lambda}{\intr}aa'\Big(\frac{n-\tiln}{\tiln}-\frac{q-\tilq}{\s}\Big)d\xi.
\eeq
Then we have
\begin{align*}
\begin{aligned}
 &\intr a'\eta(U|\tilu)d\xi  \leq |\mathcal{Y}(U)|  +\frac{\vep}{\lambda}{\intr}aa' \Big|\frac{n-\tiln}{\tiln}-\frac{q-\tilq}{\s}\Big|d\xi \\
 &\leq \vep^2 +C\el\int_{ \{ |\frac{n}{\tiln}-1|\le\frac{1}{2} \}} a'|n-\tiln|d\xi +C\el\int_{ \{ |\frac{n}{\tiln}-1|>\frac{1}{2} \} } a'|n-\tiln|d\xi
 +C\el\intr a' |q-\tilq|d\xi.
\end{aligned}
\end{align*}
Thus we use \eqref{Q_loc_} and \eqref{upper_1/2} to have 
\begin{align*}
\begin{aligned}
 &\intr a'\eta(U|\tilu)d\xi \\
 &\leq \vep^2 +C\el\sqrt{\int_{
\{
|\frac{n}{\tiln}-1|\le\frac{1}{2}
\}
}a'|n-\tiln|^2d\xi}\cdot\sqrt{\intr a' d\xi}
+C\el\int_{ \{ |\frac{n}{\tiln}-1|>\frac{1}{2} \} } a'|n-\tiln|d\xi \\
&\quad\quad
 +C\el\sqrt{\intr a' |q-\tilq|^2d\xi}\cdot\sqrt{\intr a' d\xi}      \\
  &\leq \vep^2 +C\frac{\vep}{\sqrt{\lam}}\sqrt{\int_{
\{
|\frac{n}{\tiln}-1|\le\frac{1}{2}
\}
}a'
{\Pi}(n|\tiln)
d\xi}
+C\delta_0\int_{\{|\frac{n}{\tiln}-1|>\frac{1}{2}\}} a'
{\Pi}(n|\tiln)
d\xi \\
&\quad\quad+C\frac{\vep}{\sqrt{\lam}}\sqrt{\intr a' |q-\tilq|^2d\xi}  \\
&\le C\eel +\frac{1}{2}\intr a' \eta(U|\tilu)d\xi 
\end{aligned}
\end{align*} by taking $\delta_0$ small enough.
 Hence we have
\[
\intr a'\eta(U|\tilu)d\xi  \leq C\eel,
\]
which implies \eqref{eta_small_}.\\
 
 $\bullet$ {\it proof of \eqref{y_g_small_}} : 
 Let $\theta\in(0,1/2)$. 
Recall the functional $\mathcal{Y}_g$ and $\varphi$ in \eqref{badgood-n}. Since 
\[
|\varphi(\bar{n}_\theta)|\le C \Pi(\bar{n}_\theta |\tiln) + C |\bar{n}_\theta -\tiln|, 
\]
we have
\begin{align*}
\begin{aligned}
\mathcal{Y}_g(\bar{n}_\theta)&\le C {\intr}a' {\Pi}(\bar{n}_\theta|\tiln)^2 d\xi + C {\intr}a' |\bar{n}_\theta-\tiln|^2 d\xi + C\el {\intr}a' |\bar{n}_\theta-\tiln| d\xi + C{\intr}a' {\Pi}(\bar{n}_\theta|\tiln) d\xi.
\end{aligned}
\end{align*} 
Since it follows from \eqref{Q_loc_} with Remark \ref{rem_c_1_indep} and \eqref{st_bar} that
\[
{\Pi}(\bar{n}_\theta|\tiln) \le C_1|\bar{n}_\theta-\tiln|^2  \le C\Big|\frac{\bar{n}_\theta}{\tiln}-1\Big|\leq C,
\]
we have
\[
{\intr}a' {\Pi}(\bar{n}_\theta|\tiln)^2 d\xi \le C {\intr}a' {\Pi}(\bar{n}_\theta|\tiln) d\xi.
\]
Likewise, using \eqref{Q_loc_}, we have
\[
 {\intr}a' |\bar{n}_\theta-\tiln|^2 d\xi \le C {\intr}a' {\Pi}(\bar{n}_\theta|\tiln) d\xi,
\]
and 
\[
\el {\intr}a' |\bar{n}_\theta-\tiln| d\xi \le \frac{\eps}{\sqrt\lambda} \sqrt{ \intr a' |\bar{n}_\theta-\tiln|^2 d\xi} \le C\frac{\eps}{\sqrt\lambda} \sqrt{ \intr a' {\Pi}(\bar{n}_\theta|\tiln)  d\xi}.
\]
Since \eqref{Pi-mono} and \eqref{def_bar_} imply
\beq\label{Pi-com}
{\Pi}(\bar{n}_\theta|\tiln) \le \Pi (n|\tiln),
\eeq
we use \eqref{eta_small_} to find that there exists $K>0$ such that
\[
\mathcal{Y}_g(\bar{n}_\theta) \le C {\intr}a' {\Pi}(n|\tiln) d\xi +C\frac{\eps}{\sqrt\lambda} \sqrt{ \intr a' {\Pi}(n|\tiln)  d\xi}  \le K \frac{\varepsilon^2}{\lambda}.
\]
\end{proof}

From now until the end, we take and  fix  the constant $\delta_1$ from Proposition \ref{prop_near} associated to the constant $K$ of Lemma \ref{lem_ey}. 
In what follows, we use the simple notation: (without confusion)
$$
\bar n:=\bar n_{\delta_1},  \qquad \bar{U}:=(\bar n, q), \qquad \mathcal{B}:=\mathcal{B}_{\delta_1} \qquad\mbox{and}\qquad 
\mathcal{G}:=\mathcal{G}_{\delta_1}\qquad(\mbox{see }\,\eqref{bg-max}).
$$
Note that from Lemma \ref{lem_ey}, we have 
\begin{equation}\label{YC2}
|\mathcal{Y}_g(\bar n)|\leq K \frac{\eps^2}{\lambda}.
\end{equation}

In what follows, we will set $\Omega:=\{\xi~|~|\frac{n}{\tiln}-1| \leq\delta_1\}$. 

We decompose  $\mathcal{G}=\mathcal{G}_1^I+ \mathcal{G}_1^O+ \mathcal{G}_2+ \mathcal{D}$ where \begin{align}
\begin{aligned}\label{ggd}
&\mathcal{G}_1^I(U)=\frac{\sigma }{2}\int_\Omega a' \Big(q-\tilq +\varphi(n) \Big)^2 d\xi, \\
&\mathcal{G}_1^O(U)= \s\int_{\Omega^c} a' \frac{|q-\tilq|^2}{2}  d\xi,\\
&\mathcal{G}_2(U)=\s\intr a' {\Pi}(n | \tiln) d\xi ,\\
&\mathcal{D}(U)=  \intr a n\Big| \pa_\xi \Big(\log\frac{n}{\tiln}\Big)\Big|^2 d\xi.
\end{aligned}
\end{align}
 Notice that the functionals $\mathcal{G}_2, \mathcal{D}$ are as in \eqref{badgood-n} and they do not depend on $q$.

We first notice that it follows from \eqref{Pi-com} that
\begin{equation}\label{eq_G}
 \mathcal{G}_2(U)- \mathcal{G}_2(\bar U)=\sigma \int_\bbr a' \left( \Pi (n|\tiln)-{\Pi}(\bar{n} |\tiln) \right)\,d\xi\geq 0,
\end{equation}
which together with  \eqref{eta_small_} yields
\ben\label{l2}
0\leq \mathcal{G}_2(U)- \mathcal{G}_2(\bar U) \leq  C {\intr}a' {\Pi}(n|\tiln) d\xi \leq C\frac{\eps^2}{\lambda}.
\een

On the other hand, since $\bar{n}/\tiln$ is constant for any $n$ satisfying either $(n/\tiln)>1+\delta_1$ or $(n/\tiln)<1-\delta_1$ by the definition of $\bar n$, we see
\[
\mathcal{D}(\bar n)=\intr an\Big|\pa_\xi \log\frac{n}{\tiln} \Big|^2 {\mathbf 1}_{\{ |\frac{n}{\tiln}-1|\leq \delta_{1} \}} d\xi.
\]
Therefore we have
\be\la{P_g_diff}
   \mathcal{D}(n)-\mathcal{D}(\bar n)= {\intr}an\Big|\pa_\xi[\log\frac{n}{\tiln}]\Big|^2  {\mathbf 1}_{\{ |\frac{n}{\tiln}-1|> \delta_{1} \}} d\xi  \ge 0.
\ee  
Hence, since \eqref{YC2}, \eqref{eq_G} and \eqref{P_g_diff} together with \eqref{def_H_space} imply that for any $(n,q)\in \mathcal{H}$, $\bar n$ satisfies the assumptions \eqref{condn_for_main_prop}, Proposition \ref{prop_near} implies
\beq\label{near_ineq}
\mathcal{R}_{\eps,\delta_1}(\bar n)\le 0. 
\eeq

Before specifying the following proposition, we first recall \eqref{re-Y} as
\[
\mathcal{Y}(U)= -{\intr}a'\Big(\frac{|q-\tilq|^2}{2}+{\Pi}(n|\tiln)\Big)d\xi-\frac{\vep}{\lambda}{\intr}aa'\Big(\frac{n-\tiln}{\tiln}-\frac{q-\tilq}{\s}\Big)d\xi.
\]
We split $\mathcal{Y}$ into four parts $\mathcal{Y}_g$,  $\mathcal{Y}_b$, $\mathcal{Y}_l$, $\mathcal{Y}_s$ as follows: \\
\[
\mathcal{Y}= \mathcal{Y}_g +\mathcal{Y}_b +\mathcal{Y}_l+ \mathcal{Y}_s,
\]
where 
\begin{align*}
\begin{aligned}
\mathcal{Y}_g(U)&=-\int_\Omega a'\Big(\frac{|{{\varphi}}({n})|^2}{2}+{\Pi}(n|\tiln)\Big)d\xi -\frac{\vep}{\lambda} \int_\Omega aa'\Big(\frac{n-\tiln}{\tiln}+\frac{{{\varphi}}(n)}{\s}\Big)d\xi,\\
\mathcal{Y}_b(U)&= -\frac{1}{2}\int_\Omega a' \Big(q-\tilq +\varphi(n) \Big)^2 d\xi  + \int_\Omega a' \varphi(n) \Big(q-\tilq +\varphi(n) \Big) d\xi,\\
\mathcal{Y}_l(U)&=\frac{\vep}{\lambda}\frac{1}{\s} \int_\Omega aa'\Big(q-\tilq +\varphi(n) \Big)d\xi,\\
\mathcal{Y}_s(U)&=-\int_{\Omega^c}a'\Big(\frac{|q-\tilq|^2}{2}+{\Pi}(n|\tiln)\Big)d\xi-\frac{\vep}{\lambda}\int_{\Omega^c}aa'\Big(\frac{n-\tiln}{\tiln}-\frac{q-\tilq}{\s}\Big)d\xi.
\end{aligned}
\end{align*}

Notice that the functional $\mathcal{Y}_g$ is as in \eqref{badgood-n}. We also notice that $\mathcal{Y}_g$ consists of the terms related to $n$,  while $\mathcal{Y}_b$ and $\mathcal{Y}_l$ consist of   terms related to $q$. While $\mathcal{Y}_b$ is quadratic, and $\mathcal{Y}_l$ is linear in $q$. \\

For the bad terms $\mathcal{B}$ in \eqref{bg-max}, we will use the following notations :
\ben\label{bad0}
\mathcal{B}=\mathcal{B}_1+\mathcal{B}_2^I +\mathcal{B}_2^O +\mathcal{B}_3,
\een
where
\begin{align*}
\begin{aligned}
&\mathcal{B}_1(U):= -{\intr} a' \tilq {\Pi}(n|\tiln)d\xi  -\el{\intr}a''\frac{a}{\tiln} \Pi(n|\tiln) d\xi,\\
&\mathcal{B}_2^I(U):=\frac{\sigma}{2} \int_\Omega a' |\varphi(n)|^2 d\xi,\\
&\mathcal{B}_2^O(U) :=  -\int_{\Omega^c} a' \Big[{\Pi}(n|\tiln) + \Big( 1+ \el\frac{a}{\tiln}\Big) (n-\tiln)\Big](q-\tilq)  d\xi,\\
&\mathcal{B}_3(U) :=  -{\intr}a'\Big( 1+ \el\frac{a}{\tiln}\Big)n \Big(\log\frac{n}{\tiln}\Big)   \pa_\xi \Big(\log\frac{n}{\tiln}\Big) d\xi.  
\end{aligned}
\end{align*}
Notice that  $\mathcal{B}_1(U)=\mathcal{I}_1({n})$ and $\mathcal{B}_2^I(U)=\mathcal{B}_2^I(\bar{U})\le\mathcal{I}_2(\bar{n})$ in \eqref{badgood-n}.

We now state the following proposition.

\begin{proposition}\label{prop_out}
There exist constants $\delta_0\in(0,1/2), C, C^*>0$
  such that for any $\delta_0^{-1}\eps<\lambda<\delta_0$, the following statements hold.
\begin{itemize}
\item[1.] For any $U\in\mathcal{H}$ such that $|\mathcal{Y}(U)|\leq \eps^2$,
\begin{eqnarray*}
\label{n1}
&&|\mathcal{B}_1(U)-\mathcal{B}_1(\bar U)| \leq C\sqrt\frac{\eps}{\lambda}  \mathcal{D}(U),\\
\nonumber
&&|\mathcal{B}_2^O(U)|\le C\sqrt\frac{\eps}{\lambda}  \mathcal{D}(U),\\
\nonumber
&&|\mathcal{B}_3(U)| \le 
\delta_0^{1/3}
\mathcal{D}(U) + C\delta_0\frac{\eps}{\lambda} \mathcal{G}_2(\bar U),\\
\label{n2}
&&|\mathcal{B}(U)| \le C^*\frac{\eps^2}{\lambda} + 
\delta_0^{1/4}
 \mathcal{D}(U).
\end{eqnarray*}
\item[2.] For any $U\in\mathcal{H}$ such that $|\mathcal{Y}(U)|\leq \eps^2$ and $\mathcal{D}(U)\leq \frac{C^*}{4}\frac{\eps^2}{\lambda}$,
\begin{align}
\begin{aligned}\label{m1}
&|\mathcal{Y}_b(U)|^2+|\mathcal{Y}_l(U)|^2+|\mathcal{Y}_s(U)|^2 \\
&\quad\le  C\frac{\eps^2}{\lambda}\left(\sqrt{\frac{\eps}{\lambda}}\mathcal{D}(U)+\mathcal{G}_1^O(U) + \left(\frac{\lambda}{\eps}\right)^{1/4}\mathcal{G}_1^I(U) +\left(\frac{\eps}{\lambda}\right)^{1/4}\mathcal{G}_2(\bar U) \right).
\end{aligned}
\end{align}
\end{itemize}
\end{proposition}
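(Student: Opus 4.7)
The plan is to bound each of the bad terms in the decompositions $\mathcal{B}=\mathcal{B}_1+\mathcal{B}_2^I+\mathcal{B}_2^O+\mathcal{B}_3$ and $\mathcal{Y}-\mathcal{Y}_g=\mathcal{Y}_b+\mathcal{Y}_l+\mathcal{Y}_s$ individually, using three recurrent ingredients: (i) the pointwise weight bounds $|a'|\lesssim \lambda\eps$ and $|a''|\lesssim \lambda\eps^2$ inherited from \eqref{def_a} and Lemma \ref{til_prop}; (ii) the global a priori estimate \eqref{eta_small_}, which provides $\int a'\Pi(n|\tiln)\,d\xi + \int a'|q-\tilq|^2\,d\xi \lesssim \eps^2/\lambda$ whenever $|\mathcal{Y}(U)|\le\eps^2$; and (iii) the observation that on the outer set $\Omega^c=\{|n/\tiln-1|>\delta_1\}$, the logarithm $\log(n/\tiln)$ is bounded away from $0$, so the dissipation $\mathcal{D}(U)=\int a\,n\,|\partial_\xi\log(n/\tiln)|^2\,d\xi$ quantitatively controls any integral concentrated there via Cauchy--Schwarz.

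For part 1, I would handle $\mathcal{B}_1(U)-\mathcal{B}_1(\bar U)$ first. Since $\bar n=n$ on $\Omega$, this difference is supported on $\Omega^c$, and the factors $\tilq$ and $(\eps/\lambda)a''(a/\tiln)$ are bounded, so the estimate reduces to showing $\int_{\Omega^c} a'(\Pi(n|\tiln)-\Pi(\bar n|\tiln))\,d\xi \lesssim \sqrt{\eps/\lambda}\,\mathcal{D}(U)$, which comes from the pointwise bound $\Pi \lesssim n|\log(n/\tiln)|^2$ combined with ingredient (iii). For $\mathcal{B}_2^O$, Cauchy--Schwarz splits the product $(\Pi+\cdots)(q-\tilq)$, using $\int a'|q-\tilq|^2 \lesssim \eps^2/\lambda$ on one factor and the outer-dissipation control on the other. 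For $\mathcal{B}_3$, a direct Cauchy--Schwarz against $\mathcal{D}(U)^{1/2}$ leaves a residual $\int (a')^2 (n/a)(\log(n/\tiln))^2\,d\xi$, absorbed by $\mathcal{G}_2(\bar U)$ on $\Omega$ via Taylor expansion of the logarithm and by $\mathcal{D}(U)$ on $\Omega^c$; optimising in $\delta_0$ yields the exponent $\delta_0^{1/3}$. Adding $\mathcal{B}_2^I\lesssim \eps^2/\lambda$, a direct consequence of \eqref{eta_small_} applied to $\bar n$, then gives the total bound on $\mathcal{B}$.

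For part 2, the additional hypothesis $\mathcal{D}(U)\le C^*\eps^2/(4\lambda)$ provides quantitative dissipation control. The functional $\mathcal{Y}_l$ is linear in $(q-\tilq+\varphi(n))$ against $(\eps/\lambda)aa'/\sigma$; Cauchy--Schwarz with $\mathcal{G}_1^I$ and $\int aa'\,d\xi\lesssim \lambda$ gives $|\mathcal{Y}_l|^2 \lesssim (\eps^2/\lambda)(\lambda/\eps)^{1/4}\mathcal{G}_1^I$ after rearrangement. The functional $\mathcal{Y}_b$ splits into a $(q-\tilq+\varphi)^2$ piece bounded directly by $\mathcal{G}_1^I$ and a cross term controlled via $\int a'\varphi^2\,d\xi \lesssim \mathcal{G}_2(\bar U)$ (Taylor expansion of $\varphi$ on $\Omega$). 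Finally $\mathcal{Y}_s$ is supported on $\Omega^c$: Cauchy--Schwarz with $\int_{\Omega^c} a'|q-\tilq|^2\lesssim \mathcal{G}_1^O$ handles the $q$-contributions, while the $n$-contributions are absorbed by $\mathcal{D}(U)$ via ingredient (iii).

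The technical core --- and the main obstacle --- is ingredient (iii): converting the lower bound $|\log(n/\tiln)|\gtrsim \delta_1$ on $\Omega^c$, together with the relative-entropy smallness forced by \eqref{eta_small_} and $U\in\mathcal{H}$, into quantitative $L^1$-smallness of $\int_{\Omega^c} a'\Pi(n|\tiln)\,d\xi$ with the correct loss $\sqrt{\eps/\lambda}$ compared to the corresponding $\Omega$ estimate. The remaining delicate point is the bookkeeping of powers of $\eps$, $\lambda$, and $\delta_0$ so that the final inequalities come out with exactly the exponents $(\eps/\lambda)^{1/2}$, $(\eps/\lambda)^{1/4}$, $\delta_0^{1/3}$, and $\delta_0^{1/4}$ demanded by Proposition \ref{main_prop}.
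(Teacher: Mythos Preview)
Your overall architecture matches the paper's: decompose $\mathcal{B}$ and $\mathcal{Y}-\mathcal{Y}_g$ termwise, use \eqref{eta_small_} for the global budget, Cauchy--Schwarz/Young on $\mathcal{B}_3$ splitting into $\Omega$ and $\Omega^c$ contributions, and handle $\mathcal{Y}_b,\mathcal{Y}_l,\mathcal{Y}_s$ against $\mathcal{G}_1^I,\mathcal{G}_1^O,\mathcal{G}_2(\bar U)$ as you describe. That part is fine.

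The genuine gap is your ingredient (iii). You write that on $\Omega^c$ the logarithm $\log(n/\tiln)$ is bounded away from zero, ``so the dissipation $\mathcal{D}(U)=\int a\,n\,|\partial_\xi\log(n/\tiln)|^2\,d\xi$ quantitatively controls any integral concentrated there via Cauchy--Schwarz.'' This is a non sequitur: $\mathcal{D}$ involves the \emph{derivative} of $\log(n/\tiln)$, and a pointwise lower bound on $|\log(n/\tiln)|$ says nothing about $|\partial_\xi\log(n/\tiln)|$. No direct Cauchy--Schwarz with $\mathcal{D}$ will produce $\int_{\Omega^c}a'\Pi(n|\tiln)\,d\xi\lesssim\sqrt{\eps/\lambda}\,\mathcal{D}$, and in particular the exponent $\sqrt{\eps/\lambda}$ has no place to come from in your sketch.

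The paper's mechanism is considerably more indirect. First (Lemma~\ref{lem_pointwise}), one uses \eqref{eta_small_} and the mean-value theorem to locate a reference point $\xi_0\in[-1/\eps,1/\eps]$ where $|n(\xi_0)/\tiln(\xi_0)-1|\ll\delta_1$; then the fundamental theorem of calculus applied to $\sqrt{n/\tiln}$, combined with Cauchy--Schwarz against $\mathcal{D}$, yields the \emph{pointwise} bound
\[
|n(\xi)-\tiln(\xi)|\ \le\ C\Big(\tfrac{1}{\eps}+|\xi|\Big)\,\mathcal{D}(n)\qquad\text{on }\Omega^c.
\]
Second (Lemma~\ref{lem_B_log}), one multiplies this by $a'\sqrt{\Pi(n|\tiln)}$ (or the appropriate variant), splits the integral at $|\xi|=\tfrac{1}{\eps}\sqrt{\lambda/\eps}$, and on the far region invokes the exponential decay $a'\lesssim\lambda\eps\,e^{-c\eps|\xi|}$ from \eqref{tilde_estimate}. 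The choice of splitting scale is what produces the factor $\sqrt{\eps/\lambda}$. Once these two lemmas are in hand, your termwise bookkeeping for $\mathcal{B}_1-\mathcal{B}_1(\bar U)$, $\mathcal{B}_2^O$, $\mathcal{B}_3$ and for $\mathcal{Y}_s$ goes through essentially as you outlined; but without them the core estimates on $\Omega^c$ are simply unavailable.
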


\subsection{Proof of Proposition \ref{prop_out}}
We will first derive a point-wise estimate on $|n-\tiln|{\mathbf 1}_{\{ |\frac{n}{\tiln}-1|> \delta_{1} \}}$ as follows:

\begin{lemma}\la{lem_pointwise}
For a sufficiently small $\delta_0>0$, there exists $C>0$  such that for any $\delta_0^{-1}\eps<\lambda<\delta_0$ and any $U\in\mathcal{H}$ satisfying $|\mathcal{Y}(U)|\leq \vep^2$, the following estimates hold:
    \be\la{est_pointwise}\begin{split}
    &|n(\xi)-\tiln(\xi)|\leq C\Big(\frac{1}{\vep}+|\xi|\Big)\mathcal{D}(n)
   \end{split}\ee
whenever $\xi\in\mathbb{R}$ satisfies \be\la{assump1}\Big |\frac{n(\xi)}{\tiln(\xi)}-1\Big|\geq \delta_1.\ee
   \end{lemma}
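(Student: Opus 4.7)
My plan is to find a nearby reference point $\xi^\ast$ where $n/\tiln$ is close to $1$, integrate the identity
\[
(n-\tiln)'(\zeta) = n(\zeta)\,\partial_\zeta\log\bigl(n/\tiln\bigr)(\zeta) + \tiln'(\zeta)\bigl((n/\tiln)(\zeta)-1\bigr)
\]
from $\xi^\ast$ to $\xi$, and close the estimate by a squaring trick that exploits the pointwise lower bound $|n-\tiln|(\xi)\geq c\delta_1$ coming from hypothesis \eqref{assump1}. Without loss of generality I will assume $(n/\tiln)(\xi)\geq 1+\delta_1$; the case $(n/\tiln)(\xi)\leq 1-\delta_1$ is symmetric.

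The first step is to exhibit $\xi^\ast\in\bbr$ with $|\xi^\ast-\xi|\leq C(1/\eps+|\xi|)$ and $|(n/\tiln)(\xi^\ast)-1|\leq\delta_1/4$. By \eqref{upper_1/2}, $\Pi(n|\tiln)\geq c>0$ on the set $\{|(n/\tiln)-1|\geq\delta_1/4\}$; if the whole interval $[\xi-L,\xi+L]$ with $L:=|\xi|+1/\eps$ were contained in this set, combining the weighted bound $\int a'\Pi(n|\tiln)\,d\zeta\leq C\eps^2/\lambda$ from Lemma~\ref{lem_ey} with the identity $a'=-(\lambda/\eps)\tiln'$ would yield
\[
\frac{\lambda}{\eps}\bigl(\tiln(\xi-L)-\tiln(\xi+L)\bigr)=\int_{\xi-L}^{\xi+L}a'\,d\zeta\leq\frac{C}{c}\frac{\eps^2}{\lambda}.
\]
Since $[\xi-L,\xi+L]\supset[-1/\eps,1/\eps]$, the lower bound on $|\tiln'|$ from \eqref{tilde_estimate} forces $\tiln(\xi-L)-\tiln(\xi+L)\geq c\eps$, leading to $\lambda^2\leq C\eps^2$, in contradiction with $\eps<\delta_0\lambda$.

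With $\xi^\ast$ fixed, Cauchy--Schwarz on the first integrand (using $a\geq 1$ to absorb the weight) gives
\[
\Big|\int_{\xi^\ast}^{\xi} n\,\partial_\zeta\log(n/\tiln)\,d\zeta\Big|^2 \leq \|n\|_{L^\infty}\,|\xi-\xi^\ast|\,\mathcal{D}(n) \leq C(1/\eps+|\xi|)\,\mathcal{D}(n),
\]
while the second integrand is bounded by $\|(n/\tiln)-1\|_{L^\infty}|\tiln'|$, with $\int_{\bbr}|\tiln'|\,d\zeta=n_--n_+=\eps$, contributing $O(\eps)$. Since $|(n/\tiln)(\xi^\ast)-1|\leq\delta_1/4$ versus $(n/\tiln)(\xi)-1\geq\delta_1$ and the ratio $\tiln(\xi^\ast)/\tiln(\xi)$ is close to $1$ for small $\eps$, one obtains $|(n-\tiln)(\xi^\ast)|\leq\tfrac{1}{2}|(n-\tiln)(\xi)|$; squaring then produces
\[
|(n-\tiln)(\xi)|^{2} \leq C(1/\eps+|\xi|)\,\mathcal{D}(n) + C\eps^{2}.
\]

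The closing argument removes the residual $\eps^2$. Applying Cauchy--Schwarz to $\log(n/\tiln)$ between $\xi^\ast$ and $\xi$ and using $|\log(n/\tiln)(\xi)-\log(n/\tiln)(\xi^\ast)|\geq c\delta_1$ gives the complementary bound $(1/\eps+|\xi|)\mathcal{D}(n)\geq c>0$, which absorbs $C\eps^2$ for $\eps$ sufficiently small. The hypothesis then gives $|(n-\tiln)(\xi)|\geq c\delta_1$, whence $|(n-\tiln)(\xi)|^2\geq c\delta_1\,|(n-\tiln)(\xi)|$; dividing the previous display by $c\delta_1$ yields the desired pointwise bound. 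The main obstacle is the two-sided interplay with the dissipation: \eqref{eta_small_} is needed both to produce the reference point $\xi^\ast$ and, through the second Cauchy--Schwarz, to furnish the lower bound on $\mathcal{D}(n)$ that absorbs the $\eps^2$ remainder, with the scaling $\lambda>\delta_0^{-1}\eps$ being what makes these two contradictions close.
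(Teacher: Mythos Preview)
Your approach has a genuine gap: the constants you obtain depend on $\|n\|_{L^\infty}$ (and on $\|(n/\tiln)-1\|_{L^\infty}$), which is \emph{not} uniformly bounded over the class $\mathcal{H}$ under the sole constraint $|\mathcal{Y}(U)|\le\eps^2$. The whole point of the lemma is to handle arbitrarily large perturbations. Concretely, your Cauchy--Schwarz step
\[
\Big|\int_{\xi^\ast}^{\xi} n\,\partial_\zeta\log(n/\tiln)\,d\zeta\Big|^2 \leq \|n\|_{L^\infty}\,|\xi-\xi^\ast|\,\mathcal{D}(n)
\]
is correct, but the subsequent inequality $\leq C(1/\eps+|\xi|)\mathcal{D}(n)$ with $C$ independent of $U$ is unjustified; likewise the claim that $\int_{\xi^\ast}^\xi \tiln'((n/\tiln)-1)\,d\zeta=O(\eps)$ needs a uniform bound on $n/\tiln$ over $[\xi^\ast,\xi]$, which you do not have. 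The same issue contaminates your closing argument, since relating $\int|\partial_\zeta\log(n/\tiln)|^2$ to $\mathcal{D}(n)$ requires a uniform lower bound on $n$.

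The paper circumvents exactly this by integrating the derivative of $\sqrt{n/\tiln}$ rather than of $n-\tiln$. Writing
\[
\partial_\zeta\sqrt{n/\tiln}=\tfrac{1}{2}\sqrt{n/\tiln}\,\partial_\zeta\log(n/\tiln)
=\frac{1}{2\sqrt{a\tiln}}\cdot\sqrt{an}\,\partial_\zeta\log(n/\tiln),
\]
Cauchy--Schwarz now produces the harmless factor $\int 1/(a\tiln)\,d\zeta$, which \emph{is} uniformly bounded, rather than $\int n\,d\zeta$. One then uses the elementary claim that if $|y_0-1|$ is small and $|y-1|\ge\delta_1$ then $|y-1|\le L\,|\sqrt{y}-\sqrt{y_0}|^2$, which plays the role of your ``squaring trick'' but without ever invoking $\|n\|_{L^\infty}$. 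Your construction of the reference point $\xi^\ast$ via the averaging argument on $[-1/\eps,1/\eps]$ is essentially the same as the paper's; it is only the subsequent integration identity that needs to be replaced.
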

   \begin{remark}
   Recall that we assumed $\tiln(0)=(n_-+n_+)/2$.
   \end{remark}
   \begin{proof}
We set $\alpha  :=\frac{1}{\lam}  \int_{-1/\vep}^{1/\vep}a'\,d\xi$.  Using $\frac{1}{\lam}  \intr a' d\xi =1$ and $a'=(\lambda/\eps)|\tiln'|$ together with \eqref{tilde_estimate}, we obtain
$$ \frac{1}{2}(1-e^{-1/\s_-})\leq \alpha \leq 1.$$ 
Notice that $ \frac{1}{2}(1-e^{-1/\s_-})$ is a positive constant.\\
Since \eqref{eta_small_} implies
$$\int_{-1/\vep}^{1/\vep} a'{\Pi}(n|\tiln)d\xi\leq C\eel,$$
we have
  $$\int_{-1/\vep}^{1/\vep}\frac{a'}{\lam\alpha}{\Pi}(n|\tiln)d\xi\leq C\Big(\frac{\vep}{\lam}\Big)^2.$$ 
Since $\int_{-1/\vep}^{1/\vep}\frac{a'}{\lam\alpha}\,d\xi=1$,  there exists a point $\xi_0\in[-\frac{1}{\vep},\frac{1}{\vep}]$ such that
   \be\la{est_pi_temp}   {\Pi}(n(\xi_0)|\tiln(\xi_0))\leq   \tilde{C}\Big(\frac{\vep}{\lam}\Big)^2\le \tilde{C}\cdot(\delta_0)^2\ee where $\tilde{C}$ is some constant. We take $\delta_0$ small enough to get
$$\tilde{C}\cdot(\delta_0)^2\leq C_2/2$$ where $C_2$ is the constant in \eqref{Q_global_} by plugging $\delta=\delta_1$. 
  
We observe that \eqref{Q_loc_} and \eqref{Q_global_} imply
\begin{align*}
\begin{aligned}
 {\Pi}(n(\xi_0)| \tiln(\xi_0)) &\ge \min\Big(C_1^{-1}|n(\xi_0)-\tiln(\xi_0)|^2, C_2^{-1} (1+n(\xi_0)\log^+ \frac{n(\xi_0)}{\tiln(\xi_0)})\Big)\\
&\ge \min\Big(C_1^{-1}|n(\xi_0)-\tiln(\xi_0)|^2, C_2^{-1} \Big).
\end{aligned}
\end{align*}  Then from \eqref{est_pi_temp}, we get
\begin{align*}
\begin{aligned}
 |n(\xi_0)-\tiln(\xi_0)|^2  &\le C_1{\Pi}(n(\xi_0)| \tiln(\xi_0))
 \le C_1\tilde{C}\cdot(\delta_0)^2.
\end{aligned}
\end{align*} 
Thus, by taking  $\delta_0$ small enough, we can assume that
\be\la{xi_est}
\Big |\frac{n(\xi_0)}{\tiln(\xi_0)}-1\Big| 
\leq \min\{\frac{\delta_1}{2},\frac{(\sqrt{1+\delta_1}-1)(\sqrt{1-(\delta_1/2)}+1)}{2}\}. 
\ee
For the reference point $\xi_0$, since for any $\xi\in\mathbb{R}$,
\[
\begin{split}
\sqrt{\frac{n}{\tiln}(\xi)}-\sqrt{\frac{n}{\tiln}(\xi_0)}& 
=\int_{\xi_0}^{\xi}\frac{d}{d\zeta}\sqrt{\frac{n}{\tiln}(\zeta)}d\zeta\\&=
\int_{\xi_0}^{\xi}\frac{1}{2}\sqrt{\frac{\tiln}{n}(\zeta)}\frac{d}{d\zeta}{\frac{n}{\tiln}(\zeta)}d\zeta=\int_{\xi_0}^{\xi}\frac{1}{2}\sqrt{\frac{n}{\tiln}(\zeta)}\frac{d}{d\zeta}\log\Big({\frac{n}{\tiln}(\zeta)}\Big)d\zeta,
\end{split}
\]
we have
 \be\la{cal_differentiation_2}\begin{split}
&\Big|\sqrt{\frac{n}{\tiln}(\xi)}-\sqrt{\frac{n}{\tiln}(\xi_0)}\Big| 
=\Big| 
\int_{\xi_0}^{\xi}\frac{1}{2\sqrt{a(\zeta)n(\zeta)}}\sqrt{\frac{n}{\tiln}(\zeta)}\sqrt{a(\zeta)n(\zeta)}\frac{d}{d\zeta}\log\Big({\frac{n}{\tiln}(\zeta)}\Big)d\zeta
\Big|\\
&\quad\quad\quad\leq \sqrt{
\int_{\xi_0}^{\xi}\frac{1}{4{a(\zeta)n(\zeta)}}{\frac{n}{\tiln}(\zeta)}
d\xi}
\sqrt{\int_{\xi_0}^{\xi} {a(\zeta)n(\zeta)}\Big|\frac{d}{d\zeta}\log\Big({\frac{n}{\tiln}(\zeta)}\Big)\Big|^2d\zeta}\\
&\quad\quad\leq \sqrt{\frac{1}{2 n_-}
\int_{\xi_0}^{\xi}1
d\xi}
\sqrt{\mathcal{D}(n)}\leq \sqrt{\frac{1}{2 n_-}}
\sqrt{|\xi-\xi_0|}
\sqrt{\mathcal{D}(n)}
\leq\sqrt{\frac{1}{2 n_-}}
\sqrt{|\xi|+\frac{1}{\vep}}
\sqrt{\mathcal{D}(n)}.
\end{split}\ee
 
On the other hand, we claim that there exists $L=L(\delta_1)>0$ such that if $y>0$ and $y_0>0$ with $$|y_0-1|\leq \min\{\frac{\delta_1}{2},\frac{(\sqrt{1+\delta_1}-1)(\sqrt{1-(\delta_1/2)}+1)}{2}\}\quad\mbox{and } |y-1|\geq\delta_1
,$$ then 
\beq\label{claim-yy}
|y-1|\leq L|\sqrt{y}-\sqrt{y_0}|^2.
\eeq
Indeed, we can split it into two cases: $0<y\leq 1-\delta_1$ and $y\geq1+\delta_1$. \\
Denote $\beta:=|\sqrt{y}-\sqrt{y_0}|>0$.\\
For the first case $0<y\leq1-\delta_1$, since
$y\leq 1-\delta_1<1-(\delta_1/2)\leq y_0\leq 1+(\delta_1/2)$, we have
$$\frac{\delta_1}{2}\leq|y-y_0|\leq \beta|\sqrt{y}+\sqrt{y_0}|\leq
2\beta\sqrt{y_0}\leq 2\beta\sqrt{1+(\delta_1/2)}\leq 4\beta.$$ 
Thus we get
$1\leq\frac{8\beta}{\delta_1}$. Therefore
$$|y-1|=1-y\leq 1=1^2\leq\frac{64}{(\delta_1)^2}\beta^2.$$

For the second case $y\geq 1+\delta_1$, since
$$|\sqrt{y_0}-1|=\frac{|y_0-1|}{\sqrt{y_0}+1}\leq\frac{|y_0-1|}{\sqrt{1-(\delta_1/2)}+1}\leq  \frac{(\sqrt{1+\delta_1}-1)}{2}\leq\frac{ \sqrt{y}-1}{2}.
$$
we have
\beq\la{instant-y}
\beta=|(\sqrt{y}-1)-(\sqrt{y_0}-1)|
\geq|\sqrt{y}-1|-|\sqrt{y_0}-1|\geq \frac{ \sqrt{y}-1}{2}.
\eeq
Thus we get 
 $$1+\delta_1\leq y\leq(2\beta+1)^2,$$
which yields 
\[
0<\delta_1\leq 4\beta(\beta+1).
\]
Let $\beta_0=\beta_0(\delta_1)$ be the positive constant satisfying $4\beta_0(\beta_0+1)=\delta_1$. \\
Since $4\beta_0(\beta_0+1) \le 4\beta(\beta+1),$ we have $\beta\geq\beta_0$ so $1\leq \frac{\beta}{\beta_0}$.\\
Therefore, using \eqref{instant-y}, we get
$$|y-1|=y-1=(\sqrt{y}-1)((\sqrt{y}-1)+2)\leq 2\beta(2\beta+2)
\leq4\beta(\beta+\frac{\beta}{\beta_0})=4(1+\frac{1}{\beta_0})\beta^2.
$$ It proves the above claim \eqref{claim-yy} by taking $L:=\frac{64}{(\delta_1)^2}+4(1+\frac{1}{\beta_0})$.\\

By considering $y:=\frac{n}{\tiln}(\xi)$ and $y_0:=\frac{n}{\tiln}(\xi_0)$ in the claim \eqref{claim-yy} together with \eqref{xi_est} and \eqref{assump1}, it follows from \eqref{cal_differentiation_2} that
 \ben\la{est_pointwise_}\begin{split}
    &|n(\xi)-\tiln(\xi)|\leq 
(n_-)|y-1|\leq(n_-)L|\sqrt{y}-\sqrt{y_0}|^2    
    \leq C\Big(\frac{1}{\vep}+|\xi|\Big)\mathcal{D}(n).
   \end{split}\een
\end{proof}

\begin{lemma}\la{lem_B_log}
 Under the same assumption as in Lemma \ref{lem_pointwise}, we have
\be\la{est_B1}
\int_{\Omega^c} a' \Big(1+n\log^+ \frac{n}{\tiln}\Big) d\xi\leq C\sqrt{\el}\mathcal{D}(n),
\ee
\be\la{est_B2}
\int_{\Omega^c} a' \Big(1+n\Big[\log^+ \frac{n}{\tiln}\Big]^2\Big)d\xi\leq C\sqrt{\el}\mathcal{D}(n),
\ee
\be\la{est_B3}
\int_{\Omega^c} a' |q-\tilq| \Big(1+n\log^+ \frac{n}{\tiln}\Big) d\xi \leq C\sqrt{\el}\mathcal{D}(n).
\ee
\end{lemma}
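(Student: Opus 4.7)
The plan is to reduce all three estimates to bounds on $I := \int_{\Omega^c} a'\,\Pi(n|\tiln)\,d\xi$ (with an additional $|q-\tilq|$ factor for \eqref{est_B3}) and then run a two-regime analysis on the size of $\mathcal{D}(n)$. For \eqref{est_B1}, the reduction is $1+n\log^+(n/\tiln) \leq C\,\Pi(n|\tiln)$ on $\Omega^c$, which is \eqref{Q_global_}. For \eqref{est_B2}, I write $n[\log^+(n/\tiln)]^2 = \log^+(n/\tiln)\cdot n\log^+(n/\tiln)$ and bound the second factor by $C\,\Pi(n|\tiln)$ via \eqref{Q_global_}; the extra logarithm will be controlled through the pointwise estimate of Lemma \ref{lem_pointwise} and absorbed by the exponential decay of $a'$ in the small-$\mathcal{D}$ regime below. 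For \eqref{est_B3}, a Cauchy--Schwarz separates $|q-\tilq|$ from the rest: one factor is $\sqrt{\int a'(q-\tilq)^2\,d\xi} \leq C\eps/\sqrt{\lam}$ by \eqref{eta_small_}, and the other is an \eqref{est_B1}/\eqref{est_B2}-type quantity.

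The central step is bounding $I$. In \textbf{Regime A}, where $\mathcal{D}(n) \geq \eps^{3/2}/\sqrt{\lam}$, I use the global a priori bound $\int_{\bbr} a'\Pi(n|\tiln)\,d\xi \leq C\eps^2/\lam$ from \eqref{eta_small_} and observe that $\eps^2/\lam = \sqrt{\eps/\lam}\cdot\eps^{3/2}/\sqrt{\lam} \leq \sqrt{\eps/\lam}\,\mathcal{D}(n)$. In \textbf{Regime B}, where $\mathcal{D}(n) < \eps^{3/2}/\sqrt{\lam}$ (in particular $\mathcal{D}(n) \ll \eps$ because $\eps/\lam<\delta_0$ is small), I combine the pointwise bound $|n-\tiln| \leq C(1/\eps+|\xi|)\mathcal{D}(n)$ from Lemma \ref{lem_pointwise} with the lower bound $|n-\tiln| \geq \delta_1 n_+$ on $\Omega^c$ to force $|\xi| \geq c/\mathcal{D}(n)$ throughout $\Omega^c$. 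Then $\Pi \leq C|n-\tiln|^2 \leq C(1/\eps+|\xi|)^2\mathcal{D}^2$ by \eqref{upper_1/2} together with $a'(\xi) \leq C\lam\eps\,e^{-\eps|\xi|/\s_-}$ from \eqref{tilde_estimate} give, after the change of variable $u=\eps\xi$,
\[
I \leq C\lam\,\mathcal{D}^2 \int_{|\xi|\geq c/\mathcal{D}} \eps(1/\eps+|\xi|)^2 e^{-\eps|\xi|/\s_-}\,d\xi \leq C\lam\,e^{-c'\eps/(\s_-\mathcal{D})}.
\]
Since $\eps/\mathcal{D}(n) \geq \sqrt{\lam/\eps}$ in Regime B, the exponential factor dominates any polynomial in $\lam/\eps$, yielding $I \leq C\sqrt{\eps/\lam}\,\mathcal{D}(n)$ once $\delta_0$ is small enough.

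The main obstacle is verifying that the two regimes overlap at the crossover $\mathcal{D} \sim \eps^{3/2}/\sqrt{\lam}$: the required bound $\lam\,e^{-c'\sqrt{\lam/\eps}/\s_-} \leq \eps^2/\lam$ reduces to $2\log(\lam/\eps) \leq c'\sqrt{\lam/\eps}/\s_-$, which is secured by taking $\delta_0^{-1}$, and hence $\lam/\eps$, sufficiently large. A secondary technical point for \eqref{est_B2} is that the additional factor $\log^+(n/\tiln)$ is at most $\log(1+C(1/\eps+|\xi|)\mathcal{D}(n)/n_+)$ by the pointwise bound, which grows only logarithmically in $|\xi|$ and $\mathcal{D}^{-1}$ and is therefore harmlessly absorbed by the exponential decay in Regime B and by the slack $\eps^2/\lam \ll \sqrt{\eps/\lam}\,\mathcal{D}$ in Regime A. For \eqref{est_B3}, the Cauchy--Schwarz split above keeps the argument structurally parallel and introduces no new difficulty beyond re-running the two-regime analysis for the second factor.
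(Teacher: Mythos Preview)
Your two-regime split on the size of $\mathcal{D}$ is a different organization from the paper's, which always splits \emph{spatially} at $|\xi|=\frac{1}{\eps}\sqrt{\lambda/\eps}$ regardless of $\mathcal{D}$. For \eqref{est_B1} your scheme does go through: the reduction $1+n\log^+\!\le C\Pi$ on $\Omega^c$ is \eqref{Q_global_}, Regime~A uses $I:=\int_{\Omega^c}a'\Pi\le C\eps^2/\lambda\le C\sqrt{\eps/\lambda}\,\mathcal{D}$, and Regime~B correctly forces $\Omega^c\subset\{|\xi|\ge c/\mathcal{D}\}$ and then uses the exponential decay of $a'$.

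The gap is in Regime~A for \eqref{est_B2} and \eqref{est_B3}. For \eqref{est_B2}, your reduction $n[\log^+]^2\le C\log^+\!\cdot\Pi$ leaves an extra factor $\log^+(n/\tiln)\le\log\!\big(1+C(1/\eps+|\xi|)\mathcal{D}\big)$. At the threshold $\mathcal{D}=\eps^{3/2}/\sqrt{\lambda}$ one has $\eps^2/\lambda=\sqrt{\eps/\lambda}\,\mathcal{D}$ exactly, so the ``slack'' you invoke does not exist there; and for large $|\xi|$ the $\xi$--dependence of that logarithm cannot be pulled outside $\int_{\Omega^c}a'\Pi$, nor absorbed back into $a'$ (the two-sided bound $a'\sim\lambda\eps\,e^{-\eps|\xi|/\sigma_-}$ leaves no room in the exponent). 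For \eqref{est_B3}, your Cauchy--Schwarz produces $\int_{\Omega^c}a'(1+n\log^+)^2$; but $(1+n\log^+)^2\sim\Pi^2$ for large $n$, and you have no control of $\int_{\Omega^c}a'\Pi^2$ in Regime~A. So the claim that this ``introduces no new difficulty'' is not justified.

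The paper sidesteps this by first establishing pointwise inequalities on $\Omega^c$ of the shape
\[
1+n\big[\log^+(n/\tiln)\big]^2\le C\,\Pi(n|\tiln)^{1/2}\,|n-\tiln|,\qquad
1+n\log^+(n/\tiln)\le C\,\Pi(n|\tiln)^{1/4}\,|n-\tiln|,
\]
which extract one full factor $|n-\tiln|$ (then bounded by $(1/\eps+|\xi|)\mathcal{D}$ via Lemma~\ref{lem_pointwise}) while leaving only a \emph{sublinear} power of $\Pi$. In the far region $|\xi|>\frac{1}{\eps}\sqrt{\lambda/\eps}$ this sublinear power is paired by H\"older with $|\xi|$--moments of $a'$ (and with $|q-\tilq|$ for \eqref{est_B3}); in the near region the sup of $|n-\tiln|$ is pulled out. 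This splitting is precisely what your reductions lack, and without it the Regime~A arguments for \eqref{est_B2}--\eqref{est_B3} do not close.
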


\begin{proof}
 $\bullet$ {\it proof of \eqref{est_B1}} : 
Since $\log^+ \frac{n}{\tiln}\leq \frac{1}{\log(1+\delta_1)}\Big[\log^+ \frac{n}{\tiln}\Big]^2$ whenever $|\frac{n}{\tiln}-1|\geq\delta_1$, the desired result \eqref{est_B1} follows from \eqref{est_B2}.

 $\bullet$ {\it proof of \eqref{est_B2}} :   
Since if $n$ satisfies $\frac{n}{\tiln}-1\le -\delta_1$ then 
\[
\log^+ \frac{n}{\tiln}=0,
\]
and
\[
|n-\tiln|=\tiln-n\geq (\delta_1\tiln)\geq\Big(\delta_1\frac{n_-}{2}\Big)>0,
\quad {\Pi}(n|\tiln)
\geq C_{2}>0,
\]
we find that there exists a constant $C>0$ (depending on $\delta_1$) such that
   \ben\la{log_1}\begin{split}
&   \Big(1+n\Big[\log^+ \frac{n}{\tiln}\Big]^2\Big){1}_{\{\frac{n}{\tiln}-1\leq-\delta_1\}} \leq C \sqrt{{\Pi}(n|\tiln)} |n-\tiln| {1}_{\{\frac{n}{\tiln}-1\leq-\delta_1\}}.
   \end{split}\een
   
Since if $n$ satisfies $\frac{n}{\tiln}-1\ge\delta_1$ then (by \eqref{Q_global_})
\ben\la{m_est_2}
 {\Pi}(n|\tiln)\geq \frac{1}{C_{2}}(1+[\log(1+\delta_1)]\cdot n ),
\een 
using the inequality:   
 \be\la{log_com}    
\Big(1+n\Big[\log^+ \frac{n}{\tiln}\Big]^2\Big)\leq 
1+n\tau^2 \Big( \frac{n}{\tiln}\Big)^{1/6}  
\le
1+[\tau^2(2/n_-)^{1/6}]n^{7/6},\quad \tau:=\sup_{y\in[1+\delta_1,\infty)}\frac{\log y}{y^{1/12}}<\infty,
\ee
we find that there exists a constant $C>0$ such that 
\[  
 \Big(1+n\Big[\log^+ \frac{n}{\tiln}\Big]^2\Big){1}_{\{\frac{n}{\tiln}-1\geq\delta_1\}}\leq  \sqrt{{\Pi}(n|\tiln)} |n-\tiln| {1}_{\{\frac{n}{\tiln}-1 \ge\delta_1\}}.  
\]
Indeed for large $n$, the left-hand side is bounded above by $C(1+Cn^{7/6})$ while the right one is bounded below by $ \frac{1}{C}(1+\frac{1}{C}n^{3/2})$.\\
   
 By combining these two cases, we obtain 
\[
 \Big(1+n\Big[\log^+ \frac{n}{\tiln}\Big]^2\Big){1}_{\{|\frac{n}{\tiln}-1|\geq\delta_1\}}\leq  \sqrt{{\Pi}(n|\tiln)} |n-\tiln| {1}_{\{|\frac{n}{\tiln}-1| \ge\delta_1\}}.  
\] 
Then, we have
  \ben\begin{split}
&\intr a' \Big(1+n\Big[\log^+ \frac{n}{\tiln}\Big]^2\Big){1}_{\{|\frac{n}{\tiln}-1|\geq\delta_1\}} d\xi \le \intr a' \sqrt{\Pi(n|\tiln)} |n-\tiln| {1}_{\{|\frac{n}{\tiln}-1|\geq\delta_1\}} d\xi \\
&\quad \le  \underbrace{\int_{|\xi|\le\frac{1}{\eps}\sqrt{\frac{\lambda}{\eps}}} a'\sqrt{\Pi(n|\tiln)} |n-\tiln| {1}_{\{|\frac{n}{\tiln}-1|\geq\delta_1\}} d\xi}_{=:J_1} +\underbrace{\int_{|\xi|\ge\frac{1}{\eps}\sqrt{\frac{\lambda}{\eps}}} a'\sqrt{\Pi(n|\tiln)} |n-\tiln| {1}_{\{|\frac{n}{\tiln}-1|\geq\delta_1\}} d\xi}_{=:J_2}.
\end{split}\een
Since it follows from \eqref{upper_1/2} that $\delta_1\le C|n-\tiln| \le C\Pi(n|\tiln)$ whenever $|\frac{n}{\tiln}-1|\geq\delta_{1}$, we use \eqref{est_pointwise} and \eqref{eta_small_} to find that there exists a constant $C>0$ (depending on $\delta_1$) such that
\ben\begin{split}
J_1&\leq\left(\sup_{\left[-\sqrt{\frac{\lambda}{\eps^3}},\sqrt{\frac{\lambda}{\eps^3}}\right]\cap\Omega^c}|{n}-{\tiln}| \right)  \int_{|\xi|\le\frac{1}{\eps}\sqrt{\frac{\lambda}{\eps}}} a' \sqrt{\Pi(n|\tiln)} d\xi \\
& \leq C\frac{1}{\eps}\sqrt{\frac{\lambda}{\eps}} \mathcal{D}(U)   \int_\bbr a' \Pi(n|\tiln) \,d\xi\\
& \leq C\sqrt{\frac{\eps}{\lambda}} \mathcal{D}(U).
\end{split}\een
Using  \eqref{est_pointwise} and \eqref{eta_small_}, we have
\ben\begin{split}
J_2&\leq C\mathcal{D}(U)  \int_{|\xi|\geq\frac{1}{\eps}\sqrt{\frac{\lambda}{\eps}}} a' \sqrt{\Pi(n|\tiln)} \left(|\xi|+\frac{1}{\eps}\right)\,d\xi\\
& \leq  C\mathcal{D}(U) \Big( \intr a' \Pi(n|\tiln) \,d\xi\Big)^{1/2} \Big( \int_{|\xi|\geq\frac{1}{\eps}\sqrt{\frac{\lambda}{\eps}}} a' |\xi|^2 \,d\xi\Big)^{1/2}\\
& \leq  C \mathcal{D}(U) \sqrt{\frac{\eps^2}{\lambda}} \Big( \int_{|\xi|\geq\frac{1}{\eps}\sqrt{\frac{\lambda}{\eps}}} a' |\xi|^2 \,d\xi\Big)^{1/2}.
\end{split}\een
Notice that
\[
 \int_{|\xi|\geq\frac{1}{\eps}\sqrt{\frac{\lambda}{\eps}}} a' |\xi|^2 \,d\xi \le C\eps\lambda\int_{|\xi|\geq\frac{1}{\eps}\sqrt{\frac{\lambda}{\eps}}}e^{-c\eps|\xi|} |\xi|^2 \,d\xi \le C\frac{\lambda}{\eps^2} \int_{|\xi|\geq \sqrt{\frac{\lambda}{\eps}}} |\xi|^2 e^{-c|\xi|}d\xi.
\]
Taking $\delta_0$ small enough such that for any $\eps/\lambda\leq \delta_0$, $|\xi|^2\leq e^{(c/2)|\xi|}$ for $\xi\geq \sqrt{\lambda/\eps}$ and
$$
\int_{|\xi|\geq \sqrt{\frac{\lambda}{\eps}}} |\xi|^2 e^{-c|\xi|}d\xi \le \int_{|\xi|\geq \sqrt{\frac{\lambda}{\eps}}} e^{-\frac{c}{2}|\xi|}d\xi = Ce^{-\frac{c}{2}\sqrt{\frac{\lambda}{\eps}}}\leq C\frac{\eps}{\lambda},
$$  we have
\[
J_2\le C\sqrt{\frac{\eps}{\lambda}} \mathcal{D}(U),
\]
which gives the desired estimate.

$\bullet$ {\it proof of \eqref{est_B3}} :   
Following the same estimates together with \eqref{log_com} as above, and using $\log^+ \frac{n}{\tiln}\leq \frac{1}{\log(1+\delta_1)}\Big[\log^+ \frac{n}{\tiln}\Big]^2$, we find that there exists a constant $C>0$ such that 
\[  
 \Big(1+n \log^+ \frac{n}{\tiln} \Big){1}_{\{\frac{n}{\tiln}-1\geq\delta_1\}}\leq  {\Pi}(n|\tiln)^{1/4} |n-\tiln| {1}_{\{\frac{n}{\tiln}-1 \ge\delta_1\}}.  
\]
Indeed for large $n$, the right-hand side is bounded below by $ \frac{1}{C}(1+\frac{1}{C}n^{5/4})$.\\

Then, we have
  \ben\begin{split}
&\intr a' |q-\tilq| \Big(1+n \log^+ \frac{n}{\tiln}\Big){1}_{\{|\frac{n}{\tiln}-1|\geq\delta_1\}} d\xi\\
&\quad \le  \underbrace{\int_{|\xi|\le\frac{1}{\eps}\sqrt{\frac{\lambda}{\eps}}} a'  |q-\tilq|{\Pi}(n|\tiln)^{1/4} |n-\tiln| {1}_{\{|\frac{n}{\tiln}-1|\geq\delta_1\}} d\xi}_{=:K_1} \\
&\qquad+\underbrace{\int_{|\xi|\ge\frac{1}{\eps}\sqrt{\frac{\lambda}{\eps}}} a'|q-\tilq|{\Pi}(n|\tiln)^{1/4}|n-\tiln| {1}_{\{|\frac{n}{\tiln}-1|\geq\delta_1\}} d\xi}_{=:K_2}.
\end{split}\een
Using the same argument as in $J_1$ above, we have
\ben\begin{split}
K_1&\leq C\frac{1}{\eps}\sqrt{\frac{\lambda}{\eps}} \mathcal{D}(U)   \int_\bbr a' |q-\tilq|{\Pi}(n|\tiln)^{1/2} \,d\xi\\
& \leq C\frac{1}{\eps}\sqrt{\frac{\lambda}{\eps}} \mathcal{D}(U)  \Big( \intr a' |q-\tilq|^2 \,d\xi\Big)^{1/2} \Big( \intr a' \Pi(n|\tiln) \,d\xi\Big)^{1/2}\\
& \leq C\sqrt{\frac{\eps}{\lambda}} \mathcal{D}(U).
\end{split}\een
Since $|q-\tilq|{\Pi}(n|\tiln)^{1/4} \le C\eta(U|\tilde U)^{3/4}$, we have
\ben\begin{split}
K_2&\leq C\mathcal{D}(U)  \int_{|\xi|\geq\frac{1}{\eps}\sqrt{\frac{\lambda}{\eps}}} a' \eta(U|\tilde U)^{3/4} \left(|\xi|+\frac{1}{\eps}\right)\,d\xi\\
& \leq  C\mathcal{D}(U) \Big( \intr a'  \eta(U|\tilde U) \,d\xi\Big)^{3/4} \Big( \int_{|\xi|\geq\frac{1}{\eps}\sqrt{\frac{\lambda}{\eps}}} a' |\xi|^4 \,d\xi\Big)^{1/4}\\
& \leq  C \mathcal{D}(U) \Big( \frac{\eps^2}{\lambda}\Big)^{3/4} \Big( \frac{\lambda}{\eps^4} \Big)^{1/4} = C\sqrt{\frac{\eps}{\lambda}} \mathcal{D}(U).
\end{split}\een
 \end{proof}
   
\subsubsection{\bf Proof of \eqref{n1}}
We first use \eqref{Q_global_} and \eqref{est_B1} to have
\be\label{B1est}
\begin{split}
|\mathcal{B}_1(U)-\mathcal{B}_1(\bar U)|  &\le C \intr a' \Big|\Pi(n|\tiln)-\Pi(\bar n|\tiln) \Big| d\xi \\
&\le  C \intr a'  \Pi(n|\tiln){1}_{\{|\frac{n}{\tiln}-1|\geq\delta_1\}} d\xi\\
&\le  C \intr a' \Big(1+n\log^+ \frac{n}{\tiln}\Big) {1}_{\{|\frac{n}{\tiln}-1|\geq\delta_1\}} d\xi\\
&\leq C\sqrt{\el}\mathcal{D}(n).
\end{split}\ee

We use \eqref{upper_1/2}, \eqref{Q_global_} and \eqref{est_B3} to have
\ben\begin{split}
|\mathcal{B}_2^O (U)|&\le \int_{\Omega^c} a' {\Pi}(n|\tiln) |q-\tilq| d\xi + C\int_{\Omega^c} a' |n-\tiln| |q-\tilq| d\xi \\
&\le C\int_{\Omega^c} a' {\Pi}(n|\tiln) |q-\tilq| d\xi\\
&\le C \int_{\Omega^c} a' \Big(1+n\log^+ \frac{n}{\tiln}\Big) |q-\tilq| d\xi \leq C\sqrt{\el}\mathcal{D}(n).
\end{split}\een

We use Young's inequality to have
\ben\begin{split}
|\mathcal{B}_3(U)| &\le C {\intr}a' \sqrt n \Big(\log\frac{n}{\tiln}\Big) \sqrt n   \pa_\xi \Big(\log\frac{n}{\tiln}\Big) d\xi\\
 &\le \delta_0 \mathcal{D}(U) + \frac{C}{\delta_0} {\intr}|a'|^2  n \Big(\log\frac{n}{\tiln}\Big)^2 d\xi\\
 &\le \delta_0 \mathcal{D}(U) + \underbrace{\frac{C\eps\lambda}{\delta_0} {\intr} a'  n \Big(\log\frac{n}{\tiln}\Big)^2 d\xi}_{=:B_4(n)}.
 \end{split}\een  
We separate the remaining term $B_4(n)$ into
\[
|B_4(n)|\le |B_4(n)-B_4(\bar n)|+ |B_4(\bar n)|.
\] 
Since there exists a constant $C>0$ such that 
\[
n \Big(\log\frac{n}{\tiln}\Big)^2 {1}_{\{|\frac{n}{\tiln}-1|\geq\delta_1\}} \le C \Big[1+n\Big(\log^+ \frac{n}{\tiln}\Big)^2 \Big] {1}_{\{|\frac{n}{\tiln}-1|\geq\delta_1\}},
\]
we use \eqref{est_B2} to have
\[
 |B_4(n)-B_4(\bar n)| \le C\eps {\intr} a'   \Big[1+n\Big(\log^+ \frac{n}{\tiln}\Big)^2 \Big] {1}_{\{|\frac{n}{\tiln}-1|\geq\delta_1\}} \le C\sqrt{\el}\mathcal{D}(n).
\]
Since there exists a constant $C>0$ such that 
$$|\log y|\leq C|y-1|\quad \mbox{for any $y$ satisfying $|y-1|\leq \delta_1$},$$ 
using $ \bar n\leq (1+\delta_1)\tiln \leq C$ and \eqref{Q_loc_}, we have
\[
|B_4(\bar n)| \le \frac{C\eps\lambda}{\delta_0} {\intr} a'  \Big(\frac{\bar n}{\tiln}-1\Big)^2 d\xi \le  \frac{C\eps\lambda}{\delta_0} {\intr} a'  \Pi(\bar n|\tiln) d\xi.
\]
Using $\eps<\delta_0(\eps/\lambda)$, we have
\[
|B_4(\bar n)| \le C\delta_0  \frac{\eps}{\lambda} \mathcal{G}_2(\bar U).
\]
Therefore, by taking $\delta_0$ small enough, we get
\[
|\mathcal{B}_3 (U)| \le
\delta_0^{1/3} 
 \mathcal{D}(n) + C\delta_0  \frac{\eps}{\lambda} \mathcal{G}_2(\bar U).
\]

\subsubsection{\bf Proof of \eqref{n2}}
Using \eqref{eq_G} and \eqref{eta_small_}, we have
\[
|\mathcal{B}_1(\bar U)|\le  C{\intr} a'  \Pi(\bar n|\tiln) d\xi\le C  {\intr} a'  \Pi(n|\tiln) d\xi\le C\frac{\eps^2}{\lambda}.
\]
Since $|\bar n| \le C$, using \eqref{Q_loc_}, \eqref{eq_G} and \eqref{eta_small_}, we have
\begin{align}
\begin{aligned}\label{B2est}
|\mathcal{B}_2^I(U)|&\le C\int_\Omega a' \Big(\Pi(\bar n|\tiln) +\left(1+\frac{\eps}{\lambda} \frac{a}{\tiln} \right)(\bar n-\tiln) \Big)^2 d\xi \\
&\le C\int_\Omega a' \Big(\Pi(\bar n|\tiln)^2 + |\bar n-\tiln|^2 \Big) d\xi\\
 &\le C\int_\Omega a' \Pi(\bar n|\tiln) d\xi  \le C\frac{\eps^2}{\lambda}.
\end{aligned}
\end{align}
Hence, combining these estimates with \eqref{n1}, using \eqref{eta_small_}, and taking $\delta_0$ small enough, there exists $C^*>0$ such that
\[
|\mathcal{B}(U)| \le C^* \frac{\eps^2}{\lambda}+
\delta_0^{1/4} 
  \mathcal{D}(U).
\]

\subsubsection{\bf Proof of \eqref{m1}} We split the proof in two steps.
\vskip0.2cm
\noindent{\it Step 1:} 
We use the good term $\mathcal{G}_1^{I}$ defined in \eqref{ggd} and \eqref{B2est} to have
\begin{align*}
\begin{aligned}
|\mathcal{Y}_b(U)| &\le C \mathcal{G}_1^{I}(U) + C \int_\Omega a' |\varphi(n)|^2 d\xi \\
&\le C \mathcal{G}_1^{I}(U) + C |\mathcal{B}_2^I(U)|\\
&\le C\Big( \mathcal{G}_1^{I}(U) + \mathcal{G}_2(\bar U) \Big).
\end{aligned}
\end{align*}
In particular, since
\[
\mathcal{G}_1^{I}(U) \le C \int_{\Omega} a' \Big(\frac{|q-\tilq|^2}{2} + |\varphi(n)|^2\Big)  d\xi \le C \int_{\Omega} a' \frac{|q-\tilq|^2}{2}  d\xi +C  \mathcal{G}_2(\bar U),
\]
we use \eqref{eta_small_} to have
\beq\label{Ybb}
|\mathcal{Y}_b(U)| \le C \frac{\eps^2}{\lambda}.
\eeq

We use the notations $Y_1^s, Y_2^s, Y_3^s$ and $Y_4^s$ for the terms of $\mathcal{Y}_s$ as follows:
\[
\mathcal{Y}_s =
\underbrace{-\int_{\Omega^c} a' {\Pi}(n|\tiln) d\xi}_{=:Y_1^s} \underbrace{-\frac{\vep}{\lambda}\int_{\Omega^c}aa' \frac{n-\tiln}{\tiln} d\xi}_{=:Y_2^s} \underbrace{-\int_{\Omega^c} a' \frac{|q-\tilq|^2}{2} d\xi}_{=:Y_3^s} \underbrace{+\frac{\vep}{\lambda}\int_{\Omega^c}aa' \frac{q-\tilq}{\s} d\xi}_{=:Y_4^s}.
\]
Using \eqref{B1est}, we have
\[
|Y_1^s(U)| = \intr a'  \Pi(n|\tiln){1}_{\{|\frac{n}{\tiln}-1|\geq\delta_1\}} d\xi \leq C\sqrt{\el}\mathcal{D}(n).
\]
Using \eqref{upper_1/2}, we have
\[
|Y_2^s(U)| \le C\frac{\vep}{\lambda} \int_{\Omega^c} a' |n-\tiln| d\xi \le C\intr a'  \Pi(n|\tiln){1}_{\{|\frac{n}{\tiln}-1|\geq\delta_1\}} d\xi \leq C\sqrt{\el}\mathcal{D}(n).
\]
We use $\mathcal{G}_1^{O}$ defined in \eqref{ggd} to control
\[
|Y_3^s(U)|  \le C\mathcal{G}_1^O(U).
\]
Therefore, we have
\beq\label{Ys123}
|Y_1^s(U)|+|Y_2^s(U)|+|Y_3^s(U)| \le C\sqrt{\el}\mathcal{D}(n) + C\mathcal{G}_1^O(U).
\eeq
Using \eqref{eta_small_} together with the assumption $|\mathcal{D}(U)|\leq C\frac{\eps^2}{\lambda}$, we have
\beq\label{Ysb}
|Y_1^s(U)|+|Y_2^s(U)|+|Y_3^s(U)|  \le C \frac{\eps^2}{\lambda}.
\eeq

\noindent{\it Step 2:}
First of all, using Young's inequality and \eqref{B2est}, we estimate
\begin{align*}
\begin{aligned}
\left|  \int_\Omega a' \Big(q-\tilq +\varphi(n) \Big) \varphi(n) d\xi \right|&\le\left(\frac{\lambda}{\eps}\right)^{1/4}\mathcal{G}_1^I(U)+C\left(\frac{\eps}{\lambda}\right)^{1/4}\int_\Omega a' |\varphi(n)|^2\,d\xi\\
& \le\left(\frac{\lambda}{\eps}\right)^{1/4}\mathcal{G}_1^I(U)+C\left(\frac{\eps}{\lambda}\right)^{1/4} \mathcal{G}_2(\bar U).
\end{aligned}
\end{align*}
Then we have
\[
|\mathcal{Y}_b(U)|\le C\left(\frac{\lambda}{\eps}\right)^{1/4}\mathcal{G}_1^I(U)+C\left(\frac{\eps}{\lambda}\right)^{1/4} \mathcal{G}_2(\bar U).
\]
Therefore, this together with \eqref{Ys123}, \eqref{Ybb} and \eqref{Ysb} implies
\begin{align*}
\begin{aligned}
&|\mathcal{Y}_b(U)|^2 + |Y_1^s(U)|^2+|Y_2^s(U)|^2+|Y_3^s(U)|^2\\
&\quad\le C\frac{\eps^2}{\lambda}\left(\sqrt{\frac{\eps}{\lambda}}\mathcal{D}(U)+\mathcal{G}_1^O(U) + \left(\frac{\lambda}{\eps}\right)^{1/4}\mathcal{G}_1^I(U) +\left(\frac{\eps}{\lambda}\right)^{1/4}\mathcal{G}_2(\bar U) \right).
\end{aligned}
\end{align*}
We use H\"older's inequality to have
\begin{align*}
\begin{aligned}
&|Y_4^s(U)|^2\leq C\left(\frac{\eps}{\lambda}\right)^2 \left(\int_\bbr|a'|\,d\xi\right)\int_{\Omega^c} a' |q-\tilq|^2\,d\xi \leq C\frac{\eps^2}{\lambda} \mathcal{G}_1^O(U),\\
&|Y_l(U)|^2\leq C\left(\frac{\eps}{\lambda}\right)^2 \left(\int_\bbr|a'|\,d\xi\right)\int_{\Omega} a' \left(q-\tilq +\varphi(n) \right)^2\,d\xi \leq C\frac{\eps^2}{\lambda} \mathcal{G}_1^I(U).
\end{aligned}
\end{align*}
Hence we have \eqref{m1}

\subsection{Conclusion}
We are now ready to complete the proof of Proposition \ref{main_prop}. We split the proof into two steps, depending on the strength of the dissipation term $\mathcal{D}(U)$. 
\vskip0.2cm
\noindent{\it Step 1:}  
We first consider the case of
$
\mathcal{D}(U)\geq 4 C^* \frac{\eps^2}{\lambda}, $  where the constant $C^*$ is defined as in Proposition \ref{prop_out}.
Then using $\eqref{n2}$ and taking $\delta_0$ small enough, we have
\begin{align*}
\begin{aligned}
\mathcal{R}(U)&:= -\frac{1}{\varepsilon^4}|\mathcal{Y}(U)|^2 + \mathcal{B}(U)+\delta_0\frac{\eps}{\lambda} |\mathcal{B}(U)|-\mathcal{G}_1^I(U)- \mathcal{G}_1^O(U)- \mathcal{G}_2(U)
-(1-\delta_0)\mathcal{D}(U)\\
&\leq 2|\mathcal{B}(U)|-(1-\delta_0) \mathcal{D}(U)\\
& \leq 2C^*\frac{\eps^2}{\lambda}-\left(1-\delta_0
-2\delta_0^{1/4}
\right)\mathcal{D}(U)\\
& \leq 2 C^* \frac{\eps^2}{\lambda}-\frac{1}{2}\mathcal{D}(U)\leq 0,
\end{aligned}
\end{align*}
which gives the desired result.

\vskip0.2cm
\noindent{\it Step 2:}  We now assume the other alternative, \textit{i.e.}, $\mathcal{D}(U)\leq 4 C^* \frac{\eps^2}{\lambda}.$ \\
We will use Proposition \ref{prop_near} to get the desired result.  First of all, we recall the constant $K$ satisfying \eqref{YC2} 
and the fixed small constant $\delta_1$ of Proposition \ref{prop_near} associated to the constant $K$.

Since $\mathcal{Y}_g(U)=\mathcal{Y}_g(\bar U)$, we have
$$
\mathcal{Y}_g(\bar U)=\mathcal{Y}(U)-\mathcal{Y}_b(U)-\mathcal{Y}_l(U)-\mathcal{Y}_s(U).
$$
Thus we have
$$
|\mathcal{Y}_g(\bar U)|^2\leq 4\left(|\mathcal{Y}(U)|^2+ |\mathcal{Y}_b(U)|^2+|\mathcal{Y}_l(U)|^2+|\mathcal{Y}_s(U)|^2\right),
$$
which can be written as
$$
-4|\mathcal{Y}(U)|^2\leq -|\mathcal{Y}_g(\bar U)|^2+ 4|\mathcal{Y}_b(U)|^2+4|\mathcal{Y}_l(U)|^2+4|\mathcal{Y}_s(U)|^2.
$$
Below, we will take $\delta_0$ small enough compared to the fixed constant $\delta_1$ (\textit{e.g.}  $\delta_0\le C\delta_1^9$). 
Then, using the facts that $\mathcal{B}_2^I(U)=\mathcal{B}_2^I(\bar U)$, $\mathcal{G}_2(\bar U)\le \mathcal{G}_2(U)$ and $\mathcal{D}(\bar U)\le \mathcal{D}(U)$, we find that for sufficiently small $\delta_0$ 
and for any $\delta_0^{-1}\eps<\lambda<\delta_0$,
\begin{align*}
\begin{aligned}
\mathcal{R}(U)&\le-\frac{4|\mathcal{Y}(U)|^2}{\eps\delta_1}+ \mathcal{B}(U)+\delta_0\frac{\eps}{\lambda} |\mathcal{B}(U)|-\mathcal{G}_1^I(U)- \mathcal{G}_1^O(U)- \mathcal{G}_2(U)
-(1-\delta_0)\mathcal{D}(U)\\
&\leq -\frac{|\mathcal{Y}_g(\bar U)|^2}{\eps\delta_1}+\left(\mathcal{B}_1(\bar U)+\mathcal{B}_2^I(\bar U)\right)+\delta_0\frac{\eps}{\lambda}\left(|\mathcal{B}_1(\bar U)|+|\mathcal{B}_2^I(\bar U)|\right)\\
&\quad -\left(1-\delta_1\frac{\eps}{\lambda}\right)\mathcal{G}_2(\bar U)-(1-\delta_1)\mathcal{D}(\bar U)\\
&\quad \underbrace{+\frac{4}{\eps\delta_1}\left(|\mathcal{Y}_b(U)|^2+|\mathcal{Y}_l(U)|^2+|\mathcal{Y}_s(U)|^2\right)}_{=:J_1} \\
&\quad \underbrace{+\left(1+\delta_0\frac{\eps}{\lambda}\right)\left(|\mathcal{B}_1(U)-\mathcal{B}_1(\bar U)|+|\mathcal{B}_2^O(U)| +|\mathcal{B}_3(U)|  \right)}_{=:J_2} \\
&\quad  \underbrace{-\mathcal{G}_1^I(U)-\mathcal{G}_1^O(U)-\delta_1\frac{\eps}{\lambda}\mathcal{G}_2(\bar U) -(\delta_1-\delta_0) \mathcal{D}(U)}_{=:J_3}.
\end{aligned}
\end{align*}
We claim that $J_1+J_2+J_3\le 0 $ for sufficiently small $\delta_0>0$. 
Indeed, it follows from \eqref{n1} and \eqref{m1} that for sufficiently small $\delta_0$ 
 and  for any $\eps/\lambda<\delta_0
 $, we have 
\begin{align*}
\begin{aligned}
J_1&\le \frac{C}{\delta_1}\frac{\eps}{\lambda}\left(\sqrt{\frac{\eps}{\lambda}}\mathcal{D}(U)+\mathcal{G}_1^O(U) + \left(\frac{\lambda}{\eps}\right)^{1/4}\mathcal{G}_1^I(U) +\left(\frac{\eps}{\lambda}\right)^{1/4}\mathcal{G}_2(\bar U) \right)\\
&\le \frac{C}{\delta_1}\left(\frac{\eps}{\lambda}\right)^{1/4} \left(\mathcal{D}(U)+\mathcal{G}_1^O(U) + \mathcal{G}_1^I(U) +\frac{\eps}{\lambda}\mathcal{G}_2(\bar U) \right)\\
&\le \frac{1}{4}\delta_1 \left(\mathcal{D}(U)+\mathcal{G}_1^O(U) + \mathcal{G}_1^I(U) +\frac{\eps}{\lambda}\mathcal{G}_2(\bar U) \right)  
\end{aligned}
\end{align*} and
\begin{align*}
\begin{aligned}
J_2&\le C 
\delta_0^{1/3}
 \left( \mathcal{D}(U) +\frac{\eps}{\lambda} \mathcal{G}_2(\bar U)\right) \le \frac{1}{4}\delta_1\left( \mathcal{D}(U) +\frac{\eps}{\lambda} \mathcal{G}_2(\bar U)\right).
\end{aligned}
\end{align*}
Therefore, if $\delta_0>0$ is small enough, then we have $J_1+J_2+J_3\le 0 $. Thus we get  
\begin{align*}
\begin{aligned}
\mathcal{R}(U)&\leq -\frac{|\mathcal{Y}_g(\bar U)|^2}{\eps\delta_1}+\left(\mathcal{B}_1(\bar U)+\mathcal{B}_2^I(\bar U)\right)+\delta_1\frac{\eps}{\lambda}\left(|\mathcal{B}_1(\bar U)|+|\mathcal{B}_2^I(\bar U)|\right)\\
&\quad -\left(1-\delta_1\frac{\eps}{\lambda}\right)\mathcal{G}_2(\bar U)-(1-\delta_1)\mathcal{D}(\bar U).
\end{aligned}
\end{align*}
Since the above quantities $\mathcal{Y}_g(\bar U), \mathcal{B}_1(\bar U), \mathcal{B}_2^I(\bar U), \mathcal{G}_2(\bar U)$ and $\mathcal{D}(\bar U)$ depends only on $\bar n$ through $\bar U$ and
we have $\mathcal{B}_1(\bar{U})=\mathcal{I}_1(\bar{n})$ and $0\leq \mathcal{B}_2^I(\bar{U})\le\mathcal{I}_2(\bar{n})$, it follows from Proposition \ref{prop_near} that $\mathcal{R}(U)\le 0$ (or see \eqref{near_ineq}). \\
Hence we complete the proof of Proposition \ref{main_prop}.\\

\begin{appendix}
\setcounter{equation}{0}

\section{Existence of the shift} \label{appendix_shift}
In this subsection, we present the existence of the shift satisfying \eqref{def_shift}
in Subsection \ref{subsec_existence_shift}. For a fixed $\vep>0$ and for a given solution $U\in\mathcal{X}_T$, we define 
$F:[0,T]\times\bbr\rightarrow\bbr$ by
$$F(t,X)= \Phi_\eps (\mathcal{Y}(U^X)) \Big(2|\mathcal{I}^{bad}(U^X)|+1 \Big)$$
where  $\Phi_\eps$ is as in \eqref{def_Phi} and  $\mathcal{Y}$ and $\mathcal{I}^{bad}$ are as in \eqref{badgood}.\\

We observe that  $\Phi_\eps, a$, $\tiln$ and $(1/\tiln)$ are bounded, $ \tilde n', \tilde n'', \tilde q',$ and $a'$ are   bounded and integrable. Together with  the information from  $U\in\mathcal{X}_T$,   
  we get  $\mathcal{D}(U)\in L^1(0,T)$ where
  $\mathcal{D}$ is defined in \eqref{d_only_a}. From these information, we can show
 \be\label{bdd_F} |F(t,x)|\leq C(1+\sqrt{\mathcal{D}(U^x)(t)})\quad\mbox{for} \quad t\in[0,T]\quad \mbox{and for } x\in\bbr.\ee Since we have $\sup_{x \in\bbr  }\mathcal{D}(U^x)(t)\leq C(\mathcal{D}(U)(t)+1)$ for each $t\in[0,T]$ and $\mathcal{D}(U)\in L^1(0,T)$, we can estimate $$\sup_{x\in\bbr }|F(t,x)|\leq a(t)$$ for some $a\in L^2(0,T)$.
 
 Similarly,  we can prove $$ 
\sup_{x \in\bbr  }|(D_xF)(t,x)|\leq b(t)
\quad \mbox{for } t\in[0,T] $$ for some function $ b\in L^2(0,T)$. Indeed, we can use the same idea as in \eqref{move-X} in order to move the translation symbol $(\cdot)^X$ from $U$ into smooth functions such as $a,\tilu$ and so on. It enables us to differentiate $F(t,x)$ with respect to $x$ without requiring any higher regularity of $U$. Then we can get a similar control for $|(D_xF)
|$ as in \eqref{bdd_F}. \\

 Then we can use the following lemma which is a simple adaptation of the well-known Cauchy-Lipschitz theorem.
 
 \begin{lemma}\label{lem_cauchy_lip}
 Let $p>1$ and $T>0$. Suppose that a function 
 $F:[0,T]\times\bbr\rightarrow\bbr$  satisfies 
 $$\sup_{x\in\bbr }|F(t,x)|\leq a(t) \quad\mbox{and}\quad
\sup_{x,y\in\bbr,x\neq y }\Big|\frac{F(t,x)-F(t,y)}{x-y}\Big|\leq b(t)
\quad \mbox{for } t\in[0,T] $$ for some functions $a \in L^1(0,T)$ and $\, b\in L^p(0,T)$. Then for any $x_0\in\bbr$, there exists a unique absolutely continuous function $X:[0,T]\rightarrow \bbr$ satisfying
\be\label{ode_eq}\left\{ \begin{array}{ll}
        \dot X(t) = F(t,X(t))\quad\mbox{for \textit{a.e.} }t\in[0,T],\\
       X(0)=x_0\end{array} \right.\ee
 \end{lemma}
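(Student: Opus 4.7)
The plan is to reformulate the ODE \eqref{ode_eq} as the fixed-point problem
\[
X(t) = x_0 + \int_0^t F(s, X(s))\,ds
\]
and apply the Banach contraction principle on a suitable complete metric space. First I would note that on the bounded interval $[0,T]$, H\"older's inequality gives $b \in L^1(0,T)$ (since $p>1$), so the nondecreasing function $B(t):=\int_0^t b(s)\,ds$ is absolutely continuous and bounded by some $M<\infty$. The $L^1$ bound $|F(t,x)|\le a(t)$ then ensures that the operator
\[
(\mathcal{T}X)(t):=x_0+\int_0^t F(s,X(s))\,ds
\]
sends $C([0,T];\bbr)$ into itself (and in fact into the absolutely continuous functions with $X(0)=x_0$).

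Next I would equip $C([0,T];\bbr)$ with the weighted supremum norm
\[
\|X\|_\lambda := \sup_{t\in[0,T]} e^{-\lambda B(t)}|X(t)|,
\]
where $\lambda>1$ will be chosen momentarily. Using the Lipschitz hypothesis $|F(s,x)-F(s,y)|\le b(s)|x-y|$ and the identity $\frac{d}{ds}e^{\lambda B(s)} = \lambda b(s) e^{\lambda B(s)}$ a.e., one computes
\[
e^{-\lambda B(t)}|(\mathcal{T}X-\mathcal{T}Y)(t)| \le e^{-\lambda B(t)}\int_0^t b(s)\,e^{\lambda B(s)}\,e^{-\lambda B(s)}|X(s)-Y(s)|\,ds \le \frac{1-e^{-\lambda B(t)}}{\lambda}\,\|X-Y\|_\lambda \le \frac{1}{\lambda}\|X-Y\|_\lambda.
\]
For any $\lambda>1$ this is a strict contraction on the complete space $(C([0,T];\bbr),\|\cdot\|_\lambda)$ (the weighted norm is equivalent to the usual sup-norm since $0\le B(t)\le M$). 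Banach's theorem then produces a unique continuous fixed point $X$, and the integral representation shows $X$ is absolutely continuous with $\dot X(t)=F(t,X(t))$ for \textit{a.e.}\ $t\in[0,T]$.

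For the uniqueness statement in the class of absolutely continuous solutions of \eqref{ode_eq}, I would argue directly: if $X$ and $Y$ are two such solutions with $X(0)=Y(0)=x_0$, then $|X(t)-Y(t)|\le \int_0^t b(s)|X(s)-Y(s)|\,ds$, and Gr\"onwall's inequality (applicable since $b\in L^1$) forces $X\equiv Y$. I do not anticipate any serious obstacle: the only slightly nonstandard feature compared with the textbook Cauchy-Lipschitz theorem is that $a$ and $b$ are merely integrable rather than bounded, but the weighted-norm trick handles this uniformly once $L^p\subset L^1$ on the bounded interval has been observed.
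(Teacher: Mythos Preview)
Your proof is correct, but it takes a different route from the paper. The paper follows the classical Picard iteration scheme: one chooses a small time $t_*$ so that $\|b\|_{L^p(0,T)}\cdot t_*^{1-1/p}\le \tfrac12$ (this is where $p>1$ enters, via H\"older), shows that the iterates form a Cauchy sequence in $C([0,t_*])$, and then repeats the argument on $[t_*,2t_*]$, etc., to reach $T$. You instead use the Bielecki weighted-norm trick to get a global contraction on $C([0,T])$ in one shot. Your route is slightly slicker because it avoids the local-to-global patching, and in fact your argument uses only $b\in L^1(0,T)$, so it reveals that the hypothesis $p>1$ is not really needed; the paper's version, with its uniform step length $t_*$, genuinely uses $p>1$ as stated.
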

 \begin{proof}
 First we note that \eqref{ode_eq} is equivalent to
\be\label{ode_equiv}X(t)=x_0+\int_0^t F(s,X(s))ds \quad\mbox{for  }t\in[0,T].\ee
 Then, the proof follows the classical   Picard's iteration argument:
 \ben\label{picard}\left\{ \begin{array}{ll}
 x_0(t)=x_0,\\
 x_{n+1}(t)=x_0+\int_0^t F(s,x_n(s))ds\quad \mbox{for}\quad n\geq 0\end{array} \right.\een Indeed, we observe that $a\in L^1$ makes the iteration possible. In particular,  $x_n:[0,T]\rightarrow \bbr$ is continuous and it satisfies $$\|x_n-x_0\|_{L^\infty(0,T)}\leq \|a\|_{L^1(0,T)}\quad \mbox{ for each } n.$$ 
Thanks to $b\in L^p$ with $p>1$, we take $t_*>0$ such that  $\|b\|_{L^p(0,T)}\cdot (t_*)^{1-(1/p)}\leq \frac{1}{2}$ and $t_*\leq T$. Then  we get, for each $n\geq1$,
   \begin{align*}
\begin{aligned}\label{lip_comp}
\|x_{n+1} -x_n\|_{L^\infty(0,t_*)}&\leq 
\int_0^{t_*}|F(s,x_n(s))-F(s,x_{n-1}(s))|ds 
 \leq  \|x_{n} -x_{n-1}\|_{L^\infty(0,t_*)}\cdot\int_0^{t_*} b(s)ds\\ & 
\leq  \|x_{n}-x_{n-1}\|_{L^\infty(0,t_*)}\cdot \|b\|_{L^p(0,T)}\cdot (t_*)^{1-(1/p)}\leq  \frac{1}{2}\cdot \|x_{n}-x_{n-1}\|_{L^\infty(0,t_*)}.
\end{aligned}
\end{align*} Thus we obtain
$\|x_{n+1} -x_n\|_{L^\infty(0,t_*)}\leq 2^{-n}\|a\|_{L^1(0,T)}$ so that
the uniform limiting function $X:[0,t_*]\rightarrow\bbr$ of the sequence $\{x_n:[0,t_*]\rightarrow \bbr\}_{n=1}^{\infty}$ exists and it satisfies \eqref{ode_equiv} for every $t\in[0,t_*]$. If $t_*<T$, then we just do  the process again with new data $X(t_*)$ in order to obtain $X$ on $[t_*,2t_*]$. Since we can repeat as many times as we want, we get $X$   up to the given time $T$. Similarly, uniqueness follows the assumption $p>1$.

 \end{proof}

\end{appendix}
  

\bibliography{ckkv2019bib}
 
\end{document}